\newtheorem{lemma}{Lemma}[section]
\newtheorem{corollary}[lemma]{Corollary}
\newtheorem*{mainth}{Main Theorem}
\theoremstyle{definition}
\newcommand{\abs}[1]{\ensuremath{\left| #1 \right|}}
\newcommand{\op}{\operatorname}
\newcommand{\ce}[2]{\op{C}_{#1}(#2)}
\newcommand{\ze}[1]{\op{\mathcal{Z}}(#1)}
\newcommand{\fra}[1]{\op{\Phi}(#1)}
\newcommand{\rad}[2]{\op{O}_{#1}(#2)}
\newcommand{\syl}[2]{\op{Syl}_{#1}\left(#2\right)}
\newcommand{\hall}[2]{\op{Hall}_{#1}\left(#2\right)}
\newcommand{\aut}[1]{\op{\textup{Aut}}({#1})}
\newcommand{\out}[1]{\op{\textup{Out}}({#1})}
\title{On products of groups and indices not divisible by a given prime}
\author{Mar\'{\i}a Jos\'e Felipe\footnote{Instituto Universitario de Matem\'atica Pura y Aplicada (IUMPA), Universitat Polit\`ecnica de Val\`encia, Camino de Vera, s/n, 46022, Valencia, Spain.}, Lev S. Kazarin\footnote{Department of Mathematics, Yaroslavl P. Demidov State University,  Sovetskaya Str 14, 150014 Yaroslavl, Russia.\newline 
mfelipe@mat.upv.es, Kazarin@uniyar.ac.ru,  anamarti@mat.upv.es, vicorso@doctor.upv.es},\\ Ana Mart\'inez-Pastor\footnotemark[1], and V\'ictor Sotomayor\footnotemark[1]} 
\date{\textit{Dedicated to the memory of Carlo Casolo}}
\begin{document}

\maketitle
\begin{abstract}
\noindent Let the group $G = AB$ be the product of subgroups $A$ and $B$, and let $p$ be a prime. We prove that  $p$ does not divide the conjugacy class size (index) of each $p$-regular element of prime power order $x\in A\cup B$ if and only if $G$ is $p$-decomposable, i.e. $G=O_p(G) \times O_{p'}(G)$. 
\end{abstract}

\section{Introduction and statement of results}

All groups considered in this paper are finite. In recent years a new research line its being  developed in the confluence of two well-established areas of study in group theory. On the one hand the theory of products of groups, and on the other hand the study of the influence of conjugacy class sizes, also called  \emph{indices}, on the structure of finite groups.  The present paper is a  contribution to this current development. 
\smallskip

Regarding products of groups, the main objective is to obtain information about the structure of a factorised group from the one of the subgroups in the factorization (and vice versa). In this setting, the fact that a product of two normal supersoluble groups is not necessarily supersoluble, has led  to the approach of assuming certain permutability relations between the factors involved (see \cite{BEA} for a detailed account). In particular, among others, \emph{mutually permutable products} have been considered. These are factorised groups $G = AB$ such that $A $ permutes with every subgroup of $B$ and $B$ permutes with every subgroup of $A$. 

\smallskip
Besides, during the last decades, several authors have carried out in-depth investigations with the purpose of understanding how the structure of a finite group is affected by the indices of its elements. In particular, it has been examined whether the indices of some subsets of elements are enough in order to provide features of the group. The survey \cite{CC} gives a general overview about this subject until 2011. 

\smallskip
In the recent approach, which combines the above mentioned lines within the theory of groups, the main aim is to analyze how the  indices of some elements in the factors of a factorised group influence the structure of the whole group. Most of the contributions in this framework consider additional hypotheses on the subgroups in the factorization. Some of them (\cite{LWW, ZGS}) impose some (sub)normality conditions on either both factors. Other papers consider mutually permutable products (see \cite{BCL, CL, FMOsquare}). Recent work done by some of the authors (\cite{FMOvan, FMOpi}) extends previous developements by considering some special type of factorisations, the so-called \emph{core-factorisations}. Only \cite{FMOprime} treats  prime power indices without considering any additional restriction on the factors. On the other hand, it is to be said that in most cases the conditions on the indices are imposed only on some subsets of elements of the factors, namely prime power order elements, $p$-regular elements, zeros of irreducible characters, etc.

\smallskip
The notation and terminology are as follows. For an element $x$ in a group $G$, we call  $i_G(x)$ the \emph{index of $x$}, i.e $i_G(x)=|G:\ce{G}{x}|$. A \emph{$p$-regular} element is an element whose order is not divisible by $p$, where $p$ will always be a prime number. If $n$ is a positive integer, $n_p$ denotes the highest power of $p$ dividing $n$. We represent by  $\pi(G)$ the set of all prime divisors of $|G|$. The set of all Sylow $p$-subgroups of $G$ is $\syl{p}{G}$ and $\hall{\pi}{G}$ is the set of all Hall $\pi$-subgroups of $G$ for a set of primes $\pi$. A group such that $G=\rad{\pi}{G}\times \rad{\pi'}{G}$ is said to be \emph{$\pi$-decomposable}. If $H$ is a subgroup of $G$, we denote by $H^G$ the normal closure of $H$ in $G$. The remaining notation and terminology are standard within the theory of finite groups, and they mainly follow those of the book \cite{DH}, apart from some terminology on simple groups which will be introduced later. 

\smallskip
It is well known that  if $p$ does not divide  $i_G(x) $ for every $p$-regular element in a group $G$, then the Sylow $p$-subgroup is a direct factor of $G$ (see for instance \cite[Lemma 2]{CC}). This result was improved in \cite[Theorem 5]{LWW} by proving that the same conclusion remains true if the conditions on the indices are only imposed on $p$-regular elements of prime power order.  
In this paper, we deal with the corresponding result for factorised groups, but avoiding the consideration of any additional conditions on the factors, as were considered in \cite{BCL, FMOpi, ZGS}. This means that, in contrast to some of the mentioned results whose proofs are elementary,  the classification of finite simple groups (CFSG) has been used in our proof. In particular, we derive some results on the center of the prime graph of an almost simple group, which will be used as a tool. 
\smallskip

The aim of this paper is then to prove the following result:
\begin{mainth}\label{mainth} Let the group $G = AB$ be the product of subgroups $A$ and $B$, and let $p$ be a prime. Then $p$ does not divide $i_G(x)$ for every $p$-regular element of prime power order $x\in A\cup B$ if and only if $G$ is $p$-decomposable, i.e. $G=O_p(G) \times O_{p'}(G)$.

\end{mainth}

Note that if $G$ is $p$-decomposable, then clearly the conditions on the indices hold.  For the converse, the following lemma shows that only the existence of a unique Sylow $p$-subgroup should be proved.


\begin{lemma}\label{pclos}
Let the group $G = AB$ be the product of the subgroups $A$ and $B$, and let $p$ be a prime. If $p$ does not divide  $i_G(x)$ for every $p$-regular element of prime power order $x\in A\cup B$, then the following statements are equivalent:
\begin{itemize}
\item[i)] $G$ is $p$-closed, i.e. $G$ has a normal Sylow $p$-subgroup.
\item[ii)] $G$ is $p$-decomposable.
\end{itemize}
\end{lemma}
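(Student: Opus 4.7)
The implication $(ii)\Rightarrow(i)$ is immediate. For the converse, the plan is to assume that $G$ is $p$-closed, write $P=\rad{p}{G}$ for the unique Sylow $p$-subgroup of $G$, and use the Schur--Zassenhaus theorem to fix a Hall $p'$-subgroup $Q$ of $G$, so that $G=PQ$. It will suffice to show that $Q\le \ce{G}{P}$: once this is known, $Q$ is normal in $G$ and $G=P\times Q$ is $p$-decomposable, with $Q=\rad{p'}{G}$.

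The index hypothesis admits a clean reformulation in this setting. Since $P$ is the unique Sylow $p$-subgroup of $G$, the condition $p\nmid i_G(x)$ is equivalent to $P\le \ce{G}{x}$, i.e.\ $x\in \ce{G}{P}$. Hence the assumption says precisely that every $p$-regular element of prime power order in $A\cup B$ lies in $\ce{G}{P}$. Because $\ce{G}{P}\trianglelefteq G$, to show $Q\le \ce{G}{P}$ it is enough to produce, for each prime $q\in\pi(Q)$, a single Sylow $q$-subgroup of $G$ contained in $\ce{G}{P}$: conjugacy then gives the inclusion for every Sylow $q$-subgroup of $G$, and since $Q$ is generated by its Sylow subgroups the desired conclusion follows.

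The key tool for this final step will be the classical Wielandt--Kegel factorisation of Sylow subgroups in a product (see, for instance, the monograph of Amberg, Franciosi and de Giovanni on products of groups): for every prime $q$ there exist $A_q\in\syl{q}{A}$ and $B_q\in\syl{q}{B}$ such that $A_qB_q$ is a Sylow $q$-subgroup of $G$. For $q\ne p$, every element of $A_q\cup B_q$ is a $p$-regular element of prime power order lying in $A\cup B$, and hence belongs to $\ce{G}{P}$ by hypothesis; therefore $A_qB_q\subseteq \ce{G}{P}$, which supplies the required Sylow $q$-subgroup of $G$ inside the centraliser.

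The argument is fairly short, and its only nonelementary ingredient is the Sylow-factorisation theorem for products, for which a precise reference should be cited. The main step to be careful with is the passage from the index hypothesis on prime-power-order elements of $A\cup B$ to the centralisation of \emph{all} of $Q$ on $P$: everything hinges on the two facts that $P$ is normal (turning the index condition into membership in $\ce{G}{P}$) and that Sylow subgroups of $G$ can be built out of Sylow subgroups of $A$ and $B$.
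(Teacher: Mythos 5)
Your proof is correct and follows essentially the same route as the paper: both reduce the index hypothesis to the statement that every $p$-regular prime-power-order element of $A\cup B$ lies in $\ce{G}{P}$ for the normal Sylow $p$-subgroup $P$, and then invoke the factorisation of Hall/Sylow subgroups of a product through the factors (the paper's Lemma~\ref{1.3.2}, which in the single-prime case is exactly the Wielandt--Kegel result you cite). The only cosmetic difference is that the paper applies that lemma once to a prefactorised Hall $p'$-subgroup $H=(H\cap A)(H\cap B)$, using that $G$ is $p$-separable, whereas you apply it prime by prime to Sylow $q$-subgroups and then assemble the Hall $p'$-subgroup from them.
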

\begin{proof}
Clearly, it is enough to prove that (i) implies (ii). Let $P\in\syl{p}{G}$  and assume that $P \unlhd G$. Since $p$ does not divide  $i_G(x)=|G : \ce{G}{x}|$ it follows that $P\leq  \ce{G}{x}$ for every $p$-regular element of prime power order $x\in A\cup B$. Since $G$ is $p$-separable, by Lemma~\ref{1.3.2}, we may consider $H$ a Hall $p'$-subgroup of $G$ such that $H= (H \cap A)(H \cap B)$. Hence, for every element $x\in (H \cap A) \cup (H \cap B)$ of prime power order, it holds that $P\leq  \ce{G}{x}$. Therefore  $[P, H]=1$ and (ii) follows. 
\end{proof}

\smallskip
As an inmediate consequence of the Main Theorem, we get: 
\begin{corollary}\label{all} Let the group $G = AB$ be the product of subgroups $A$ and $B$, and let $p$ be a prime. Then $p$ does not divide $i_G(x) $ for every element of prime power order $x\in A\cup B$ if and only if  $G$ has a central Sylow $p$-subgroup, i.e. $G=O_p(G) \times O_{p'}(G)$ with $O_p(G)$ abelian.
\end{corollary}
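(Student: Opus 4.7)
The plan is to derive the corollary directly from the Main Theorem. The hypothesis here, that $p \nmid i_G(x)$ for every element of prime power order $x \in A \cup B$, is formally stronger than the hypothesis of the Main Theorem, since it imposes the condition on $p$-elements in addition to $p$-regular elements of prime power order. Applying the Main Theorem therefore already yields $G = O_p(G) \times O_{p'}(G)$; writing $P := O_p(G)$, this is the unique, normal Sylow $p$-subgroup of $G$. All that is left is to upgrade this to $P \leq Z(G)$, which, in view of the direct decomposition $G = P \times O_{p'}(G)$, is equivalent to showing that $P$ is abelian.

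To establish the abelianness of $P$ I would use the part of the hypothesis that is new compared to the Main Theorem, namely its application to $p$-elements of $A \cup B$. Since $G$ is $p$-separable, Lemma~\ref{1.3.2} supplies a Sylow $p$-subgroup of $G$ factorising compatibly with $G = AB$; by uniqueness this subgroup is $P$ itself, so $P = (P \cap A)(P \cap B)$. Every $x \in (P \cap A) \cup (P \cap B)$ is a $p$-element lying in $A \cup B$, hence of prime power order, so by hypothesis $p \nmid i_G(x)$, i.e.\ $P \leq \ce{G}{x}$. Thus $P$ centralises both $P \cap A$ and $P \cap B$, and since these two sets generate $P$, it follows that $P$ centralises itself and is abelian.

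For the converse, if $G = O_p(G) \times O_{p'}(G)$ with $P := O_p(G)$ abelian (hence central in $G$), then any element of prime power order in $G$ is either a $p$-element, lying in $P \leq Z(G)$ and so with trivial index, or a $p'$-element, lying in $O_{p'}(G)$ and centralised by $P$, hence of index coprime to $p$. The essentially only non-trivial ingredient is therefore the Main Theorem itself; the passage from $p$-decomposability to an abelian Sylow $p$-subgroup is just a short centraliser calculation using the compatible Sylow factorisation provided by Lemma~\ref{1.3.2}. I do not expect any serious obstacle once the Main Theorem is available.
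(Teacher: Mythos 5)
Your proof is correct and follows exactly the intended route: the paper states this corollary as an immediate consequence of the Main Theorem without writing out the details, and your derivation (Main Theorem gives $p$-decomposability, then Lemma~\ref{1.3.2} with $\pi=\{p\}$ yields $P=(P\cap A)(P\cap B)$ and the extra hypothesis on $p$-elements forces $P\leq \ce{G}{P}$) is precisely the short argument being left to the reader.
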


Our results provide an improvement of \cite[Theorem 1.1]{BCL} in the case of only two factors, since in that paper  products of $n$ pairwise mutually permutable subgroups were considered.

\begin{corollary}\textup{\cite[Theorem 1.1]{BCL}}\label{adolfo} Let the group $G = AB$ be the mutually product of the subgroups $A$ and $B$, and let $p$ be a prime. Then:
\begin{itemize}
\item[i)] No index $i_G(x) $, where $x$ is a $p$-regular element in  $ A\cup B$, is divisible by $p$ if and only if  $G=O_p(G) \times O_{p'}(G)$.
\item[ii)] $i_G(x) $ is not divisible by $p$ for every element $ x \in A\cup B$ if and only if  $G=O_p(G) \times O_{p'}(G)$ with $O_p(G)$ abelian.
\end{itemize}

\end{corollary}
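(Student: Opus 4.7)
The plan is to derive Corollary~\ref{adolfo} directly from the Main Theorem (for part (i)) and from Corollary~\ref{all} (for part (ii)), noting that the mutually permutable hypothesis is not actually used in the argument; this is precisely what turns Corollary~\ref{adolfo} into the claimed improvement over \cite[Theorem 1.1]{BCL} in the case of two factors.

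For the forward implication of (i), the assumption that $p \nmid i_G(x)$ for every $p$-regular element $x \in A \cup B$ is, in particular, the same assumption restricted to $p$-regular elements of prime power order in $A \cup B$. The Main Theorem therefore applies and yields $G = O_p(G) \times O_{p'}(G)$. Analogously, for the forward implication of (ii), the condition that $p \nmid i_G(x)$ for every $x \in A \cup B$ specialises to the same condition on prime power order elements, so Corollary~\ref{all} applies and delivers $G = O_p(G) \times O_{p'}(G)$ with $O_p(G)$ abelian.

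For the converse implications a short direct verification suffices. If $G = O_p(G) \times O_{p'}(G)$ and $x \in G$ is $p$-regular, write $x = ab$ with $a \in O_p(G)$ and $b \in O_{p'}(G)$; since $|a|$ is a $p$-power while $|x|$ is coprime to $p$ this forces $a = 1$, so $O_p(G) \leq \ce{G}{x}$ and hence $p \nmid i_G(x)$. This proves the converse of (i). If moreover $O_p(G)$ is abelian, then for an arbitrary $x = ab \in G$ the factor $O_p(G)$ centralises both $a$ (by abelianness) and $b$ (by the direct decomposition), so again $O_p(G) \leq \ce{G}{x}$, proving the converse of (ii). There is essentially no obstacle to overcome: the substantive content is already packaged into the Main Theorem and Corollary~\ref{all}, and one only needs to recognise that the stronger hypotheses of Corollary~\ref{adolfo} imply theirs. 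The conceptual point worth flagging in the write-up is simply that the mutually permutable assumption, while inherited from the setting of \cite{BCL}, is redundant for the conclusion.
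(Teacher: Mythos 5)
Your proposal is correct and matches the paper's intent exactly: the paper states this corollary as an immediate consequence of the Main Theorem and Corollary~\ref{all}, since the hypotheses of \cite[Theorem 1.1]{BCL} restricted to (prime power order) elements are precisely those of the stronger results, and the mutually permutable assumption plays no role. Your direct verification of the converse implications is also sound and agrees with the paper's remark that the index conditions clearly hold in a $p$-decomposable group.
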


Finally, we also point out that \cite[Theorem A]{FMOpi} and \cite[Theorem 3.2]{ZGS} when $\pi=p'$ are direct consequences from our main result.

\section{Preliminary results} 

We will use without further reference the following elementary lemma:
\begin{lemma}
Let $N$ be a normal subgroup of a group $G$, and $H$ be a subgroup of $G$. We have:
\begin{itemize}
	\item[\emph{(a)}] $i_N(x)$ divides $i_G(x) $, for each $x\in N$.
	
	\item[\emph{(b)}] $i_{G/N}(xN)$ divides $i_G(x) $, for each $x\in G$.
	
	\item[\emph{(c)}] If $xN$ is a $\pi$-element of $HN/N$, for a set of primes $\pi$, then there exists a $\pi$-element $x_1\in H$ such that $xN = x_1N$.
\end{itemize}
\end{lemma}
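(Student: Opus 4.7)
The three statements are all standard textbook facts, and my plan is to dispatch each in turn using routine identities together with the second isomorphism theorem. There is no serious obstacle; the only place where one must be slightly attentive is part (c), where the representative has to be selected inside $H$ \emph{before} its $\pi$-part is extracted, so that the resulting $\pi$-element automatically lies in $H$.

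For part (a), I would combine two standard observations: for $x\in N$ we have $\ce{N}{x}=\ce{G}{x}\cap N$, and the second isomorphism theorem yields
\[
i_N(x) \;=\; |N : \ce{G}{x}\cap N| \;=\; |N\,\ce{G}{x} : \ce{G}{x}|,
\]
which is a divisor of $|G : \ce{G}{x}|=i_G(x)$. This settles (a).

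For part (b), the key point is the elementary inclusion $\ce{G}{x}N/N\leq \ce{G/N}{xN}$, which is immediate because $g\in\ce{G}{x}$ forces $(gN)(xN)=gxN=xgN=(xN)(gN)$. Consequently
\[
i_{G/N}(xN) \;=\; |G/N : \ce{G/N}{xN}|
\]
divides $|G/N : \ce{G}{x}N/N|=|G : \ce{G}{x}N|$, and the latter divides $|G:\ce{G}{x}|=i_G(x)$.

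For part (c), I would start by choosing a representative $h\in H$ with $hN=xN$, which is possible via $HN/N\cong H/(H\cap N)$. Decompose $h=h_\pi h_{\pi'}$ into its commuting $\pi$- and $\pi'$-parts; both factors are powers of $h$, hence both remain in $H$. The coset $hN=xN$ is a $\pi$-element by hypothesis, so its $\pi'$-component $h_{\pi'}N$ must be trivial, forcing $h_{\pi'}\in N$. Setting $x_1:=h_\pi\in H$ then gives a $\pi$-element with $x_1 N=xN$, as required.
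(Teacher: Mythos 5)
Your proof is correct, and all three arguments are the standard ones (second isomorphism theorem for (a), the inclusion $\ce{G}{x}N/N\leq \ce{G/N}{xN}$ for (b), and the commuting $\pi$/$\pi'$-decomposition of a representative chosen in $H$ for (c)); the paper itself states this lemma without proof as an elementary fact, so there is nothing to diverge from.
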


We will also need the following fact about Hall subgroups of factorised groups, which is a convenient reformulation of \cite[1.3.2]{AFG}. We recall that a group is  a D$_{\pi}$-group, for a set of primes $\pi$, if every $\pi$-subgroup is contained in a Hall $\pi$-subgroup, and any two Hall $\pi$-subgroups are conjugate. It is well known that any $\pi$-separable group is a  D$_{\pi}$-group. Also, all finite groups are D$_{\pi}$-groups when $\pi$ consists of a single prime.

\begin{lemma}\label{1.3.2}
Let $G=AB$ be the product of the subgroups $A$ and $B$. Asume that $A, B$, and $G$ are D$_{\pi}$-groups for a set  of primes $\pi$. Then there exists a Hall $\pi$-subgroup $H$ of $G$ such that $H= (H \cap A)(H \cap B)$, with $H \cap A$ a Hall $\pi$-subgroup of $A$ and $H \cap B$ a Hall $\pi$-subgroup of $B$.
\end{lemma}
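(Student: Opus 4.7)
The plan is to carry out the construction indicated in \cite[1.3.2]{AFG}. The arithmetic backbone is the standard identity $|G|\,|A\cap B|=|A|\,|B|$, which on passing to $\pi$-parts yields
\[
|G|_{\pi}=\frac{|A|_{\pi}\,|B|_{\pi}}{|A\cap B|_{\pi}}.
\]
Consequently, if one can exhibit $A_{\pi}\in\hall{\pi}{A}$ and $B_{\pi}\in\hall{\pi}{B}$ whose set-theoretic product $A_{\pi}B_{\pi}$ is a subgroup of $G$ and for which $|A_{\pi}\cap B_{\pi}|=|A\cap B|_{\pi}$, then
\[
|A_{\pi}B_{\pi}|=\frac{|A_{\pi}|\,|B_{\pi}|}{|A_{\pi}\cap B_{\pi}|}=|G|_{\pi}
\]
forces $A_{\pi}B_{\pi}$ to be a Hall $\pi$-subgroup of $G$ of the required form.

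To produce compatible Hall subgroups, I would start from any $K\in\hall{\pi}{G}$, available by the D$_{\pi}$ hypothesis on $G$, and aim to show that, after replacing $K$ by a suitable $G$-conjugate, $K\cap A\in\hall{\pi}{A}$ and $K\cap B\in\hall{\pi}{B}$ simultaneously. Granted this, set $A_{\pi}:=K\cap A$ and $B_{\pi}:=K\cap B$; then trivially $A_{\pi}B_{\pi}\subseteq K$, while the order identity above, combined with the fact that $A_{\pi}\cap B_{\pi}\leq A\cap B$ is a $\pi$-subgroup (so $|A_{\pi}\cap B_{\pi}|\leq |A\cap B|_{\pi}$), gives $|A_{\pi}B_{\pi}|\geq |K|$. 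Equality of the sizes then forces $A_{\pi}B_{\pi}=K$ and $|A_{\pi}\cap B_{\pi}|=|A\cap B|_{\pi}$, closing the argument.

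The principal obstacle, and the place where all three D$_{\pi}$ hypotheses are really used, is this simultaneous conjugation step. One direction is easy: given any $A_{\pi}\in\hall{\pi}{A}$, the D$_{\pi}$ hypothesis on $G$ allows us to find $g_1$ with $A_{\pi}\leq K^{g_1}$, and then maximality of $A_{\pi}$ in $A$ forces $K^{g_1}\cap A=A_{\pi}$; analogously one finds a $g_2$ working for $B$. Aligning these two choices into a single conjugate of $K$ is the delicate content of \cite[1.3.2]{AFG}, and would typically be carried out by a double-coset/counting argument that exploits the conjugacy clauses in the D$_{\pi}$ hypotheses on $A$ and $B$ together with the order identity above. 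Dropping any one of the three D$_{\pi}$ hypotheses breaks this alignment, which is why the statement is phrased with all three assumptions.
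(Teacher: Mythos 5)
Your reduction is sound as far as it goes: the order identity $|G|_{\pi}=|A|_{\pi}|B|_{\pi}/|A\cap B|_{\pi}$, and the counting argument showing that if $K\cap A\in\hall{\pi}{A}$ and $K\cap B\in\hall{\pi}{B}$ for some $K\in\hall{\pi}{G}$ then necessarily $K=(K\cap A)(K\cap B)$, are both correct, and this is indeed how the argument behind \cite[1.3.2]{AFG} concludes. (The paper itself gives no proof of this lemma; it simply cites that reference.) The problem is that you stop exactly at the step that constitutes the entire content of the lemma: producing \emph{one} conjugate of $K$ that meets $A$ and $B$ simultaneously in Hall $\pi$-subgroups. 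Declaring that this ``would typically be carried out by a double-coset/counting argument'' is not a proof, and the hint even points in the wrong direction: the conjugacy clauses of the D$_{\pi}$ hypotheses on $A$ and on $B$ are not what drives the alignment (only the existence of Hall $\pi$-subgroups of $A$ and $B$ is used there), and no double-coset count is involved.

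The missing idea is to exploit the factorisation $G=AB=BA$ to normalise the conjugating elements. Fix $K\in\hall{\pi}{G}$ and $U\in\hall{\pi}{A}$. By the D$_{\pi}$ property of $G$ there is $g\in G$ with $U\leq K^{g}$; writing $g=ba$ with $b\in B$ and $a\in A$, one gets $U^{a^{-1}}\leq K^{b}$, and $U^{a^{-1}}$ is again a Hall $\pi$-subgroup of $A$. Replacing $K$ by $K^{b}$ we may therefore assume $K\cap A\in\hall{\pi}{A}$. Now take $V\in\hall{\pi}{B}$ and $h\in G$ with $V\leq K^{h}$; writing $h=a'b'$ with $a'\in A$ and $b'\in B$ gives $V^{(b')^{-1}}\leq K^{a'}$. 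The point is that conjugation by $a'\in A$ preserves the property already secured, since $K^{a'}\cap A=(K\cap A)^{a'}\in\hall{\pi}{A}$, while now also $K^{a'}\cap B\in\hall{\pi}{B}$ by maximality. Setting $H:=K^{a'}$, your counting argument then yields $H=(H\cap A)(H\cap B)$. Without some version of this step, your write-up establishes nothing beyond the (easy) numerical consistency of the statement.
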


Next we record some arithmetical lemmas, that will be applied later.

\begin{lemma}\textup{(\cite[Lemma 6]{KMP3})}\label{SylowSym}
Let $G$ be the symmetric group of degree $k$ and let $s$ be a prime. If $s^{ N}$ is the largest power of $s$ dividing $|G|=k!$, then $N \leq \frac{k-1}{s-1}$.
\end{lemma}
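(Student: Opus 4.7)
The plan is to reduce the statement to the classical Legendre formula
\[
N \;=\; v_s(k!) \;=\; \sum_{i\geq 1} \Bigl\lfloor \tfrac{k}{s^i} \Bigr\rfloor,
\]
and then obtain the required bound from its well-known closed form
\[
v_s(k!) \;=\; \frac{k-\sigma_s(k)}{s-1},
\]
where $\sigma_s(k)$ denotes the sum of the digits of $k$ written in base $s$. Since $k\geq 1$ forces $\sigma_s(k)\geq 1$, this yields $N\leq (k-1)/(s-1)$ at once.

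If one wishes to avoid quoting the digit-sum formula, I would give a short induction on $k$. The base case $k<s$ is trivial because then $N=0$ and $(k-1)/(s-1)\geq 0$. For the inductive step, write $k=qs+r$ with $0\leq r\leq s-1$ and $q=\lfloor k/s\rfloor\geq 1$. The multiples of $s$ inside $\{1,2,\dots,k\}$ contribute exactly one factor of $s$ each, together with their remaining $s$-content, so
\[
N \;=\; q + v_s(q!).
\]
Applying the induction hypothesis to $q<k$ gives $v_s(q!)\leq (q-1)/(s-1)$, hence
\[
N \;\leq\; q + \frac{q-1}{s-1} \;=\; \frac{qs-1}{s-1} \;=\; \frac{k-r-1}{s-1} \;\leq\; \frac{k-1}{s-1},
\]
as required.

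There is no real obstacle: this is an elementary arithmetical fact, and both routes (the digit-sum identity or the one-line induction) deliver the bound with equality characterized by $r=0$ and $\sigma_s(k)=1$, i.e.\ by $k$ being a power of $s$. I would present the induction argument since it is entirely self-contained and fits the style of the preliminary section.
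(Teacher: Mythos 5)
Your proof is correct. Note that the paper does not actually prove this lemma; it simply quotes it as \cite[Lemma 6]{KMP3}, so there is no in-text argument to compare against. Both of your routes are sound: the digit-sum form of Legendre's formula, $v_s(k!)=\frac{k-\sigma_s(k)}{s-1}$ with $\sigma_s(k)\geq 1$, gives the bound immediately, and your induction is also airtight --- the recursion $v_s(k!)=q+v_s(q!)$ with $q=\lfloor k/s\rfloor$ follows from $\bigl\lfloor \lfloor k/s\rfloor /s^i\bigr\rfloor=\lfloor k/s^{i+1}\rfloor$, and the arithmetic $q+\frac{q-1}{s-1}=\frac{qs-1}{s-1}=\frac{k-r-1}{s-1}\leq\frac{k-1}{s-1}$ checks out. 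Either version would serve as a self-contained replacement for the citation; your characterization of the equality case ($k$ a power of $s$) is a correct bonus not needed for the application in the paper.
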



\begin{lemma}\textup{(\cite[Lemma 6(iii)]{Zav})}\label{cuentas}
 Let $q,s,t$ be positive integers. Then:
\begin{enumerate}
\item[\emph{(a)}] $(q^s-1,q^t-1)=q^{(s,t)}-1$,
\item[\emph{(b)}] $(q^s+1,q^t+1)=\begin{cases} q^{(s,t)}+1 \quad \text{if both } s/(s,t) \text{ and } t/(s,t) \text{ are odd,}\\
(2,q+1) \quad \text{otherwise,}\end{cases}$
\item[\emph{(c)}] $(q^s-1,q^t+1)=\begin{cases} q^{(s,t)}+1 \quad \text{if } s/(s,t) \text{ is even and } t/(s,t) \text{ is odd,}\\
(2,q+1) \quad \text{otherwise.}\end{cases}$
\end{enumerate}

\end{lemma}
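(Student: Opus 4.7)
The plan is to prove (a) by a Euclidean-algorithm argument on the exponents, and then to deduce (b) and (c) from (a) by exploiting the factorisation $q^{2k}-1=(q^k-1)(q^k+1)$.

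For (a), assuming $s\ge t$, the identity $q^s-1=q^{s-t}(q^t-1)+(q^{s-t}-1)$ gives $(q^s-1,q^t-1)=(q^{s-t}-1,q^t-1)$; iterating mirrors the usual Euclidean reduction on $(s,t)$ and terminates at $q^{(s,t)}-1$.

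For (b) and (c), I set $d=(s,t)$ and write $s=du$, $t=dv$ with $(u,v)=1$. Since $q^s\pm 1$ divides $q^{2s}-1$ and analogously for $t$, both gcds under consideration divide $(q^{2s}-1,q^{2t}-1)=q^{2d}-1=(q^d-1)(q^d+1)$ by (a). In the opposite direction, the standard factorisations of $x^n\pm 1$ show that $q^d+1$ divides $q^s+1$ if and only if $u$ is odd, and divides $q^s-1$ if and only if $u$ is even (and analogously for $t$ and $v$); matching these divisibilities isolates the special cases of the statement (both $u,v$ odd for (b), and $u$ even with $v$ odd for (c)) and puts $q^d+1$ into the gcd there. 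To rule out further odd contributions, any odd prime $p$ dividing $q^d-1$ and also lying in the gcd would force $q^s$ (respectively $q^t$) to be congruent to both $+1$ and $-1$ modulo $p$, whence $p\mid 2$; thus the odd part of the gcd equals that of $q^d+1$ in the special case and is trivial otherwise.

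The main obstacle is the $2$-primary part, which the statement records as the residual factor $(2,q+1)$. I would address it with a short parity case analysis using the $2$-adic formulas for odd $q$: $v_2(q^n-1)$ is $v_2(q-1)$ for $n$ odd and $v_2(q-1)+v_2(q+1)+v_2(n)-1$ for $n$ even, while $v_2(q^n+1)$ is $v_2(q+1)$ for $n$ odd and $1$ for $n$ even. A direct substitution yields $v_2(q^t+1)=v_2(q^d+1)$ whenever $v$ is odd, which gives equality of $2$-parts in the special case of (c); in each ``otherwise'' subcase the minimum of the two valuations collapses to $1=v_2((2,q+1))$. For even $q$ all terms $q^s\pm 1$, $q^t\pm 1$ are odd and $(2,q+1)=1$, so only the odd-part analysis is needed. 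The same scheme handles (b), since swapping the role of $q^s-1$ for $q^s+1$ only changes the parity condition on $u$ (and symmetrically on $v$).
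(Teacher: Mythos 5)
Your argument is correct. Note, however, that the paper does not prove this lemma at all: it is imported verbatim from Zavarnitsin (\cite[Lemma 6(iii)]{Zav}) and used as a black box, so there is no in-paper proof to compare against. Your derivation is the standard elementary one and it is sound: part (a) via the Euclidean reduction $q^s-1=q^{s-t}(q^t-1)+(q^{s-t}-1)$ on exponents; parts (b) and (c) by bounding the gcd above by $(q^{2s}-1,q^{2t}-1)=q^{2d}-1=(q^d-1)(q^d+1)$ with $d=(s,t)$, observing that an odd prime divisor of $q^d-1$ forces $q^s\equiv\pm1$ incompatibly and hence cannot occur, isolating the special parity cases where $q^d+1$ divides both entries, and then settling the $2$-part with the $2$-adic valuation formulas for $q^n\pm1$ (for odd $q$; for even $q$ everything in sight is odd and $(2,q+1)=1$). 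The only places where care is needed --- the upper bound $v_p(\gcd)\le v_p(q^d+1)$ for odd $p$, which follows since $p$ cannot divide both $q^d-1$ and $q^d+1$, and the collapse of $\min\{v_2(\cdot),v_2(\cdot)\}$ to $1$ in the ``otherwise'' cases, which uses that one of $v_2(q-1)$, $v_2(q+1)$ equals $1$ --- are covered by the valuation formulas you quote, so the proof is complete as outlined.
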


We introduce now some additional terminology. Let $n$ be a positive integer and $p$ be a prime number. A prime $r$ is said
to be \textit{primitive with respect to the pair $(p, n)$} (or a \textit{primitive prime divisor of $p^n-1$}) if
$r$ divides $p^n-1$ but $r$ does not divide $p^k-1$ for every
integer $k$ such that $1\leq k< n$.

\begin{lemma}[Zsigmondy, \cite{Zsi}]\label{Zsi}
Let $n$ be a positive integer and $p$ a prime. Then:
\begin{itemize}
\item[\emph{(a)}] If $n \geq 2$, then there exists a prime $r$ primitive with respect to the pair $(p, n)$ unless $n=2$ and $p$ is a Mersenne prime or $(p, n)=(2, 6)$.
\item[\emph{(b)}] If the prime $r$ is primitive with respect to the pair $(p, n)$, then $r-1 \equiv 0\,(\mbox{mod }n)$. In particular, $r \geq n+1$.
\end{itemize}

\end{lemma}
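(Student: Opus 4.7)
The final statement is the classical Zsigmondy theorem, so my plan is essentially to reconstruct the standard proof via cyclotomic polynomials, separating the trivial part (b) from the substantive part (a).

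For part (b) I would simply observe that if $r$ is primitive with respect to $(p,n)$, then by definition the multiplicative order of $p$ modulo $r$ equals $n$. Fermat's little theorem gives $p^{r-1}\equiv 1\pmod{r}$, so $n$ divides $r-1$, which yields at once the congruence $r-1\equiv 0\pmod{n}$ and the inequality $r\geq n+1$.

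For part (a) the natural tool is the $n$-th cyclotomic polynomial $\Phi_n(x)\in\mathbb{Z}[x]$, together with the factorisation $p^n-1=\prod_{d\mid n}\Phi_d(p)$. The first step is the standard arithmetic lemma: if a prime $r$ divides $\Phi_n(p)$, then either the multiplicative order of $p$ modulo $r$ equals $n$ (so $r$ is primitive for $(p,n)$), or $r\mid n$; moreover in the latter case $r$ is the largest prime factor of $n$ and appears in $\Phi_n(p)$ with multiplicity exactly one. Hence to produce a primitive prime divisor it suffices to show that $\Phi_n(p)$ exceeds the largest prime factor of $n$ (or more crudely, exceeds $n$ itself). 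The second step is a size estimate: using $\Phi_n(p)=\prod_{\zeta}(p-\zeta)$ over primitive $n$-th roots of unity, each factor has modulus at least $p-1$, so $|\Phi_n(p)|\geq (p-1)^{\varphi(n)}$; this bound comfortably dominates any prime divisor of $n$ outside a short, explicit list of small pairs $(p,n)$.

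The remaining finitely many pairs are handled by direct computation, and this is where the advertised exceptions emerge: for $n=2$ one has $\Phi_2(p)=p+1$, which fails to have an odd primitive divisor precisely when $p+1$ is a power of $2$, i.e.\ $p$ is a Mersenne prime; and $\Phi_6(2)=3$ has its only prime factor dividing $6$. The main obstacle is the sharp numerical cleanup: the crude bound $(p-1)^{\varphi(n)}$ is not by itself tight enough to isolate exactly these two exceptions, so one either has to refine it or inspect a handful of small pairs one by one. Since the theorem is classical and due to Zsigmondy (1892), in practice I would ultimately defer the explicit case analysis to the original source \cite{Zsi} rather than reproduce it in full.
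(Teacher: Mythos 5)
The paper offers no proof of this lemma at all: it is quoted as a classical result with a pointer to Zsigmondy's 1892 paper, so there is nothing internal to compare your argument against. Your part (b) is complete and correct as written: primitivity of $r$ for $(p,n)$ says exactly that the multiplicative order of $p$ modulo $r$ is $n$, and Fermat's little theorem forces $n \mid r-1$. Your outline for part (a) is the standard cyclotomic-polynomial route and the overall plan is sound.

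One concrete weak spot in part (a): the estimate $\abs{\Phi_n(p)} \geq (p-1)^{\varphi(n)}$ is vacuous precisely when $p=2$, where it reads $\Phi_n(2) \geq 1$. So the claim that this bound reduces matters to ``a short, explicit list of small pairs'' is not accurate as stated: it leaves the entire infinite family $\{(2,n) : n \geq 2\}$ unresolved, and $(2,6)$ is of course one of the genuine exceptions, so the case $p=2$ cannot be waved away. To close this you need a sharper lower bound, e.g.\ from the M\"obius factorisation $\Phi_n(q)=\prod_{d\mid n}(q^d-1)^{\mu(n/d)}$ one gets $\Phi_n(2) > 2^{\varphi(n)}\prod_{j\geq 1}(1-2^{-j}) > 2^{\varphi(n)-2}$, which dominates the largest prime factor of $n$ once $\varphi(n)$ is moderately large, after which a finite check really does isolate $(2,6)$. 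A second, minor imprecision: the assertion that a non-primitive prime divisor $r$ of $\Phi_n(p)$ occurs with multiplicity exactly one fails for $n=2$ (e.g.\ $\Phi_2(7)=8$); this is harmless here only because you treat $n=2$ separately, but the lemma should be stated for $n\geq 3$. Since you ultimately defer the numerical cleanup to the cited source, just as the paper does, these are gaps in the write-up rather than in the strategy.
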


The following lemmas are used when dealing with prime power order elements. We remark that the proof of the first one uses CFSG. 

\begin{lemma}\textup{(\cite[Theorem 1]{FKS})}\label{FKS}
Let $G$ be a group acting transitively on a set $\Omega$ with $|\Omega|>1$. Then there exists a prime power order element $x\in G$ which acts fixed-point-freely on $\Omega$.
\end{lemma}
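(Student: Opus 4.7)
The plan is to reduce to the case of primitive actions and then invoke the classification of finite simple groups. Observe first that an element $x\in G$ is a derangement on $\Omega$ precisely when $x$ lies in no conjugate of the point stabilizer $M=G_\alpha$. A standard counting argument (the union of proper conjugates of $M$ cannot exhaust $G$) already yields the existence of derangements in every transitive action with $|\Omega|>1$, but it gives no control on the order; the whole point is to refine this to a prime power order element.

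Next I would reduce to the primitive case. If $\mathcal{B}$ is a non-trivial system of blocks on $\Omega$ with $1<|\mathcal{B}|<|\Omega|$, then any prime power order derangement of the induced action of $G$ on $\mathcal{B}$ is automatically a prime power order derangement on $\Omega$. Hence by induction on $|\Omega|$ we may assume that $G$ acts primitively, so that $M$ is a maximal subgroup of $G$. Now I would apply the O'Nan--Scott theorem, which splits primitive groups into affine, almost simple, simple diagonal, product action, and twisted wreath types. In every case other than almost simple, the socle contains a regular (or transitive) direct product of isomorphic simple groups, and Zsigmondy's theorem (Lemma~\ref{Zsi}) applied componentwise produces a prime power order element of the socle acting without fixed points on $\Omega$.

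The main obstacle is the almost simple case: given $T\trianglelefteq G\leq \aut{T}$ with $T$ non-abelian simple and $M<G$ a core-free maximal subgroup, one must exhibit a prime power order element of $G$ not conjugate into $M$. The strategy is to apply Zsigmondy's theorem to produce a sufficiently large primitive prime divisor $r$ of $|T|$ whose Sylow $r$-subgroups of $T$ are cyclic, together with Lemma~\ref{cuentas} to control which tori and Levi factors can accommodate an element of order $r$. One then checks, case by case, that such an $r$-element can always be chosen outside every conjugate of $M$: Aschbacher's theorem on maximal subgroups of classical groups, the Liebeck--Seitz analysis for exceptional groups of Lie type, the ATLAS for sporadic groups, and a direct combinatorial argument for alternating groups (where Lemma~\ref{SylowSym} bounds the $r$-part of $|M|$ against the full Sylow in $S_n$) handle the corresponding families.

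The truly delicate step, and the reason CFSG is indispensable, is handling the small number of exceptional configurations in which every ``obvious'' Zsigmondy candidate divides $|M|$ as well: here one must either replace $r$ by a different prime power (for instance a power of the characteristic when $T$ is of Lie type and $M$ is a parabolic, or a suitable prime divisor of $|T:M|$) or appeal to finer structural information about the maximal subgroup $M$. This is the step where a purely elementary argument is insufficient, and where the full strength of the classification is invoked to close the finite list of remaining cases.
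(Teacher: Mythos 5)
The first thing to note is that the paper offers no proof of this statement: Lemma~\ref{FKS} is imported verbatim from Fein, Kantor and Schacher \cite[Theorem 1]{FKS}, and the authors explicitly warn that its proof depends on CFSG. It is used purely as a black box (in Lemma~\ref{feinkantor}), so there is no in-paper argument to compare yours with; the only question is whether your sketch amounts to an independent proof, and it does not. Your opening reductions are fine: a derangement is an element avoiding all conjugates of a point stabiliser, and passing to a block system reduces to the primitive case (with the small extra remark that if $gK$ is a prime power order derangement of $G/K$ on the blocks, then the $p$-part of $g$ maps to a generator of $\langle gK\rangle$ and hence has the same fixed blocks, so one really does get a prime power order derangement in $G$ itself). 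But the O'Nan--Scott step is already wrong as stated: in the product action case an element $(t,1,\dots,1)$ of the socle $T^k$ fixes a point of $\Gamma^k$ exactly when $t$ fixes a point of $\Gamma$, so that case reduces \emph{to} the almost simple case rather than being settled ``componentwise by Zsigmondy''; and Zsigmondy's theorem says nothing when the simple factor is alternating or sporadic. Only the cases with regular socle (affine, twisted wreath) and the diagonal case are genuinely easy.

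The decisive objection is that your treatment of the almost simple case --- which is the entire content of the theorem --- consists of naming tools (Aschbacher's theorem, Liebeck--Seitz, the ATLAS, Lemma~\ref{SylowSym} for $A_n$) and then conceding that a residue of configurations remains in which every Zsigmondy candidate divides $|M|$ and ``one must either replace $r$ \dots\ or appeal to finer structural information''. That residue is precisely where the difficulty of \cite{FKS} lives: for subspace or parabolic actions of groups of Lie type the natural torus elements do lie in conjugates of $M$, and one has to work with elements of the defining characteristic or with carefully chosen semisimple classes, case by case. Since none of this is carried out, the proposal is a plan of attack rather than a proof; the gap is not a missing detail but the whole case analysis that occupies the bulk of the original paper. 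For the purposes of the present article this is harmless --- the lemma is meant to be cited, not reproved --- but as a proof attempt it is incomplete.
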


\begin{lemma}\label{feinkantor}
Let $H$ be a subgroup of a group $G$. If every prime power order element of $G$ lies in $\bigcup_{g\in G} H^g$, then $G=H$.
\end{lemma}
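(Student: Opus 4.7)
The natural tool here is Lemma~\ref{FKS} (Fein--Kantor--Stafford), applied to the coset action of $G$ on $\Omega = G/H$. My plan is to argue by contradiction: assume $H$ is a proper subgroup of $G$, and exhibit a prime power order element of $G$ that lies outside every conjugate of $H$.

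First I would set up the coset action. If $H < G$, then $|\Omega| = [G:H] > 1$, and $G$ acts transitively on $\Omega$ by left multiplication. The crucial observation connecting this action to the hypothesis is the standard identity:
\[
x \cdot gH = gH \iff g^{-1}xg \in H \iff x \in gHg^{-1} = H^{g^{-1}},
\]
so an element $x \in G$ has a fixed point on $\Omega$ if and only if $x$ lies in some conjugate of $H$, i.e.\ $x \in \bigcup_{g\in G} H^g$.

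Now I invoke Lemma~\ref{FKS}: since $G$ acts transitively on $\Omega$ with $|\Omega|>1$, there exists a prime power order element $x \in G$ which acts fixed-point-freely on $\Omega$. By the equivalence above, this $x$ satisfies $x \notin \bigcup_{g\in G} H^g$. This directly contradicts the hypothesis that every prime power order element of $G$ lies in $\bigcup_{g\in G} H^g$. Therefore the assumption $H < G$ is untenable and $G = H$.

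There is no real obstacle here: the proof is essentially a one-line deduction once the coset action is introduced and the fixed-point/conjugate dictionary is written down. All the substantive content (namely, the existence of a prime power order derangement in any nontrivial transitive action, which itself relies on CFSG) is packaged into Lemma~\ref{FKS}, so the argument reduces to applying that lemma in the correct setting.
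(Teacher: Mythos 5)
Your proof is correct and follows essentially the same strategy as the paper: both reduce the statement to Lemma~\ref{FKS} applied to a transitive $G$-action. The only (harmless) difference is that you use the action on the cosets $G/H$, where $|\Omega|=[G:H]>1$ is automatic whenever $H<G$, whereas the paper acts by conjugation on $\{H^g : g\in G\}$ and must therefore first dispose of the case $H\trianglelefteq G$ separately (using that $G$ is generated by its prime power order elements); your choice of $\Omega$ makes that case split unnecessary.
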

\begin{proof}
If $H$ is normal in $G$,  then every prime power order element belongs to $H$, and since $G$ is generated by such elements, we get $G=H$. So we may assume that $H$ is not normal in $G$. Note that $G$ acts on $\Omega:=\{H^g\: : \: g\in G\}$ transitively. If $H<G$, then certainly $|\Omega|>1$ and, by Lemma \ref{FKS}, there exists a prime power order element $x\in G$ acting fixed-point-freely on $\Omega$. But the hypotheses imply that $x\in H^z$ for some $z\in G$, so $H^{zx}=H^z$ and this is a contradiction.
\end{proof}

%

\section{Preliminaries on (almost) simple groups and their prime graphs}

We begin this section with a useful result on the centralisers of automorphisms of simple groups, which is a refinement of  \cite[Lemma 2.6]{DPSS}. In fact, the own proof 
of that lemma provides this stronger result: 
\begin{lemma}\label{aut}
Let $N$ be a simple group. Then there exists $r\in \pi(N) \setminus\pi(\op{Out}(N))$ such that $(r, \abs{\ce{N}{\alpha}})=1$ for every non-trivial $\alpha \in \op{Out}(N)$ of order coprime to $\abs{N}$.
\end{lemma}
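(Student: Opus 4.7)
My plan is to imitate the proof of \cite[Lemma 2.6]{DPSS} and observe that, with only minor additional remarks, it already delivers the stronger conclusion stated here. The classification of finite simple groups (CFSG) provides the natural case split on the isomorphism type of $N$.

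First I would dispose of the cases where $N$ is alternating, sporadic, or the Tits group. In all of these, $\pi(\op{Out}(N)) \subseteq \{2, 3\}$ (indeed $\pi(\op{Out}(N)) \subseteq \{2\}$ apart from a very short list of exceptions), and since $|N|$ is always even, no non-trivial outer automorphism can have order coprime to $|N|$. The hypothesis on $\alpha$ is therefore vacuous, and it suffices to display any $r \in \pi(N) \setminus \pi(\op{Out}(N))$; such a prime is immediate from the order formulas for $A_n$ with $n \geq 5$ and from the ATLAS in the sporadic case.

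The core of the argument concerns $N$ a simple group of Lie type over $\mathbb{F}_q$ with $q = p^f$. I would take $r$ to be a Zsigmondy prime for the pair $(p, e)$, where $e$ is the exponent corresponding to the largest cyclotomic polynomial appearing in $|N|$ (for example $e = nf$ for $\op{PSL}_n(q)$, with analogous choices for the remaining classical and exceptional families). Lemma \ref{Zsi}(b) gives $r \equiv 1 \pmod e$, and in particular $r > f$, so $r \nmid f$. Combined with the standard factorisation $|\op{Out}(N)| = d \cdot f \cdot g$, with $d$ dividing $q-1$ and $g \leq 3$, and with the fact that $r \nmid q - 1$ by primitivity (since $f < e$), this yields $r \in \pi(N) \setminus \pi(\op{Out}(N))$. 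Now for a non-trivial $\alpha \in \op{Out}(N)$ of order coprime to $|N|$, the diagonal and graph contributions (whose orders divide $|N|$) must vanish, so $\alpha$ is forced to be a pure field automorphism; then $\ce{N}{\alpha}$ is essentially a group of the same Lie type over the proper subfield $\mathbb{F}_{q_0}$ with $q_0 = p^{f/|\alpha|}$. Its order is a product of factors of the form $p^{jf/|\alpha|} \pm 1$ with $jf/|\alpha| < e$, none of which is divisible by $r$ by primitivity. Hence $r \nmid |\ce{N}{\alpha}|$, as required.

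The main obstacle is the bookkeeping in the Lie-type case. The Zsigmondy exceptions of Lemma \ref{Zsi}(a), namely $(p, n) = (2, 6)$ and $n = 2$ with $p$ a Mersenne prime, together with the small-rank classical groups and the twisted families $^2B_2$, ${}^2G_2$, ${}^2F_4$, ${}^3D_4$ (whose outer automorphism groups and orders have special shapes), each demand a separate check. In every such exceptional case the required prime $r$ can be read off from the explicit order formulas, exactly as carried out in the proof of \cite[Lemma 2.6]{DPSS}; the only new observation is that the centraliser bound produced there already lives inside $N$ and already avoids $\pi(\op{Out}(N))$, so the sharpening to the present statement is automatic.
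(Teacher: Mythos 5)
Your proposal is correct and follows essentially the same route as the paper: both reduce to the Lie-type case, take $r$ to be a Zsigmondy (primitive) prime divisor for the largest relevant exponent as in the proof of \cite[Lemma 2.6]{DPSS}, and then use Lemma \ref{Zsi}(b) together with the known structure of $\op{Out}(N)$ (diagonal, field and graph parts) to conclude both that $r\notin\pi(\op{Out}(N))$ and that $r$ avoids the centraliser of any field automorphism. The paper's own proof is just a terser version of this, deferring the bookkeeping entirely to \cite{DPSS} and \cite[Table 2.1]{LPS}.
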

\begin{proof}
Following the proof of \cite[Lemma 2.6]{DPSS}, we can assume that $N=G(q)$ is a simple group of Lie type, with $q=p^{e}$, $p$ a prime and $e \geq 3$ a positive integer. In that proof it is shown that the prime $r$ is in fact a primitive prime divisor of $p^{em}-1$ for some integer $m \geq 2$, and that such $r$ always exist under the given assumptions. Now having in mind the orders of the outer automorphisms of the simple groups of Lie type (see for instance \cite[Table 2.1]{LPS}) and applying Lemma \ref{Zsi}  we can deduce that $r \not\in \pi(\op{Out}(N))$ (see also  \cite[2.4. Proposition B]{LPS}). 
\end{proof}

We will denote the \emph{prime graph} of a group $G$, also called, the \emph{Gr\"unberg-Kegel} graph, by $\Gamma(G)$. The set of vertices of such graph is the set $\pi(G)$ of prime divisors of $|G|$, and two vertices $r,s$ are adjacent in  $\Gamma(G)$ if there exists an element of order $rs$ in $G$.  The connected components of the prime graph of a simple group are known from \cite{Wil} and \cite{Kon}. We will denote by $\mathcal{Z}( \Gamma(G))$, the center of the graph, i.e. $\mathcal{Z}( \Gamma(G))=\{p \,  | \,  p \mbox{ is adjacent to } r, \forall r \in \pi(G)\}$.

The following result on the center of the prime graph of alternating and symmetric groups will be used later:

\begin{lemma}\label{angraph} Let $n \geq 5, n\neq 6$ be a positive integer. Let $k$ be the largest positive integer such that $\{n, n-1, \ldots, n-k+1\}$ are consecutive composite numbers.  If $k=1$, then both $\Gamma(A_n)$ and $\Gamma(\Sigma_n)$ are non-connected. For $k \geq 2$, let $t$ be the largest prime number such that $t \leq k$. Then:
\begin{itemize}
\item If $k=2$, then $\Gamma(A_n)$ is non-connected and $\ze{\Gamma(\Sigma_n)}=\{2\}$.
\item If $k=3$, then $\ze{\Gamma(A_n)}=\{3\}$ and $\ze{\Gamma(\Sigma_n)}=\{2,3\}$.
\item If $k\geq 4$, then $\ze{\Gamma(A_n)}=\ze{\Gamma(\Sigma_n)}=\{s\in\pi(\Sigma_n)\, | \, 2\leq s \leq t\}$.
\end{itemize}
\end{lemma}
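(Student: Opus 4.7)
The plan is to make the edges of $\Gamma(\Sigma_n)$ and $\Gamma(A_n)$ explicit by cycle-type analysis, and then read off the centres case by case in $k$.

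First I would prove the edge criterion: for distinct primes $r, s$, one has $r$ adjacent to $s$ in $\Gamma(\Sigma_n)$ if and only if $r+s\leq n$. Sufficiency is immediate by taking the product of a disjoint $r$-cycle and $s$-cycle; necessity holds because an element of order $rs$ must contain in its cycle decomposition at least one cycle whose length is divisible by $r$ and another whose length is divisible by $s$, using at least $r+s$ points. For $\Gamma(A_n)$ with both primes odd, the same criterion applies since an odd-length cycle is an even permutation. For $r=2$ and $s$ an odd prime, I would enumerate cycle types of even permutations of order $2s$: such a permutation cannot contain a $2s$-cycle (parity $-1$), so it must contain at least one $s$-cycle together with a strictly positive even number of transpositions, and the minimal-support realisation has an $s$-cycle and two transpositions. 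Hence $2\sim s$ in $\Gamma(A_n)$ iff $s+4\leq n$.

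Next, set $P:=n-k$; by the definition of $k$ this is the largest prime $\leq n$. For $n\geq 5$, $n\neq 6$, and $k\geq 1$ I would observe $P\geq 5$: otherwise $\pi(\Sigma_n)$ would be contained in $\{2,3\}$, contradicting $5\in\pi(\Sigma_n)$. In particular $k\leq n-5$ throughout.

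For $\Gamma(\Sigma_n)$, a prime $p$ lies in $\ze{\Gamma(\Sigma_n)}$ iff $p+r\leq n$ for every prime $r\in\pi(\Sigma_n)\setminus\{p\}$. Since $P$ is the largest prime $\leq n$, this reduces to $p+P\leq n$, i.e.\ $p\leq k$ (the possibility $p=P$ is ruled out because $p\leq k$ combined with $p=n-k$ forces $n\leq 2k$, contradicting $P\geq 5$ and $k\leq n-5$). Hence $\ze{\Gamma(\Sigma_n)}=\{s\in\pi(\Sigma_n)\,:\, s\leq k\}$, which gives exactly the values $\emptyset$, $\{2\}$, $\{2,3\}$, $\{s\in\pi(\Sigma_n)\,:\, s\leq t\}$ in the four cases $k=1,2,3,k\geq 4$. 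When $k=1$ the vertex $P=n-1$ satisfies $P+r\geq (n-1)+2>n$ for every prime $r\geq 2$, so $P$ is isolated and $\Gamma(\Sigma_n)$ is disconnected.

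Finally, for $\Gamma(A_n)$ I would handle $p=2$ and odd $p$ separately using the edge criterion. The prime $2$ lies in $\ze{\Gamma(A_n)}$ iff $r+4\leq n$ for every odd prime $r\leq n$, equivalently $P+4\leq n$, equivalently $k\geq 4$. An odd prime $p$ lies in the centre iff $p+r\leq n$ for every odd prime $r\neq p$ (which as above reduces to $p\leq k$) together with $p+4\leq n$; since $p\leq k\leq n-5$ automatically gives $p+4\leq n-1<n$, the latter condition is free. Thus the centre of $\Gamma(A_n)$ is $\{3\}$ when $k=3$, equals $\{s\in\pi(\Sigma_n)\,:\,s\leq t\}$ when $k\geq 4$, and is empty when $k\in\{1,2\}$. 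To conclude disconnectedness for $k\in\{1,2\}$, one notes that $P$ is an isolated vertex in $\Gamma(A_n)$: no odd prime $r\neq P$ satisfies $P+r\leq n$ (as $r\geq 3$ and $P=n-k\geq n-2$), and $P+4>n$ precludes $P\sim 2$.

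The only genuinely non-routine step is the sharp edge criterion $2\sim s$ iff $s+4\leq n$ in $\Gamma(A_n)$, since it requires ruling out other cycle types of order $2s$ of smaller support. Once this is secured, every remaining claim is a direct inequality on $k$, $n$, and $P=n-k$.
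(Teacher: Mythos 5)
Your overall route is the same as the paper's: establish the adjacency criteria ($r\sim s$ in $\Gamma(\Sigma_n)$ iff $r+s\le n$; the same for odd primes in $\Gamma(A_n)$, and $2\sim s$ in $\Gamma(A_n)$ iff $s+4\le n$), then read off the centre in terms of the largest prime $P=n-k$. The inclusion $\{s\le k\}\subseteq\ze{\Gamma(\Sigma_n)}$ and the exclusion of primes $s>k$ with $s\ne P$ are correct. The gap is the case $p=P$. First, the parenthetical you offer is not a valid deduction: the inequalities $n\le 2k$, $P=n-k\ge 5$ and $k\le n-5$ are mutually consistent (together they just say $5\le n-k\le k$), so no contradiction follows from them; the true reason that $P>k$ (equivalently $n>2k$) is Bertrand's postulate, which gives $P>n/2$. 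Second, and more seriously, even granting $P>k$ you have only shown that $P$ does not belong to the set $\{s\in\pi(\Sigma_n):s\le k\}$; to conclude that this set \emph{equals} the centre you must also show $P\notin\ze{\Gamma(\Sigma_n)}$, i.e. exhibit a prime $r_1\ne P$ with $P+r_1>n$, equivalently a prime $r_1>k$ with $r_1\ne P$. For $k\le 3$ this is cheap (take $r_1=3$ or $5$), but for $k\ge 4$ it is precisely where the paper invokes the nontrivial fact that $(n/2,n]$ contains two primes $r_1<r_2=P$ for $n\ge 11$, whence $P+r_1>n$. Without this step your formula could a priori be wrong at the single vertex $P$, and that would break the later application, which deduces $p\le t$ from $p\in\ze{\Gamma(G)}$. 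The same omission affects your treatment of $\Gamma(A_n)$ when the odd prime under consideration is $P$ itself.

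A minor further point: in deriving $2\sim s$ iff $s+4\le n$ in $\Gamma(A_n)$ you assert that an even permutation of order $2s$ cannot contain a $2s$-cycle; it can, provided it also contains another even-length cycle, but any such configuration has support at least $2s+2>s+4$, so your minimal-support conclusion survives. This is only a presentational slip, unlike the missing argument for $P\notin\ze{\Gamma(\Sigma_n)}$ above.
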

\begin{proof} It is well known that two odd primes $s, u$ are adjacent in $\Gamma(A_n)$, and so in $\Gamma(\Sigma_n)$, if and only if $s+u \leq n$. On the other hand, if $s$ is an odd prime, $s$ is adjacent to $2$ in $\Gamma(A_n)$ if and only if $s+4 \leq n$, and $2,s$ are adjacent in $\Gamma(\Sigma_n)$ only when $s+2 \leq n$. It is then clear that if $k=1$, then both $\Gamma(A_n)$ and $\Gamma(\Sigma_n)$ are non-connected.

Consider the prime $r:=n-k$, which is the largest prime divisor of  $n!$ by the choice of $k$. Clearly,  $r>\frac{n}{2}>k=n-r \geq t$. Thus $r+t \leq n$, and we deduce that  $t\in \op{Z}(\Gamma(\Sigma_n))$.

If $k=2$, then $r=n-2$, and so $\ze{\Gamma(\Sigma_n)}=\{2\}$. Since $r+4>n$, then $\Gamma(A_n)$ is non-connected.

If $k=3$, then $r=n-3$. It follows that  $\ze{\Gamma(\Sigma_n)}=\{2, 3\}$ and $\ze{\Gamma(A_n)}=\{3\}$.

Finally, let us suppose that $k\geq 4$, so $n\geq 11$. Take a prime $s\in\pi(\Sigma_n)$. If $s\leq t$, then $r+s\leq r+t\leq r+k=n$, and so $s$ lies in $\ze{\Gamma(\Sigma_n)}$. Assume now $s>t$, so $s>k$. It is known that  there exist two primes $\frac{n}{2}<r_1<r_2\leq n$, and we may take $r_2=r$. If $s\neq r$, then $s+r=s+n-k>n$. If $s=r$, then $s+r_1>\frac{n}{2}+\frac{n}{2}=n$. Hence, in both cases $s\notin \ze{\Gamma(\Sigma_n)}$. This proves that $\ze{\Gamma(\Sigma_n)}=\{s\in\pi(\Sigma_n)\, : \, 2\leq s \leq t\}$. Finally, since $k\geq 4$, then $r\leq n-4$, so $r+4\leq n$ and $2\in \op{Z}(\Gamma(A_n))$. Therefore $\ze{\Gamma(A_n)}=\ze{\Gamma(\Sigma_n)}$.
\end{proof}

For the special case of the alternating group $A_6$ and its group of automorphisms we can derive the following result from \cite[Lemma 2]{Luc}:
\begin{lemma}\label{a6}
If $A_6=N\unlhd G \leq  \op{Aut}(N)$, then $\Gamma(G)$ is  non-connected, except when $G=\op{Aut}(N)$. In this last case $\ze{\Gamma(G)}=\{2\}$. 
\end{lemma}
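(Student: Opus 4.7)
The plan is to reduce the claim to a routine inspection of the finite list of overgroups of $N = A_6$ in $\aut{N}$. Since $\op{Out}(A_6) \cong C_2 \times C_2$, there are exactly five subgroups $G$ with $A_6 \unlhd G \leq \aut{N}$: namely $A_6$, its three index-two overgroups $S_6$, $\op{PGL}_2(9)$ and $M_{10}$, and $\aut{N}$ itself. In every case $\pi(G) = \{2,3,5\}$, so the prime graph has only three vertices and I only need to decide, for each pair of distinct primes $r,s \in \{2,3,5\}$, whether $G$ contains an element of order $rs$.

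The second step is to read off the element orders of each of these five groups from \cite[Lemma 2]{Luc}. In $A_6$ there is no element of order $6$, $10$ or $15$, so $\Gamma(A_6)$ is edgeless. In $S_6$ a permutation of cycle type $3+2+1$ yields an edge $\{2,3\}$, but no element can have order $10$ (which would require a disjoint $5$-cycle and $2$-cycle, hence seven moved points) or $15$, so the vertex $5$ is isolated. The element orders of $\op{PGL}_2(9)$ divide $q-1 = 8$, $q+1 = 10$, or $p = 3$, giving the edge $\{2,5\}$ but leaving $3$ isolated. Finally, $M_{10}$ has element orders $\{1,2,3,4,5,8\}$, so none of the mixed edges appears. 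Hence in each of these four cases $\Gamma(G)$ has at least one isolated vertex and is non-connected, as required.

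It remains to handle $G = \aut{N}$. Since $\aut{N}$ contains both $S_6$ and $\op{PGL}_2(9)$, it inherits elements of orders $6$ and $10$, so the edges $\{2,3\}$ and $\{2,5\}$ both lie in $\Gamma(G)$. A last glance at \cite[Lemma 2]{Luc} confirms that no element of $\aut{N}$ has order $15$, so $3$ and $5$ are not adjacent; therefore $\Gamma(G)$ is connected and $\ze{\Gamma(G)} = \{2\}$, completing the verification. The sole (and very mild) obstacle of the argument is to have the precise element-order data for the five candidate groups at hand, which is exactly what \cite[Lemma 2]{Luc} provides, so the proof reduces to a straightforward case-by-case check.
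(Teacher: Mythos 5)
Your proposal is correct and follows essentially the same route as the paper, which derives the statement directly from \cite[Lemma 2]{Luc}; your case-by-case enumeration of the five groups between $A_6$ and $\operatorname{Aut}(A_6)$ and their element orders is precisely the content of that citation, just written out in full. (A small shortcut worth noting: since $\operatorname{Out}(A_6)$ is a $2$-group, every odd-order element of any such $G$ already lies in $A_6$, which rules out elements of order $15$ in all five cases at once.)
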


Also, for sporadic groups the following result is well known (see \cite{Atl} or \cite[Theorem 2]{Wil} and \cite[Theorem 3]{Luc}):

\begin{lemma}\label{sporgraph}
If $N$ is an sporadic simple group, then $\Gamma(N)$ is non-connected. Moreover, $\Gamma(\aut{N})$ is also non-connected, except when $N= McL$ or $N=J_2$, and in both cases $\ze{\Gamma(\aut{N})}=\{2\}$.
\end{lemma}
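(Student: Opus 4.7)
My plan is to deduce both halves of the statement from results already invoked in this section. For the first half, I would simply cite Williams' classification of simple groups with disconnected prime graph \cite{Wil}, where, case by case, the connected components of $\Gamma(N)$ are listed for each of the $26$ sporadic groups; the same information can also be read off from the element-order tables in \cite{Atl}. So no extra work is needed here.

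For the second half the plan is a short case analysis. Every sporadic simple group satisfies $|\out{N}|\leq 2$. If $\out{N}=1$, then $\aut{N}=N$ and the first half already yields that $\Gamma(\aut{N})$ is non-connected. If $|\out{N}|=2$, so $\aut{N}=N.2$, the key structural remark is that every element of the non-trivial coset $\aut{N}\setminus N$ has even order, since its image in $\out{N}\cong C_2$ is non-trivial. Hence any edge present in $\Gamma(\aut{N})$ but absent from $\Gamma(N)$ must be incident to the vertex $2$: an element of order $rs$ for distinct primes $r,s$ lying outside $N$ forces $2\in\{r,s\}$. This observation controls how many new edges can appear when one passes from $\Gamma(N)$ to $\Gamma(\aut{N})$.

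With that remark in hand, the verification reduces to deciding, for each of the twelve sporadic $N$ with $|\out{N}|=2$, which odd primes $r\in \pi(N)$ arise as $r=n/2$ for some element of even order $n$ in $\aut{N}\setminus N$. I would then quote \cite[Theorem 3]{Luc} for the outcome (which is also a direct Atlas check): the only two sporadic groups in which every odd prime of $\pi(N)$ is reached in this way are $J_2$ and $McL$, whereas in every other case the components of $\Gamma(N)$ distinct from the one containing $2$ remain pairwise isolated, so $\Gamma(\aut{N})$ stays non-connected. In the two exceptional cases the vertex $2$ becomes adjacent to every other vertex of $\pi(\aut{N})=\pi(N)$, so $2\in\ze{\Gamma(\aut{N})}$; no odd prime can be central, for otherwise an edge between two odd primes lying in distinct components of $\Gamma(N)$ would need to appear in $\Gamma(\aut{N})$, contradicting the structural remark above. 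Hence $\ze{\Gamma(\aut{N})}=\{2\}$. The hard (or rather, tedious) part is precisely this case-by-case Atlas inspection for the twelve groups with $|\out{N}|=2$, which is exactly what \cite[Theorem 3]{Luc} packages for us.
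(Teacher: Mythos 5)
Your proposal is correct and follows essentially the same route as the paper, which gives no argument at all and simply cites \cite{Atl}, \cite[Theorem 2]{Wil} and \cite[Theorem 3]{Luc} for this lemma. Your additional parity observation (elements of $\aut{N}\setminus N$ have even order, so any new edge is incident to the vertex $2$, whence no odd prime can be central) is a correct and worthwhile piece of bookkeeping that the paper leaves implicit when extracting $\ze{\Gamma(\aut{N})}=\{2\}$ from Lucido's component lists.
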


We will use the following facts on groups of Lie type in the proof of our Main Theorem.
In the sequel, for $q=t^e$, $e\ge 1$, we will denote by $q_n$ \emph{any} primitive  prime divisor of $t^{en}-1$, i.e. primitive with respect to  $(t, ne)$.

\begin{lemma}\label{class}

For $N=G(q)$  a classical simple group of Lie type of characteristic $t$  and  $q=t^e$,   there exist primes $r, \, s \in \pi(N) \setminus (\pi(\out{N}) \cup \{t\})$ and maximal tori $T_1$ and $T_2$ of $N$,  of orders divisible by $r$ and $s$, respectively, with $( |T_1|, |T_2|)=1$,  as stated in Table 1. (In such table for the case $\star$, $l$   denotes a Mersenne prime.)

Moreover, for the groups $N$ listed in Table 2, there exist a prime $s \in \pi(N) \setminus (\pi(\out{N}) \cup \{t\})$ and a Sylow $s$-subgroup of order $s$ which is self-centralising in $N$. 

\smallskip
If $N=L_2(q)$, $C_N(x)$ is a $t$-group for each $t$-element $x \in N$.

\smallskip
If $N=L_3(q)$, there exists a maximal torus $T$  of order $(1/d) (q^2+q+1)$, $d=(3, q-1)$, such that each prime $r \in \pi(T)$ is a primitive prime divisor of $q^3-1$ (for $q\neq 4$), and $(|T|, 2t)=1$.

\smallskip
If $N=U_3(q)$, there exists a maximal torus $T$  of order $(1/d) (q^2-q+1)$, $d=(3, q+1)$,  such that each prime $r \in \pi(T)$ is a primitive prime divisor of $q^6-1$, and $(|T|, 2t)=1$.
\begin{table}[ht!]
$$\begin{array}{c|c|c|c|c|c}
\hline
& & & & & \\
 N& r &s& |T_1|& |T_2| & Remarks \\
& & & & & \\
\hline
& & & & & \\
 L_n(q)&  q_n &q_{n-1} & \frac{q^n-1}{(n, q-1)(q-1)}&\frac{q^{n-1}-1}{(n, q-1)}& (n, q) \neq (6, 2)\\
n \geq 4 & & & & & (n, q) \neq (4, 4), (7, 2)\\
&& s=7&&&(n,q)=(4,4)\\
& & & & &  \\
\hline
& & & & & \\
U_{n}(q) &  q_{n} & q_{2(n-1)} &\frac{q^{n}-1}{(n, q+1)(q+1)}& \frac{(q^{n-1}+1)}{(n, q+1)}&  n \mbox{ even }  \\
 & & & & & (n, q) \neq (4, 2), (6, 2) \\
  & & & & & \\
   n\geq 4 &  q_{2n} & q_{n-1} & \frac{q^{n}+1}{(n, q+1)(q+1)} & \frac{q^{n-1}-1}{(n, q+1)}& n  \mbox{ odd} \\
 & & & & & (n, q) \neq (7, 2)  \\
& & & & &  \\
\hline
& & & & & \\
 PSp_{4}(q) &  q_{4} & q_{2} & \frac{q^{2}+1}{(2, q-1)}& \frac{(q^{2}-1)}{(2, q-1)}& q\neq 8, l  \quad (\star) \\
&& s=7&&& q=8\\
&& s \neq 2&&& q=l \\
& & & & &  \\
\hline
& & & & & \\
 PSp_{2n}(q) &  q_{2n} & q_{2(n-1)} & \frac{q^{n}+1}{(2, q-1)}& \frac{(q^{n-1}+1)(q-1)}{(2, q-1)}&   n \mbox{ even }  \\
 & && & & (n, q) \neq (4,2) \\
  P\Omega_{2n+1}(q) & & & & & \\
   &  q_{2n} & q_{n} & \frac{q^{n}+1}{(2, q-1)} &  \frac{(q^{n}-1)}{(2, q-1)}& n  \mbox{ odd }    \\
  n\geq 3 &  & & & & (n, q) \neq (3, 2)  \\
& & & & &  \\
\hline
& & & & & \\
P\Omega_{2n}^{-}(q)  & q_{2n} & q_{2(n-1)} & \frac{q^{n}+1}{(4, q^n+1)}& \frac{(q^{n-1}+1)(q-1)}{(4, q^n+1)}&  (n, q) \neq (4, 2) \\
n \geq 4 &&&&&\\
& & & & &  \\

\hline
& & & & &  \\
P\Omega_{2n}^{+}(q) & q_{2(n-1)} & q_{n-1} &\frac{(q^{n-1}+1)(q+1)}{(4, q^n-1)}  &\frac{(q^{n-1}-1)(q-1)}{(4, q^n-1)} &  n \mbox{ even }  \\
 & & &&&(n,q)\neq (4,2)\\

  n \geq 4  & q_{2(n-1)} &  q_{n} &\frac{(q^{n-1}+1)(q+1)}{(4, q^n-1)} & \frac{q^n-1}{(4, q^n-1)} & n \mbox{ odd}\\
  & & & & &  \\
 \hline
 \end{array}$$
 \caption{Maximal tori for classical groups}\label{tori}
 \end{table}

\begin{table}[h]
\centering
\[ \begin{array}{|c|c|} \hline
\, N  \, & \, s \, \\
 \hline
\, L_3(4)  \, & \quad 7 \quad \\
 \hline
\, L_6(2)  \, & \quad 31 \quad \\

 \hline
\, L_7(2)  \, & \quad 127 \quad \\

 \hline
\, U_6(2)  \, & \quad 11 \quad \\

 \hline
\, U_7(2)  \, & \quad 43 \quad \\

\hline
\, PSp_{4}(4)  \, & \quad 17 \quad \\

 \hline
\, PSp_{6}(2)  \, & \quad 7 \quad \\

\hline
\, PSp_{8}(2)  \, & \quad 17 \quad \\

 \hline
\, P\Omega_{8}^{-}(2)  \, & \quad 17 \quad \\

 \hline
\, P\Omega_{8}^{+}(2)  \, & \quad 7 \quad \\

 \hline
\end{array}
\]\vspace{-.1cm}
\caption{Self-centralising Sylow $s$-subgroups of order $s$}\label{self}
\end{table}

\end{lemma}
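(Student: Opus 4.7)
The plan is a lengthy case-by-case verification organised along the families of classical simple groups listed in Table \ref{tori}. For each family $N = G(q)$ with $q = t^e$, the strategy has three ingredients: (i) exhibit two maximal tori $T_1, T_2$ of $N$ of the prescribed orders, (ii) apply Zsigmondy's theorem (Lemma \ref{Zsi}) to produce primitive prime divisors $r$ of $|T_1|$ and $s$ of $|T_2|$, and (iii) use the arithmetic identities of Lemma \ref{cuentas} to verify both that $(|T_1|, |T_2|) = 1$ and that neither $r$ nor $s$ lies in $\pi(\out{N}) \cup \{t\}$.

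For step (i), the existence of the maximal tori of the listed orders is standard: they arise from specific conjugacy classes in the Weyl group (Singer-type or Coxeter elements), and the explicit orders can be read off from the standard parametrisation of maximal tori in finite classical groups. For step (ii), Lemma \ref{Zsi} supplies a primitive prime $q_m$ dividing $t^{em} - 1$ except in the Mersenne and $(2, 6)$ cases; the ``Remarks'' column of Table \ref{tori} is designed precisely to exclude those parameters. For step (iii), Lemma \ref{cuentas} handles pairs such as $((q^n - 1)/(q-1),\, q^{n-1} - 1)$ or $(q^n + 1,\, (q^{n-1} + 1)(q-1))$ directly, giving coprimality up to small factors already absorbed into the denominators $(n, q \pm 1)$ or $(4, q^n \pm 1)$. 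Moreover, Lemma \ref{Zsi}(b) gives $q_m \geq em + 1$, and together with the explicit description of $|\out{N}|$ in \cite[Table 2.1]{LPS} this rules out $q_m \in \pi(\out{N})$: the order of $\out{N}$ for a classical group involves only small primes and divisors of $q \pm 1$, which primitivity already excludes.

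For the small-rank or small-characteristic cases excluded by the remarks column, the fallback is Table \ref{self}: for each such $N$ one verifies directly, using the \textsc{Atlas} \cite{Atl}, that there exists a prime $s \notin \pi(\out{N}) \cup \{t\}$ whose Sylow $s$-subgroup of $N$ has order $s$ and is self-centralising. The remaining small-rank assertions are essentially immediate. In $L_2(q)$ a Sylow $t$-subgroup is elementary abelian and equal to its own centraliser, so centralisers of non-trivial $t$-elements are $t$-groups. For $L_3(q)$ and $U_3(q)$ the Coxeter cyclic maximal tori of orders $(q^2 + q + 1)/(3, q - 1)$ and $(q^2 - q + 1)/(3, q + 1)$ are standard, and Lemma \ref{cuentas} shows that their prime divisors are primitive divisors of $t^{3e} - 1$ and $t^{6e} - 1$ respectively, so automatically coprime to $2t$ (modulo the isolated exception $q = 4$ for $L_3$, mentioned explicitly in the statement).

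The main obstacle is not any single calculation but the careful bookkeeping of exceptions: one must match Zsigmondy's exceptional pairs against each family, detect the rare cases where a Zsigmondy prime could accidentally fall in $\pi(\out{N})$, and in those cases invoke Table \ref{self} instead. The most delicate entry is the starred case $PSp_4(q)$ with $q = l$ a Mersenne prime: there the expected primitive divisor $q_2$ disappears, and the prime $s$ must be chosen by hand as an odd prime divisor of $|T_2| = (q^2 - 1)/(2, q - 1)$. Once this bookkeeping is in place, each row of Tables \ref{tori} and \ref{self} reduces to a short arithmetic verification.
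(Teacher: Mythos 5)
Your proposal is correct and follows essentially the same route as the paper: existence of the tori from the standard parametrisation (the paper cites Carter and Vasil'ev--Vdovin), Zsigmondy (Lemma \ref{Zsi}) for the primitive primes, Lemma \ref{cuentas} for coprimality, and the \textsc{Atlas} plus ad hoc checks for the exceptional rows and Table \ref{self}. Your added justification that the primitive primes avoid $\pi(\out{N})$ via Lemma \ref{Zsi}(b) and the explicit outer automorphism orders is a detail the paper relegates to the proof of Lemma \ref{aut} but is consistent with it.
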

\begin{proof} We recall that a torus is an abelian $t'$-group. The existence of the subgroups $T_1$ and $T_2$ appearing in Table 1 can be derived from the known facts about the maximal tori in these groups (see, \cite[Propositions 7-10]{Car} or \cite[Lemma 1.2]{VV}). The fact that the corresponding orders of the tori are coprime in each case can be deduced easily from Lemma \ref{cuentas}, while the assertion regarding the primitive prime divisors is deduced from Lemma \ref{Zsi}. 

The information in Table 2 can be found either in \cite{Atl}, or from the orders of maximal tori for the corresponding groups.

Note that the case $PSp_{4}(2)\cong \Sigma_6$ has already been considered in Lemma \ref{a6}. 

The assertion on $L_2(q)$ is well known (see for instance  \cite[Proposition 7]{Car}).

The existence of tori of the corresponding orders in $L_3(q)$ and $U_3(q)$ can be found in  \cite[Lemma 1.2]{VV}, and the claim on the prime divisors is easily deduced applying Lemma \ref{cuentas}.
\end{proof}

\newpage

%
%
%
%
%
%
%
%

\begin{lemma}\label{excep}
For $N=G(q)$   an exceptional  simple group of Lie type of characteristic $t$  and  $q=t^e$,   there exist primes $r, \, s \in \pi(N) \setminus (\pi(\out{N}) \cup \{t\})$ and maximal tori $T_1$ and $T_2$ of $N$,  of orders divisible by $r$ and $s$, respectively, with $( |T_1|, |T_2|)=1$,  as stated in Table 3.

In the cases denoted by $(\star)$, $r$ and $s$ denote the largest prime divisor of $|T_1|$ and $|T_2|$, respectively.

The Tits group $N=F_4(2)'$ contains a Sylow 13-subgroup of order 13 which is self-centralising. 

\begin{table}[ht!]
$$\begin{array}{c|c|c|c|c|c}
\hline
& & & & & \\
 N& r &s& |T_1|& |T_2| & Remarks \\
& & & & & \\
\hline
& & & & & \\
G_2(q)&  q_3 &q_{6} & q^2+q+1&q^2-q+1& q\neq 4\\
q > 2 & r=7 & & & & q=4\\
& & & & &  \\
\hline
& & & & & \\
F_{4}(q) &  q_{8} & q_{12} &q^{4}+1 & q^4-q^2+1&  \\
 & & & & &   \\
\hline
& & & & & \\

E_6(q)&  q_{9} & q_{12} & \frac{q^{6}+q^3+1}{(3, q-1)}& \frac{(q^4-q^2+1)(q^2+q+1)}{(3, q-1)}&  \\
& & & & &  \\
\hline
& & & & & \\

E_7(q)&  q_{9} & q_{14} & \frac{(q^6+q^3+1)(q-1)}{(2, q-1)} & \frac{q^{7}+1}{(2, q-1)}&  \\
& & & & &  \\
\hline
& & & & & \\
E_8(q)&  q_{20} & q_{24} & q^8-q^{6}+q^4-q^2+1& q^8-q^4+1&  \\
& & & & &  \\
\hline
& & & & & \\
 {}^3D_4(q)&  q_{3} & q_{12} &(q^{3}-1)(q+1)& q^4-q^2+1&    \\
& & & & &  \\
\hline
& & & & & \\  
{}^2B_2(q)  & r & s & q+\sqrt{2q}+1& q-\sqrt{2q}+1& (\star) \\
q=2^{2m+1} >2 &&&&&\\
& & & & &  \\
\hline
& & & & & \\  
{}^2G_2(q)  & r & s & q+\sqrt{3q}+1& q-\sqrt{3q}+1& (\star) \\
q=3^{2m+1} >3 &&&&&\\
& & & & &  \\
\hline
& & & & & \\  
{}^2F_4(q)  & r & s & \small{q^2+q \sqrt {2q}+q+\sqrt{2q}+1}& (q-\sqrt{2q}+1)(q-1)& (\star) \\
q=2^{2m+1} >2  &&&&&\\
& & & & &  \\

 \hline
& & & & & \\
 {}^{2}E_6(q)&  q_{18} & q_{12} & \frac{q^{6}-q^3+1}{(3, q+1)}& \frac{(q^4-q^2+1)(q^2-q+1)}{(3, q+1)}&  \\

& & & & &  \\
\hline
 \end{array}$$
 \caption{Maximal tori for exceptional groups}\label{tori2}
 \end{table}
\end{lemma}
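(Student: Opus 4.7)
The plan is to mirror the argument used for classical groups in Lemma \ref{class}: the existence of maximal tori of the orders listed in Table \ref{tori2} is classical, and can be read off from the explicit tables of maximal tori of exceptional groups, for instance from Carter's description in \cite[Propositions 7--10]{Car} or from \cite[Lemma 1.2]{VV} (together with the standard references on Suzuki, Ree and Tits groups for the twisted families). So for each row of Table \ref{tori2} the first task is just to locate $T_1$ and $T_2$ of the stated order inside the classification.

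The arithmetical heart of the argument is the coprimality $(|T_1|,|T_2|)=1$. Here I would apply Lemma \ref{cuentas} systematically. For instance, in $G_2(q)$ with $q\neq 4$ one writes $q^2+q+1=(q^3-1)/(q-1)$ and $q^2-q+1=(q^3+1)/(q+1)$ so that a common prime would divide $(q^3-1,q^3+1)\mid 2$; and since $q^2\pm q+1$ are odd, the gcd is $1$. The same philosophy handles $F_4$, $E_6$, $E_7$, $E_8$, ${}^3D_4$ and ${}^2E_6$: each torus order is a cyclotomic-like polynomial $\Phi_n(q)$ (possibly multiplied by a small factor $q\pm 1$), and Lemma \ref{cuentas}(a)--(c) reduces the gcd to a common prime divisor of $(q-1)(q+1)$, which is ruled out since each $\Phi_n(q)$ is coprime to $q^2-1$ for the values of $n$ occurring in Table \ref{tori2}. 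For the Suzuki, Ree and Tits tori one uses instead the standard factorisations $q^2+1=(q+\sqrt{2q}+1)(q-\sqrt{2q}+1)$ (and analogously for $q^3+1$ in ${}^2G_2$, and for $q^6+1$ in ${}^2F_4$), from which coprimality of the two tori orders is immediate after checking the remaining small factor $q-1$.

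To choose $r$ and $s$, I apply Zsigmondy's theorem (Lemma \ref{Zsi}) to pick primitive prime divisors $q_n$ with $n$ as prescribed in the table (e.g.\ $q_9, q_{12}$ for $E_6(q)$, or $q_{20}, q_{24}$ for $E_8(q)$). Being primitive prime divisors of $t^{en}-1$, these primes are automatically different from the characteristic $t$. To verify that they lie outside $\pi(\op{Out}(N))$, I combine Lemma \ref{Zsi}(b) (which forces $r\equiv 1 \pmod{en}$, so $r\geq en+1$) with the list of outer automorphism orders of exceptional groups from \cite[Table~2.1]{LPS}: in each case $|\op{Out}(N)|$ is divisible only by very small primes (dividing $2$, $3$, or the graph--field factor $e$ together with the diagonal $(3,q\mp 1)$ or $(2,q-1)$), all of which are smaller than $en+1$ for the relevant $n$. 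The Zsigmondy exceptions $(t,en)=(2,6)$ and the Mersenne case force the separate small-$q$ entries in the table (for instance $G_2(4)$, where I simply exhibit $r=7$ by hand). For the starred families ${}^2B_2$, ${}^2G_2$, ${}^2F_4$, I take $r$ and $s$ to be the largest prime divisors of the respective torus orders and check from the orders of $\op{Out}(N)$ (cyclic of order $2m+1$) that these primes cannot be in $\pi(\op{Out}(N))$, again because they are $\geq q\pm\sqrt{cq}+1>2m+1$. Finally, the claim about the Tits group $F_4(2)'$ is read directly from the character table in \cite{Atl}.

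The main obstacle is not conceptual but bookkeeping: the case-by-case verification across all exceptional families, particularly confirming that the primitive prime divisors $q_n$ chosen in the two tori are genuinely distinct (which reduces to the coprimality of $|T_1|$ and $|T_2|$ already discussed) and that they avoid $\pi(\op{Out}(N))$ in every small-$q$ case. The Suzuki/Ree/Tits rows and the handful of small exceptions such as $G_2(4)$ require ad hoc verification rather than fitting into a uniform Zsigmondy-based scheme.
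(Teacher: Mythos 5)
Your proposal follows essentially the same route as the paper's proof: existence of the tori is read off from the known classification of maximal tori of exceptional groups (the paper cites \cite[Lemma 1.3]{VV} and \cite[Lemma 2.6]{VV2}, and \cite[Lemma 2.8]{VV2} for the Suzuki/Ree/${}^2F_4$ cases), coprimality is reduced to Lemma~\ref{cuentas} via the divisibility $|T_i|\mid q^n-1$, and the primes $r,s$ are obtained from Zsigmondy. Your additional care about excluding $\pi(\op{Out}(N))$ and the small-$q$ exceptions is consistent with, and slightly more explicit than, what the paper records.
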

\begin{proof}
The existence of the subgroups $T_1$ and $T_2$ appearing in Table 3 can be derived from the information about the maximal tori in these groups (see \cite[Lemma 1.3]{VV} and \cite[Lemma 2.6]{VV2}).

The fact that they are coprime can be deduced from Lemma \ref{cuentas} having in mind that $|T_i|$ divides $q^n-1$ when we state that $q_n \in \pi(T_i)$, $i=1, 2$,  (for the case  ${}^3D_4(q)$, $|T_1|$ divides $q^6-1$), for all groups except ${}^2B_2(q), {}^2G_2(q), {}^2F_4(q) $. In the latter cases, denoted by $(\star)$,  the information can be obtained from \cite[Lemma 2.8]{VV2}
\end{proof}
The previous lemmas provide the following result on the center of the prime graph of a simple group of Lie type, which can also be derived from \cite[Proposition 2.9]{center}. 

\begin{corollary}
If $N$ is a simple group of Lie type, then $\ze{\Gamma(N)}=\emptyset$.
\begin{proof}
Let $t$ be the characteristic of  the group of Lie type $N$. It is well known that $t \not \in \ze{\Gamma(N)}$. If $p \in  \ze{\Gamma(N)}$, then for each prime $r \neq t$, there exists an abelian $t'$-subgroup of $N$  whose order is divisible by $p$ and $r$. Since any abelian $t'$-subgroup is contained in a maximal torus, this means that $p \in \pi(T)$ for each maximal torus $T$ of $N$. Hence, the information given in Lemmas \ref{class} and \ref{excep} leads to a contradiction.  
\end{proof}

\end{corollary}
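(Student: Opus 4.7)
My plan is to argue by contradiction: assume $p\in\ze{\Gamma(N)}$ for a simple Lie-type group $N=G(q)$ of characteristic $t$, $q=t^e$. The first move is to dispose of the defining characteristic. It is standard that $t\notin\ze{\Gamma(N)}$, for instance because each of the tori produced in Lemmas \ref{class} and \ref{excep} is a $t'$-group, so there exist semisimple prime divisors of $|N|$ whose order is not divisible by $t$ and which sit in an abelian subgroup disconnected from $t$ in $\Gamma(N)$. Hence we may assume $p\neq t$.

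The next step is to translate the hypothesis $p\in\ze{\Gamma(N)}$ into a statement about maximal tori. Since $p$ is adjacent in $\Gamma(N)$ to every prime of $\pi(N)$, for each prime $r\in\pi(N)\setminus\{t\}$ there exists an element $x\in N$ of order $pr$. Then $\langle x\rangle$ is a cyclic, hence abelian, $t'$-subgroup, and a well-known property of finite groups of Lie type asserts that every abelian $t'$-subgroup is contained in a maximal torus. Consequently there is a maximal torus $T=T(r)$ of $N$ with $p\cdot r \mid |T(r)|$.

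Finally, I would combine this with Lemmas \ref{class} and \ref{excep}. These provide two maximal tori $T_1,T_2$ with $(|T_1|,|T_2|)=1$ together with distinguished primes $r\in\pi(T_1)$ and $s\in\pi(T_2)$ that, in most rows of the two tables, are primitive prime divisors of $t^{ea}-1$ and $t^{eb}-1$ for specific integers $a,b$. The point is that primitivity severely restricts the tori in which an element of order $r$ can sit: any maximal torus whose order is divisible by such a primitive $r$ must have order divisible by the cyclotomic factor $\Phi_{a}(q)$, and by inspection of the maximal torus orders of $G(q)$ this singles out the conjugacy class of $T_1$. Consequently $T(r)$ is conjugate to $T_1$ and $T(s)$ to $T_2$, whence $p$ divides both $|T_1|$ and $|T_2|$, contradicting $(|T_1|,|T_2|)=1$.

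The main obstacle is the handful of exceptional rows marked $(\star)$ in Table \ref{tori2}, namely the Suzuki, Ree and Tits-type groups ${}^2B_2(q)$, ${}^2G_2(q)$, ${}^2F_4(q)$, together with the Tits group $F_4(2)'$ and the small-parameter anomalies in Tables \ref{tori} and \ref{tori2} (for instance $G_2(4)$, $PSp_4(8)$, $PSp_4(l)$ with $l$ a Mersenne prime). For those cases $r$ and $s$ are not primitive prime divisors of $q^n-1$ in the classical sense, and one cannot directly invoke the cyclotomic argument. I would handle them by direct inspection: using Lemma \ref{cuentas} one checks that the only tori whose orders are divisible by $r$ (respectively $s$) are conjugates of $T_1$ (respectively $T_2$), so the contradiction persists; for $F_4(2)'$ the self-centralising Sylow $13$-subgroup recorded in Table \ref{self} immediately rules out $p=13\in\ze{\Gamma(N))}$ and a similar self-centralising-Sylow inspection deals with the remaining small groups of Table \ref{self}.
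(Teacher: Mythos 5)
Your argument follows the paper's proof essentially verbatim: reduce to $p\neq t$, use adjacency of $p$ to the distinguished primes $r,s$ of Lemmas \ref{class} and \ref{excep} to place $p$ inside abelian $t'$-subgroups lying in maximal tori, and contradict the coprimality $(|T_1|,|T_2|)=1$. You are in fact somewhat more explicit than the paper at the one delicate step --- using primitivity to argue that the tori detected by $r$ and $s$ are conjugates of $T_1$ and $T_2$ --- and in flagging the $(\star)$ rows and small-parameter exceptions, but the route is the same.
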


\section{The minimal counterexample: reduction to the almost simple case}

In this section we will give a description of the structure of a minimal counterexample to our Main Theorem. Hence,  having in mind Lemma \ref{pclos}, we
assume the following hypotheses: 
\begin{description}
 \item[(H1)] $p$ is a prime number.

 \item[(H2)] $G$ is a group satisfying the following conditions: 
\begin{enumerate}
\item[(i)] $G=AB$ is the product of the subgroups $A$ and $B$, and  $p$ does not divide $i_G(x) $ for every $p$-regular element  of prime power order $x \in A \cup B$.

\item[(ii)] $G$ does not have a normal Sylow $p$-subgroup.
\end{enumerate}
\end{description}
Among all such groups we choose $(G, A, B)$ such that $|G|+|A|+|B|$ is minimal. \\

For such a group $G$ we have the following results.

\begin{lemma}\label{0}
$G$ has a unique minimal normal subgroup $N$ which is not a $p$-group. Moreover, $P \neq 1$,  $PN \unlhd G$,  $G/N=PN/N \times O_{p'}(G/N)$,  and $G=NN_{G}(P)$, for each $P \in \syl{p}{G}$. 
\end{lemma}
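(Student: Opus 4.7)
The plan is to exploit the minimality of $(G,A,B)$ by passing to quotients $G/M$ for each minimal normal subgroup $M$ of $G$. The first task is to verify that hypothesis (H2)(i) descends to such a quotient: with the factorisation $G/M=(AM/M)(BM/M)$, any $p$-regular element $\bar{x}$ of prime power order in $(AM/M)\cup(BM/M)$ can, by part (c) of the elementary lemma at the start of Section 2, be written as $\bar{x}=x_1M$ with $x_1\in A\cup B$ of the same prime power order, hence still $p$-regular, and then $i_{G/M}(\bar{x})$ divides $i_G(x_1)$, so it is not divisible by $p$. Since $M\neq 1$ gives $|G/M|+|AM/M|+|BM/M|<|G|+|A|+|B|$, minimality forces $G/M$ to have a normal Sylow $p$-subgroup, and Lemma \ref{pclos} then promotes this to the $p$-decomposition $G/M=PM/M\times O_{p'}(G/M)$.

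With this quotient reduction in hand, I fix a minimal normal subgroup $N$ and read off the individual assertions. If $N$ were a $p$-group, then $N\leq O_p(G)\leq P$, so the normal Sylow $p$-subgroup $PN/N=P/N$ of $G/N$ would lift to $P\unlhd G$, violating (H2)(ii); hence $N$ is not a $p$-group. This immediately gives $P\neq 1$ (otherwise $P$ is trivially normal), and $PN$ is normal in $G$ as the preimage of the normal Sylow $p$-subgroup of $G/N$. The decomposition $G/N=PN/N\times O_{p'}(G/N)$ is exactly the $p$-decomposability derived above, and a Frattini argument applied to $PN\unlhd G$ with Sylow $p$-subgroup $P$ yields $G=PN\cdot N_G(P)=N\cdot N_G(P)$.

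The point requiring the most care is the uniqueness of $N$, which I would handle by contradiction. Suppose $N_1\neq N_2$ are distinct minimal normal subgroups; by the previous step $PN_i\unlhd G$ for $i=1,2$, so $PN_1\cap PN_2\unlhd G$. Dedekind's modular law gives $PN_1\cap PN_2=P(N_2\cap PN_1)$, and $N_2\cap PN_1$ is normal in $G$ and contained in the minimal normal subgroup $N_2$, hence equals $1$ or $N_2$. If $N_2\leq PN_1$, then $N_2\cong N_1N_2/N_1$ embeds into the $p$-group $PN_1/N_1$, forcing $N_2$ to be a $p$-group and contradicting the first step; therefore $PN_1\cap PN_2=P$ is normal in $G$, contradicting (H2)(ii). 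Thus $N$ is unique. The principal care point throughout is ensuring that hypothesis (H2)(i) descends cleanly to quotients via the lifting of prime power order $p$-regular elements provided by part (c) of the elementary lemma; once that is in place, the remaining assertions follow from standard applications of minimality, Dedekind's identity, and the Frattini argument.
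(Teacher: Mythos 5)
Your proof is correct, and for most of the statement it follows the same route as the paper: verify that hypothesis (H2)(i) passes to quotients (your careful use of part (c) of the elementary lemma to lift $p$-regular prime power order elements is exactly the justification the paper leaves implicit behind the word ``clearly''), invoke minimality and Lemma \ref{pclos} to get $p$-decomposability of $G/N$, rule out $N$ being a $p$-group via $\op{O}_p(G)$, and finish with Frattini. The one place where you genuinely diverge is the uniqueness of $N$: the paper gets it in one line by observing that $p$-closed groups form a (saturated) formation, so that a group all of whose proper quotients are $p$-closed, but which is not itself $p$-closed, has a unique minimal normal subgroup (and trivial Frattini subgroup). Your replacement --- intersecting $PN_1$ and $PN_2$, applying Dedekind's modular law, and splitting on whether $N_2\cap PN_1$ is trivial --- is a valid and entirely elementary substitute; it avoids formation theory at the cost of a slightly longer case analysis, and it does not recover the auxiliary fact $\fra{G}=1$ that the formation argument gives for free (though that fact is not part of the statement being proved).
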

\begin{proof}
Since the hypotheses (H2)(i) are clearly inherited by quotients of $G$, and the class of all $p$-closed groups is a saturated formation, we deduce that $\fra{G}=1$ and that $G$ has a unique minimal normal subgroup, say $N$. Since $G/N$ has a normal Sylow $p$-subgroup, then  $\rad{p}{G}=1$, and so $N$ is not a $p$-group. Also this implies that $PN \unlhd G$  for each $P \in \syl{p}{G}$, and that  $G/N$ is $p$-decomposable, by Lemma~\ref{pclos}, as claimed. The last assertion follows from Frattini's argument.
\end{proof}

From now on $N$ is the unique minimal normal subgroup of $G$.
\medskip

\begin{lemma}\label{1} $G=APN=BPN$, for each $P \in \syl{p}{G}$ .
\end{lemma}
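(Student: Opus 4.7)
The plan is to argue by contradiction, using the minimality of $(G,A,B)$. Suppose $M:=APN$ is a proper subgroup of $G$; the goal is to derive a contradiction, which will force $APN=G$, and a symmetric argument will then give $BPN=G$.

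First I would equip $M$ with a factorisation inherited from $G=AB$. Given any $m\in M$, write $m=ab$ with $a\in A$, $b\in B$; then $b=a^{-1}m\in M$, so $b\in B\cap M$, and therefore $M=A(B\cap M)$. Next I would verify that $(M, A, B\cap M)$ still satisfies hypothesis (H2)(i): any $p$-regular element of prime power order $x\in A\cup (B\cap M)$ lies in $A\cup B$, so $p\nmid i_G(x)$ by assumption; and since $\ce{M}{x}=\ce{G}{x}\cap M$, the index $i_M(x)$ divides $i_G(x)$, hence $p\nmid i_M(x)$.

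Because $M<G$, we have the strict inequality $|M|+|A|+|B\cap M|<|G|+|A|+|B|$. By the minimality of $(G,A,B)$ the triple $(M, A, B\cap M)$ cannot itself be a counterexample, so $M$ must have a normal Sylow $p$-subgroup. Since $P\leq M$ and $|M|_p = |G|_p = |P|$, the subgroup $P$ is a Sylow $p$-subgroup of $M$, and so $P\unlhd M$. In particular $N\leq M\leq \no{G}{P}$, and combining with Lemma~\ref{0}, which gives $G=N\,\no{G}{P}$, we obtain $G=\no{G}{P}$, i.e.\ $P\unlhd G$, contradicting (H2)(ii). Thus $APN=G$, and the same argument with the roles of $A$ and $B$ interchanged yields $BPN=G$.

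The only genuinely delicate point is the setup: securing the factorisation $M=A(B\cap M)$ and checking that the index hypothesis is inherited by $M$. Once these are in place, the minimal counterexample assumption forces $P\unlhd M$, and Frattini's argument already encoded in Lemma~\ref{0} closes out the contradiction almost mechanically.
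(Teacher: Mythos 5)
Your overall strategy coincides with the paper's (pass to $M:=APN$, restore a factorisation $M=A(B\cap M)$, invoke minimality to force $P\unlhd M$, and close with $G=N\no{G}{P}$), but there is a genuine gap in the step where you check that the index hypothesis is inherited by $M$. The claim that ``$\ce{M}{x}=\ce{G}{x}\cap M$, hence $i_M(x)$ divides $i_G(x)$'' is false for an arbitrary subgroup: the paper's preliminary lemma asserts $i_N(x)\mid i_G(x)$ only for \emph{normal} subgroups, and $APN$ is not known to be normal in $G$. A concrete failure: $G=\Sigma_5$, $M=\Sigma_4$, $x=(12)$ gives $i_G(x)=10$ and $i_M(x)=6$, and this even kills the weaker implication you actually need, since $3\nmid 10$ while $3\mid 6$. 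The point is that $p\nmid i_G(x)$ only says that \emph{some} Sylow $p$-subgroup of $G$ lies in $\ce{G}{x}$; to conclude $p\nmid i_M(x)$ you must exhibit a Sylow $p$-subgroup of $M$ inside $\ce{M}{x}$, and nothing in your argument does that.

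The gap is repairable, and the repair is exactly the extra ingredient in the paper's proof. Since $PN\unlhd G$ by Lemma~\ref{0} and $|PN|_p=|G|_p$, every Sylow $p$-subgroup of $G$ lies in $PN\leq M$; equivalently, using $G=N\no{G}{P}$, the Sylow $p$-subgroup of $G$ centralising $x$ can be taken of the form $P^n$ with $n\in N\leq M$. Either way one gets $P^n\leq \ce{G}{x}\cap M=\ce{M}{x}$ with $P^n\in\syl{p}{M}$, whence $p\nmid i_M(x)$. With that fix inserted, the rest of your argument is correct: minimality forces $P\unlhd M$, so $N\leq M\leq\no{G}{P}$ and $G=N\no{G}{P}=\no{G}{P}$, contradicting (H2)(ii). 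Indeed your endgame is slightly more direct than the paper's, which instead passes through Lemma~\ref{pclos} to deduce that $T=APN$, and hence $N$, is $p$-decomposable before reaching the same contradiction.
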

 \begin{proof}
Let $P \in \syl{p}{G}$. Since $PN\unlhd G$, take for instance $T:=APN$. Let us suppose that $T<G$. Note that $T=A(T \cap B)$. If we take any $p$-regular element of prime power order $x\in A\cup (T\cap B)$, since $G=NN_{G}(P)$, then, by our hypotheses, there exists some $n\in N\leq  T$ such that $P^n\leq  \ce{G}{x}$, where $P\in\syl{p}{T}$. Whence $T$ satisfies the hypotheses and, by minimality, we deduce that $P \unlhd T$. But this means, by Lemma~\ref{pclos}, that $T$, and so $N$,  is $p$-decomposable. Since $N$ is not a $p$-group, we deduce that  $N=\rad{p'}{N}\leq \rad{p'}{T}\leq \ce{G}{P}$. But then $P$ is normal in  $G=NN_{G}(P)$, a contradiction. Therefore, $G=APN$ and, analogously, $G=BPN$.
\end{proof}

\begin{lemma}\label{2}
Either $p\in\pi(A)$ or $p\in\pi(B)$. Moreover, if $X\in\{A, B\}$ and $p\in\pi(XN)$, then $G=XN$.
\end{lemma}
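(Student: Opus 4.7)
The plan. The first assertion is immediate: since $|G|$ divides $|A||B|$, we have $\pi(G) \subseteq \pi(A) \cup \pi(B)$, and $P \neq 1$ from Lemma~\ref{0} gives $p \in \pi(A) \cup \pi(B)$.

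For the second assertion, assume without loss of generality $X = A$ and $p \in \pi(AN)$. Aiming at $G = AN$, suppose for contradiction $AN < G$. From $G = AB$ and $N \unlhd G$ one obtains the factorisation $AN = A(AN \cap B)$, so setting $C := AN \cap B \leq B$, the triple $(AN, A, C)$ inherits hypothesis~\textbf{(H2)(i)} since $i_{AN}(x)$ divides $i_G(x)$ for each $x \in A \cup C \subseteq A \cup B$. As $AN < G$ forces $|AN|+|A|+|C| < |G|+|A|+|B|$, the minimality of $(G,A,B)$ implies that $(AN, A, C)$ is not a counterexample, hence $AN$ is $p$-closed; Lemma~\ref{pclos} applied to $AN$ then yields $AN = \rad{p}{AN} \times \rad{p'}{AN}$ with $\rad{p}{AN} \neq 1$ since $p \in \pi(AN)$.

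Next I would show $N$ is a $p'$-group. As $N \leq AN$ is a subgroup of a $p$-decomposable group, $N$ is itself $p$-decomposable. By Lemma~\ref{0}, $\rad{p}{G} = 1$, so $N$ is not a non-trivial $p$-group; if $N = S_1 \times \cdots \times S_k$ were non-abelian with $p \in \pi(S_1)$, the only normal $p$-subgroup of $N$ would be trivial, forcing $N \leq \rad{p'}{AN}$ --- absurd since $N$ would then contain $p$-elements. Hence $N$ is a $p'$-group, $N \leq \rad{p'}{AN}$, and therefore $\rad{p}{AN} \leq \ce{G}{N}$.

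To conclude, set $K := \rad{p}{AN}^G$. Then $K \leq \ce{G}{N}$ is normal and non-trivial in $G$, so by uniqueness of $N$ as the minimal normal subgroup, $N \leq K$. If $N$ is non-abelian, then $N = N \cap \ce{G}{N} = \ze{N} = 1$, a contradiction. If $N$ is an elementary abelian $q$-group with $q \neq p$, then $K$ is generated by $p$-elements; since $G/PN \cong \rad{p'}{G/N}$ is a $p'$-group by Lemma~\ref{0}, we have $K \leq PN$. Writing $K = P_0 N$ with $P_0 := K \cap P$ (using $P \cap N = 1$), we have $P_0 \supseteq \rad{p}{AN} \neq 1$ and $P_0 \leq \ce{G}{N}$. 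Using the normality of $K$ together with $G = N \cdot \no{G}{P}$ (Lemma~\ref{0}), one checks that both $\no{G}{P}$ (via $hP_0h^{-1} = hKh^{-1} \cap hPh^{-1} = K \cap P = P_0$) and $N$ (since it centralises $P_0$) normalise $P_0$, hence $P_0 \unlhd G$, contradicting $\rad{p}{G} = 1$. The main obstacle is the abelian case of $N$: the centraliser identity $N \leq \ce{G}{N}$ is vacuous, so one must exploit the normal closure $\rad{p}{AN}^G$, the $p'$-nature of $G/PN$, and the factorisation $G = N \cdot \no{G}{P}$ to extract a non-trivial normal $p$-subgroup of $G$.
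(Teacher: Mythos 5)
Your overall strategy coincides with the paper's: apply minimality to $H:=AN=A(AN\cap B)$, conclude that $H$ is $p$-closed, and manufacture a non-trivial normal $p$-subgroup of $G$ contradicting $\rad{p}{G}=1$. Your endgame (showing $N$ is a $p'$-group, passing to $K=\rad{p}{AN}^G\leq \ce{G}{N}$, and using $G=N\no{G}{P}$ to get $P_0\unlhd G$) is a longer but workable alternative to the paper's one-line observation that $\rad{p}{H}^G=\rad{p}{H}^P\leq P$ because $G=APN=HP$. However, there is a genuine gap at the crucial inheritance step: you justify that $(AN,A,AN\cap B)$ satisfies (H2)(i) by asserting that $i_{AN}(x)$ divides $i_G(x)$. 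That divisibility holds for \emph{normal} subgroups (part (a) of the elementary lemma), but $AN$ need not be normal in $G$. For a non-normal subgroup $H$ and $x\in H$ one has $i_H(x)=|H:H\cap \ce{G}{x}|$, which need not divide $|G:\ce{G}{x}|$; worse, the implication you actually need, namely $p\nmid i_G(x)\Rightarrow p\nmid i_H(x)$, can fail, because the Sylow $p$-subgroup of $\ce{G}{x}$ may meet $H$ in a $p$-subgroup that is not Sylow in $H$. Concretely, in $G=\op{PSL}_2(7)$ with $H\cong \Sigma_4$ and $x\in H$ an involution with $|\ce{H}{x}|=4$, one has $i_H(x)=6$ and $i_G(x)=21$, so $2$ divides $i_H(x)$ but not $i_G(x)$.

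The paper closes exactly this gap using Lemma \ref{0}: since $G=N\no{G}{P}$, every Sylow $p$-subgroup of $G$ has the form $P^n$ with $n\in N\leq AN$. Fixing $P_0:=P\cap AN\in\syl{p}{AN}$, the Sylow subgroup $P^n\leq\ce{G}{x}$ supplied by the hypothesis then meets $AN$ in $P_0^{n}$, a full Sylow $p$-subgroup of $AN$ centralising $x$, which is what yields $p\nmid i_{AN}(x)$. Since you already invoke $G=N\no{G}{P}$ in your final paragraph, the repair is within reach, but as written the inheritance step rests on a false general principle and the proof does not go through.
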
 
\begin{proof}
The first assertion is clear since $G=AB$ and $p \in \pi(G)$.

Without loss of generality, let us assume that $p\in\pi(AN)$. Consider $1\neq P_0\in\syl{p}{AN}$ and take $P \in \syl{p}{G}$ with $P_0\leq  P$, that is, $P_0=P\cap AN$.

Set $H:=AN$ and observe that $H=A(H \cap B)$. Note that for each $p$-regular element of prime power order $x\in A \cup(H\cap B)$,  there exists some $n\in N$ with $P^n\leq  \ce{G}{x}$. Hence $x\in \ce{H}{P^n}\leq \ce{H}{P_0^{n}}$ with $n\in N$, so $i_H(x)$ is not divisible by $p$. If $H<G$, then, by minimality, we deduce that $1\neq \rad{p}{H}=P_0=P\cap AN\leq  AN=H$. Thus $\rad{p}{H}^G=\rad{p}{H}^P$ because $G=APN$, by Lemma~\ref{1}. It follows that $1\neq \rad{p}{H}^G\leq  P$, and so $\rad{p}{G}\neq 1$, a contradiction.
\end{proof}

\begin{lemma}\label{3} If $p\in\pi(N)$, then $G=AN=BN=AB$ and $N$ is a  non-abelian simple group.  Hence $N \unlhd G \leq \aut{N}$, i.e. $G$ is an almost simple group.

\end{lemma}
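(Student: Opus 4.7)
My plan is as follows. First I would deduce $G=AN$ and $G=BN$ directly from Lemma~\ref{2}: since $N\leq AN$ and $N\leq BN$, we have $p\in\pi(N)\subseteq\pi(AN)\cap\pi(BN)$, so the second assertion of Lemma~\ref{2} applies on each side, and $G=AB$ is the standing hypothesis. To rule out that $N$ is abelian, note that a minimal normal abelian $N$ is elementary abelian of some prime exponent $q$; then $p\in\pi(N)=\{q\}$ forces $q=p$, so $N\leq\rad{p}{G}$, contradicting $\rad{p}{G}=1$ from Lemma~\ref{0}.

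The crux is that $N$ is simple. Write $N=S_1\times\cdots\times S_k$ with $S_i$ pairwise isomorphic non-abelian simple groups, and assume $k\ge 2$ for contradiction. The conjugation action of $G$ on $\Omega=\{S_1,\ldots,S_k\}$ is transitive by minimality of $N$ and contains $N$ in its kernel; since $G=AN$, the subgroup $A$ also acts transitively on $\Omega$. Decomposing $G/N=X\times Y$ with $X=PN/N$ a $p$-group and $Y=\rad{p'}{G/N}$, the commuting action of $X$ and $Y$ on $\Omega$ forces all $Y$-orbits to have a common size $m$ and the set of $Y$-orbits to be a single $X$-orbit of $p$-power size, so $k=p^{a}m$ with $(m,p)=1$.

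If $m\ge 2$, I apply Lemma~\ref{FKS} to the transitive action of $Y$ on a single $Y$-orbit to get a prime-power order $y\in Y$ fixed-point-free on that orbit, hence on all of $\Omega$ by the commuting structure. Using the surjection $A\to G/N$ coming from $G=AN$ and the lifting part (c) of the elementary lemma in Section~2, I choose a prime-power order $p$-regular $a\in A$ with $aN=y$. The hypothesis gives $P^{g}\leq\ce{G}{a}$ for some $g\in G$, and by the Frattini argument $G=N\cdot\no{G}{P}$ (Lemma~\ref{0}) one may take $g\in N$; since $N$ fixes each $S_i$, the decomposition $P^{g}\cap N=Q_1'\times\cdots\times Q_k'$ has each $Q_i'\in\syl{p}{S_i}$ non-trivial (as $p\in\pi(S_i)$). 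The element $a$ centralises each $Q_i'$, but it induces a fixed-point-free permutation $\tau$ of the indices, so $Q_i'=(Q_i')^{a}\leq S_i\cap S_{\tau(i)}=1$, a contradiction.

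The main obstacle is the case $m=1$: then $Y$ acts trivially on $\Omega$, $k=p^{a}\geq p$, and every $p$-regular element of $A\cup B$ acts trivially on $\Omega$ (since $p$-regular elements of $G/N$ all lie in $Y$), so no $p$-regular fixed-point-free element is available. Here a finer argument is needed, and I would attempt it by choosing a prime $r\in\pi(N)\setminus\{p\}$ and, via Lemma~\ref{1.3.2} with $\pi=\{r\}$, a Sylow $r$-subgroup $R$ of $G$ with $R=(R\cap A)(R\cap B)$; the projections of $R\cap A$ and $R\cap B$ to each simple factor $S_i$ then cover a Sylow $r$-subgroup of $S_i$, and applying the hypothesis to the $r$-elements of $A\cup B$ (together with the Frattini-plus-Sylow argument above) should force an entire Sylow $r$-subgroup of some $S_i$ to centralise a Sylow $p$-subgroup of $S_i$, contradicting non-abelian simplicity. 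Once $N$ is non-abelian simple, $\ze{N}=1$ gives $N\cap\ce{G}{N}=1$, and minimality of $N$ forces $\ce{G}{N}=1$, so $G$ embeds into $\aut{N}$ and is almost simple.
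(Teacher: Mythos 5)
Your reduction to $G=AN=BN$, the non-abelianness of $N$, and your treatment of the case in which some $p$-regular element of prime power order in $A\cup B$ moves a simple factor are all sound; that last part is in substance the paper's own first sub-argument (the paper does not need Lemma~\ref{FKS} there: it simply takes such an element $x$ with $N_1^x=N_i$, $i>1$, and derives $1\neq P\cap N_1=P\cap N_i$, the same contradiction you reach, so your detour through the decomposition $G/N=X\times Y$ and a fixed-point-free element is a correct but more elaborate way of producing such an $x$). The concluding step $\ce{G}{N}=1$ is also fine.

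The genuine gap is your case $m=1$, i.e.\ when every $p$-regular element of prime power order in $A\cup B$ normalises each $S_i$. First, the endgame is a non sequitur: a Sylow $r$-subgroup of a non-abelian simple group can perfectly well centralise a Sylow $p$-subgroup (in $L_2(29)$, for instance, a Sylow $3$-subgroup and a Sylow $5$-subgroup both lie in a cyclic torus of order $15$ and hence commute), so even if you established the claimed centralising property you would have no contradiction; ruling out such configurations is precisely what the CFSG-based analysis of Section~5 is for, and it cannot be dispatched in one line at this stage. Second, the intermediate step is also unjustified: the hypothesis only gives that each individual $r$-element of $A\cup B$ centralises \emph{some} conjugate of $P$, with the conjugating element varying from one element to the next, so you cannot conclude that an entire Sylow $r$-subgroup of some $S_i$ centralises one fixed Sylow $p$-subgroup. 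The paper's handling of this case is different and purely group-theoretic: putting $R=\bigcap_i\no{G}{N_i}$, the quotient $A/(A\cap R)$ has no non-trivial $p$-regular elements of prime power order (any such element lifts, by part (c) of the elementary lemma, to one of $A$, which lies in $A\cap R$), hence $A/(A\cap R)$ is a $p$-group and $A=A_p(A\cap R)$, and similarly for $B$; therefore $G=PR$, so $P$ itself acts transitively on the $k>1$ factors, which is then contradicted using $1\neq Z(P)\cap N\leq \ce{N}{P}$. You should replace your $m=1$ argument by one of this kind.
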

 \begin{proof}
The first assertion follows from Lemma~\ref{2}. 
 Since $\rad{p}{G}=1$ and  $p\in\pi(N)$,  certainly  $N$ is non-abelian.  Set $N=N_1\times N_2\times \cdots \times N_r$ with $N_i \cong N_1$ a non-abelian simple group, for $i=2, \ldots, r$, and assume that $r >1$. 

Since $G=AN=BN$, both $A$ and $B$ act transitively by conjugation on the set $\Omega=\{N_1,  \ldots, N_r \}$. 

Suppose first that there exists some $p$-regular element of prime power order  $1 \neq x \in A \cup B$ such that $N_1^x=N_i$, for some $i >1$. By the hypotheses, there exists some $P \in \syl{p}{G}$ such that $P\leq \ce{G}{x}$. Moreover, $1 \neq P\cap N\in\syl{p}{N}$, and so $1\neq P\cap N_1\in\syl{p}{N_1}$. It follows that $P\cap N_1=(P\cap N_1)^x=P\cap N_1^x=P\cap N_i$, therefore  $1 \neq N_1 \cap N_i$, a contradiction. 

Hence, we may assume that any $p$-regular  element of prime power order in $A \cup B$ normalises $N_1$, and hence $N_i$ for $i=2, \ldots, r$, since $A$ and $B$ both act transitively on  $\Omega$. But this means that if $R:= \cap_{i=1}^{r} N_G(N_i)$, then $G=A_pR=B_pR$, for any $A_p \in \syl{p}{A}$ and $B_p \in \syl{p}{B}$. Therefore $G=PR$ for any $P \in \syl{p}{G}$, and so $P$ acts transitively on $\Omega$. But this contradicts the fact that $1 \neq  Z(P)  \cap N \leq C_N(P)=\ce{N_1}{P}\times \cdots \times \ce{N_r}{P}$, unless $r=1$. Therefore $N$ is a simple group as claimed. 
\end{proof}

In the next section the case when $N$ is a $p'$-group will be discarded. Hence, by Lemma \ref{3}, the minimal counterexample to our Main Theorem will be an almost simple group $G$, $N \unlhd G \leq \aut{N}$, with $p \in \pi(N)$ and $G=AB=AN=BN$. 

In Section 5 we will analyse such almost simple groups satisfying the hypotheses of our Main Theorem, and all possible cases for the simple group appearing as the socle of such a group will be ruled out. 

\subsection{Case $N$ is a $p'$-group.}


Assume from now on in this section that $N$ is a $p'$-group. Note that, in particular,  $G/N$ is $p$-decomposable, by Lemma~\ref{pclos}, and so $G$ is $p$-separable. 

\begin{lemma}\label{4} $G=\rad{p'}{G}\langle y\rangle=N\ce{G}{y}$, where $1 \neq y \in A$ and  $ \langle y \rangle \in \syl{p}{G}$. Further, $B$ is a $p'$-group. 
\end{lemma}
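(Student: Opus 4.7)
The plan is to first show, using the minimality of $(G,A,B)$, that $B$ must be a $p'$-group, and then produce $y\in A$ by showing that the Sylow $p$-subgroup of $G$ is forced to be cyclic. Since $G/N$ is $p$-decomposable by Lemma~\ref{0} and $N$ is a $p'$-group, $G$ is $p$-nilpotent with normal $p$-complement $L:=\rad{p'}{G}$, which is the unique Hall $p'$-subgroup of $G$. Applying Lemma~\ref{1.3.2} we factorise $L=A_0 B_0$ with $A_0:=L\cap A$ and $B_0:=L\cap B$, which are the normal $p$-complements of $A$ and $B$ respectively.

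If $p\mid |B|$, then Lemma~\ref{2} yields $G=AN=BN$, whence $|A|_p=|B|_p=|G|_p=:|P|$ and hence $|A\cap B|_p=|P|$. A direct size computation shows $|A\cdot B_0|=|G|$, i.e.\ $G=A\cdot B_0$; since $B_0\lneq B$, the triple $(G,A,B_0)$ would be a counterexample with strictly smaller sum, contradicting minimality. Therefore $B$ is a $p'$-group, $B\le L$, and $p\in\pi(A)$. By Lemma~\ref{2} again, $G=AN$, so $A$ contains a Sylow $p$-subgroup $P$ of $G$. A size count now gives $A_0 B=L$, and Lemma~\ref{0} yields $[G,P]\le N$; in particular $[P,P]\le P\cap N=1$, so $P$ is abelian.

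The main obstacle is to show that $P$ is cyclic. Given any $y\in P$, set $A_1:=\langle y\rangle A_0\le A$ and $H:=\langle y\rangle L$. From $A_0 B=L$ we get $A_1 B=\langle y\rangle L=H$. To check that $(H,A_1,B)$ inherits (H2)(i): the $p$-regular prime power order elements of $A_1\cup B$ lie in $A_0\cup B\subseteq L$, and for any such $x$ the original hypothesis supplies a Sylow $p$-subgroup $P^g\le\ce{G}{x}$ which, since $G=PL$, may be taken with $g\in L$; using $[P,L]\le N\le L$ one checks that $\langle y\rangle^g\le H$, so $\langle y\rangle^g$ is a Sylow $p$-subgroup of $H$ inside $\ce{H}{x}$. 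If moreover $\langle y\rangle\lneq P$, then $|H|+|A_1|+|B|<|G|+|A|+|B|$, and minimality forces $H$ to be $p$-closed; its Sylow $p$-subgroup is $\langle y\rangle$, and its normality in $H$ amounts to $L\le \no{G}{\langle y\rangle}$. Combining $[L,y]\le [L,P]\le N$ with $\langle y\rangle\cap N=1$ gives $[L,y]=1$, i.e.\ $y\in \ce{G}{L}$; together with $y\in P\le \ce{G}{P}$ (since $P$ is abelian) this yields $y\in \ze{G}$. Thus every $y\in P$ either generates $P$ or lies in $\ze{G}$; if $P$ were non-cyclic, $P\le \ze{G}$ would force $P\unlhd G$, contradicting (H2)(ii). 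Hence $P$ is cyclic, and we take $y\in A$ with $\langle y\rangle=P\in\syl{p}{G}$.

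Finally, $G=PL=L\langle y\rangle=\rad{p'}{G}\langle y\rangle$. For the identity $G=N\,\ce{G}{y}$: since $PN/N$ is a direct factor of $G/N$, $yN$ is central in $G/N$, so $L\le \ce{G}{y}\cdot N$, i.e.\ $L=\ce{L}{y}\cdot N$; using $\ce{G}{y}=P\times \ce{L}{y}$ (because $P$ is abelian) a size count yields $|N\,\ce{G}{y}|=|G|$, giving $G=N\,\ce{G}{y}$.
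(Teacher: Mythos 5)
Your overall strategy is essentially the paper's: both arguments apply minimality to subgroups of the form $\rad{p'}{G}\langle y\rangle$ for $p$-elements $y$ lying in a factor, and both finish with the coprime-action decomposition $\rad{p'}{G}=[\rad{p'}{G},y]\,\ce{\rad{p'}{G}}{y}\leq N\ce{G}{y}$. (The paper runs a single loop over all $y\in(P\cap A)\cup(P\cap B)$, with $P=(P\cap A)(P\cap B)$ supplied by Lemma~\ref{1.3.2}: for each such $y$ either $[y,\rad{p'}{G}]=1$ or $\rad{p'}{G}\langle y\rangle=G$, the first alternative cannot hold for all $y$, and the second forces $\langle y\rangle\in\syl{p}{G}$ and, by minimality of $|A|+|B|$, $B=\rad{p'}{G}\cap B$. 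Your separate preliminary reduction of $B$ to a $p'$-group via the smaller triple $(G,A,L\cap B)$ is a legitimate variant.) But one step is wrong as written: Lemma~\ref{0} does \emph{not} give $[G,P]\leq N$. The decomposition $G/N=PN/N\times \rad{p'}{G/N}$ only yields $[\rad{p'}{G},P]\leq N$; since $G=P\rad{p'}{G}$ one gets $[G,P]N=P'N$, so the assertion $[G,P]\leq N$ is equivalent to $P'=1$ --- i.e.\ to the very conclusion ``$P$ is abelian'' that you draw from it. The argument is circular at that point, and everything you later justify by ``because $P$ is abelian'' is, as written, unsupported.

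Fortunately the error is local and repairable without new ideas. In the cyclicity argument you do not need $P$ abelian or any appeal to $\ze{G}$: if $P$ were non-cyclic, then every $y\in P$ generates a proper subgroup, so your $H=\langle y\rangle L$ argument gives $[L,y]\leq\langle y\rangle\cap N=1$ for every $y\in P$, whence $[L,P]=1$ and $P\unlhd LP=G$, contradicting (H2)(ii) directly. Once $P=\langle y\rangle$ is known to be cyclic it is of course abelian, so the final computation $\ce{G}{y}=\langle y\rangle\times\ce{L}{y}$ and the size count for $G=N\ce{G}{y}$ go through (note that the step $L\leq\ce{G}{y}N$ is exactly the coprime-action identity $L=[L,y]\ce{L}{y}$ with $[L,y]\leq N$, which you should invoke explicitly). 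One further minor point: deducing $G=AN$, and hence $|A|_p=|G|_p$, from $p\mid|B|$ via Lemma~\ref{2} presupposes $p\in\pi(A)$; the clean statement is that $p$ cannot divide both $|A|$ and $|B|$ (your $(G,A,L\cap B)$ argument shows exactly this), after which one relabels so that $p\in\pi(A)$ --- the same implicit normalisation the paper makes with its ``for instance''.
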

 \begin{proof}
Recall that $G/N=PN/N \times O_{p'}(G/N)$, for $P \in \syl{p}{G}$. Now since $N$ is a $p'$-group, it follows that $H:=\rad{p'}{G}\in\hall{p'}{G}$, so it is the unique  Hall $p'$-subgroup of $G$ and $H= (H \cap A)(H \cap B)$, by Lemma~\ref{1.3.2}. We may consider $P=(P\cap A)(P\cap B)$. For some $p$-element $y\in (P\cap A)\cup (P\cap B)$, set $H_y:=\rad{p'}{G}\langle y \rangle$. Note that $H_y= (H_y \cap A)(H_y \cap B)$. Now if $x\in (H_y\cap A)\cup (H_y \cap B)$ is a $p$-regular element  of prime power order, then there exists some $n\in N$ with $\langle y \rangle^n\leq  P^n\leq  \ce{G}{x}$, so $x\in\ce{H_y}{\langle y\rangle^n}$. As $\langle y\rangle^n$ is a Sylow $p$-subgroup of $H_y$ because $n\in N\leq  \rad{p'}{G}$, then $H_y$ satisfies the hypotheses (H2). If $\abs{H_y}<\abs{G}$, by minimality we obtain that $H_y$ has a normal Sylow $p$-subgroup, and so $[y, \rad{p'}{G}]=1$. If this holds for every $y\in (P\cap A)\cup (P\cap B)$, then $[P, \rad{p'}{G}]=1$, a contradiction. Hence we may suppose that, for instance, there exists $y\in P\cap A$ with $H_y=\rad{p'}{G}\langle y \rangle=G$. Further, since we are assuming that $\abs{A}+\abs{B}$ is minimal, then we deduce that $A=(\rad{p'}{G}\cap A)\langle y \rangle$ and $B=\rad{p'}{G}\cap B$.

Now, by coprime action and minimality,  $\rad{p'}{G}=[\rad{p'}{G}, y]\ce{\rad{p'}{G}}{y}\leq  N\ce{G}{y}.$ Thus $G=\rad{p'}{G}\langle y\rangle = N\ce{G}{y}$.
\end{proof}

\begin{lemma}\label{ncapa}
$N \cap A \neq 1$. 
\end{lemma}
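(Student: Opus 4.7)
I would argue by contradiction. Assume $N\cap A=1$. The plan is to show that this forces $A\leq \ce{G}{y}$, then exploit the factorisation $G=AB$ to make $B$ act transitively on the conjugacy class $y^G$, and finally invoke the Fein--Kantor--Stroth type Lemma~\ref{FKS} to contradict the hypothesis on prime power order elements of $B$.

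The first step is to show $A\leq \ce{G}{y}$. By Lemma~\ref{0} and the choice of $y\in A$, we have $G/N=\langle yN\rangle\times \rad{p'}{G}/N$, so in $G/N$ the element $yN$ commutes with every element of $\rad{p'}{G}/N$. Hence $[y,x]\in N$ for every $x\in A\cap \rad{p'}{G}$. Since $y,x\in A$, the commutator $[y,x]$ also lies in $A$, so $[y,x]\in A\cap N=1$. Thus $A\cap \rad{p'}{G}\leq \ce{G}{y}$, and since $A=\langle y\rangle(A\cap \rad{p'}{G})$ by (the proof of) Lemma~\ref{4}, we conclude $A\leq \ce{G}{y}$.

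The second step is the transitivity. From $G=AB$ and $A\leq \ce{G}{y}$ it follows that $G=\ce{G}{y}B$. Combined with $G=N\ce{G}{y}$ from Lemma~\ref{4}, we get $|B|/|B\cap\ce{G}{y}|=|N|/|\ce{N}{y}|=|G:\ce{G}{y}|=|y^G|$. Since $B\cap \ce{G}{y}=\ce{B}{y}$ and $y^B\subseteq y^G$, this forces $y^B=y^G$, so $B$ acts transitively on $y^G$ by conjugation. Note also that $|y^G|>1$: otherwise $\langle y\rangle$ would be central, hence normal in $G$, contradicting hypothesis (H2)(ii).

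The final step applies Lemma~\ref{FKS} to this transitive action: there exists a prime power order element $b\in B$ acting fixed-point-freely on $y^G$, meaning $b\notin \ce{G}{y^g}=\ce{G}{y}^g$ for every $g\in G$. However, $b$ is $p$-regular (since $B$ is a $p'$-group by Lemma~\ref{4}) and has prime power order, so by hypothesis (H2)(i) some Sylow $p$-subgroup $\langle y\rangle^g$ is contained in $\ce{G}{b}$, which yields $b\in \ce{G}{y^g}$, the desired contradiction. The main subtlety is recognising that the direct factor decomposition of $G/N$ produced by Lemma~\ref{pclos}, together with $A\cap N=1$, upgrades the commutator condition modulo $N$ to an actual centralising condition inside $A$; everything else is a clean application of the tools already developed.
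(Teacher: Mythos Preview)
Your proof is correct and follows essentially the same route as the paper's own argument. Both proofs first use $N\cap A=1$ together with the $p$-decomposability of $G/N$ to conclude that $A$ normalises (in fact centralises) $\langle y\rangle$; then both invoke the Fein--Kantor--Schacher result to reach a contradiction. The only cosmetic difference is that the paper packages the final step via Lemma~\ref{feinkantor} applied to the subgroup $\ce{B}{\langle y\rangle}\leq B$, whereas you apply Lemma~\ref{FKS} directly to the transitive action of $B$ on $y^G$; these are two phrasings of the same argument. (Your intermediate index computation $|B|/|B\cap\ce{G}{y}|=|y^G|$ is correct but unnecessary: from $G=\ce{G}{y}B$ one gets $y^G=y^B$ immediately.)
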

\begin{proof}
Assume that  $N\cap A=1$.  By Lemma~\ref{1}, we know that $G/N=\langle y \rangle N/N \times \rad{p'}{G}/N$. Hence $[\langle y \rangle, \rad{p'}{G}\cap A]\leq  N\cap A=1$, so $\langle y\rangle$ is  a Sylow $p$-subgroup of $G$ which is normal  in $A$.
Now, since $B$ is a $p'$-group, we have that  for any $b \in B$ of prime power order, there exists $g \in G=AB$ such that $\langle y\rangle^g  \leq C_G(b)$. This implies that  $\langle y\rangle^{b_1}  \leq C_G(b)$ for some $b_1 \in B$, since $\langle y\rangle^a=\langle y\rangle$ for any $a \in A$. It follows that each element of prime power order of $B$ lies in $\underset{x\in B}{\cup}\ce{B}{\langle y\rangle}^x$ and so, by Lemma \ref{feinkantor}, we deduce $[B, \langle y\rangle]=1$, a contradiction which proves our claim.
\end{proof}

\begin{lemma}\label{5}  $N$ is a non-abelian group, so $N=N_1\times N_2\times \cdots \times N_r$, with $N_i \cong N_1$ a non-abelian simple group.
\end{lemma}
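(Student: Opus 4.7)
The plan is to argue by contradiction: suppose $N$ is abelian. Then $N$ is an elementary abelian $q$-group for some prime $q\neq p$, and once this case is eliminated, the claimed description of $N$ as a direct product of isomorphic non-abelian simple groups is the standard structure theorem for a minimal normal subgroup.

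The key move is to exploit the $p$-element $y$ from Lemma~\ref{4}. Coprime action of $\langle y\rangle$ on the abelian $p'$-group $N$ yields the direct decomposition $N=[N,y]\times \ce{N}{y}$. A crucial point is that both summands are normal in $G$: $N$ normalises each factor because $N$ is abelian, while the identity $[n,y]^k=[n^k,y^k]=[n^k,y]$ for $k\in\ce{G}{y}$ shows that $\ce{G}{y}$ normalises them too, and since $G=N\ce{G}{y}$ by Lemma~\ref{4}, $G$-invariance follows. By the minimality of $N$ each factor is either trivial or the whole of $N$. If $\ce{N}{y}=N$, then $y$ centralises $N$ and hence commutes with $N\ce{G}{y}=G$, so $\langle y\rangle\le Z(G)\le \rad{p}{G}=1$, contradicting $p\in\pi(G)$. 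Therefore $\ce{N}{y}=1$, and every $G$-conjugate of $y$ acts fixed-point-freely on $N$, since $\ce{N}{y^g}=\ce{N}{y}^g$.

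To finish, I would use Lemma~\ref{ncapa} to pick $1\neq a\in N\cap A$. As $N$ is elementary abelian of exponent $q\neq p$, the element $a$ is a $p$-regular element of prime power order lying in $A$, and hypothesis (H2)(i) then yields some $g\in G$ with $\langle y\rangle^g\le \ce{G}{a}$; but then $a\in \ce{N}{y^g}=\ce{N}{y}^g=1$, a contradiction. The only step requiring care is the $G$-invariance of $[N,y]$ and $\ce{N}{y}$, which a priori is not obvious, since $y$ need not be central and such subgroups are automatically stable only under $N\langle y\rangle$; here the factorisation $G=N\ce{G}{y}$ supplied by Lemma~\ref{4} is precisely the ingredient that makes the argument go through.
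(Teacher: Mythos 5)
Your proof is correct and follows essentially the same route as the paper: both show that $\ce{N}{y}$ is normal in $G=N\ce{G}{y}$, invoke the minimality of $N$ to force $\ce{N}{y}\in\{1,N\}$, and dispose of each case exactly as you do (the paper simply works with $\ce{N}{y}$ directly rather than through the coprime-action decomposition $N=[N,y]\times\ce{N}{y}$). One cosmetic point: the inclusion $\ze{G}\leq \rad{p}{G}$ need not hold in general; what you actually need is that $\langle y\rangle$ is a central, hence normal, $p$-subgroup, so $\langle y\rangle\leq\rad{p}{G}=1$.
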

\begin{proof}
 Assume that $N$ is abelian. Therefore $\ce{N}{y}\leq  N$ is a normal subgroup of $G=N\ce{G}{y}$. Since $N$ is a minimal normal subgroup of $G$,  we deduce that either $N=\ce{N}{y}$ or  $\ce{N}{y}=1$. The first case yields to the contradiction $G=\ce{G}{y}$. So we may assume $\ce{N}{y}=1$. If we take $1 \neq x\in N\cap A$ of prime power order (which is a $p$-regular element) then, by our hypotheses, there exists some $n\in N$ such that $\langle y\rangle^n\leq  \ce{G}{x}$, and   so $x \in \ce{N}{\langle y\rangle^n}=(\ce{N}{\langle y\rangle}^n$, a contradiction. 
\end{proof}

\begin{lemma}\label{xi}
$\abs{N}\abs{\langle y\rangle}\abs{A\cap B}=\abs{\frac{G}{N}}\abs{N\cap A}\abs{N\cap B}.$
\end{lemma}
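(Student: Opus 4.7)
The plan is to reduce the stated identity to the single assertion $NB = \rad{p'}{G}$. Writing $H := \rad{p'}{G}$, Lemma~\ref{4} tells us $B\leq H$, so Lemma~\ref{1.3.2} gives $H = (H\cap A)B$; also $y\in A$ is a $p$-element, so Lemma~\ref{2} yields $G = AN$. Combining the product formulas for the two factorisations $G = AB$ and $G = AN$ yields the preliminary identity $|N||A\cap B| = |N\cap A||B|$. Once $NB = H$ is known, $|G/N| = |H||\langle y\rangle|/|N| = |B||\langle y\rangle|/|N\cap B|$, and substituting this into the right-hand side of the claimed equality, together with the preliminary identity, gives the left-hand side by a routine cancellation.

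To prove $NB = H$, I would start by showing that $\langle y\rangle$ normalises $NB$. Since $G/N$ is $p$-decomposable by Lemma~\ref{pclos} with $p$-part $\langle y\rangle N/N$ appearing as a direct factor, $\langle y\rangle$ is central modulo $N$, hence $[G, y]\leq N$; in particular $yBy^{-1}\leq NB$. Therefore $G_1 := NB\langle y\rangle$ is a subgroup of $G$. Two applications of Dedekind's modular law -- first with $\langle y\rangle \leq A$, then with $B\leq NB$ inside $G = AB$ -- show that $G_1\cap A = \langle y\rangle(NB\cap A)$ and that $G_1 = (G_1\cap A)B$ is a factorisation.

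The main technical step is to verify that the triple $(G_1, G_1\cap A, B)$ inherits hypothesis (H2)(i), so that the minimality of $(G, A, B)$ can be invoked. Given a $p$-regular element of prime power order $x\in (G_1\cap A)\cup B$, the hypothesis on $G$ provides $g\in G$ with $\langle y\rangle^g\leq \ce{G}{x}$; equivalently, $gxg^{-1}\in \ce{G}{y}$. Using $G = N\ce{G}{y}$ from Lemma~\ref{4}, I would write $g^{-1} = n_1 c_1$ with $n_1\in N$ and $c_1\in \ce{G}{y}$. Since $c_1$ normalises $\ce{G}{y}$, this gives $x^{n_1}\in \ce{G}{y}$, equivalently $\tilde y := n_1 y n_1^{-1}$ centralises $x$. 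As $n_1\in N\leq G_1$, the subgroup $\langle\tilde y\rangle$ is a Sylow $p$-subgroup of $G_1$ lying inside $\ce{G_1}{x}$, so $p\nmid i_{G_1}(x)$.

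If $G_1 < G$, then $|G_1|+|G_1\cap A|+|B| < |G|+|A|+|B|$, so by minimality $G_1$ must have a normal Sylow $p$-subgroup, necessarily $\langle y\rangle$. Then $G_1 = NB\times\langle y\rangle$, so $y$ centralises $N$; consequently $G = N\ce{G}{y} = \ce{G}{y}$, forcing $\langle y\rangle\trianglelefteq G$ in contradiction with $\rad{p}{G} = 1$ from Lemma~\ref{0}. Hence $G_1 = G$, giving $NB = H$, and the desired identity follows from the reduction in the first paragraph.
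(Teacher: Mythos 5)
Your proposal is correct, and the arithmetic reduction at the start and end is essentially the computation the paper performs; but the bulk of your argument re-derives a fact that is already on the table. The paper's proof is a two-line affair: by Lemma~\ref{4} the subgroup $B$ is a $p'$-group, so Lemma~\ref{2} gives $G=AN$, and Lemma~\ref{1} gives $G=BPN=B\langle y\rangle N$; since $B\cap \langle y\rangle N$ is a $p'$-subgroup of $\langle y\rangle N$ and hence lies in $N$, one gets $\abs{B\cap\langle y\rangle N}=\abs{B\cap N}$, and the order formulas for $G=AN$, $G=AB$ and $G=B\langle y\rangle N$ combine to give the identity. Your key intermediate claim $NB=\rad{p'}{G}$ is exactly equivalent to $G=B\langle y\rangle N$: since $NB$ and $\rad{p'}{G}$ are both $p'$-groups and $\rad{p'}{G}$ is the unique Hall $p'$-subgroup here, $NB\leq\rad{p'}{G}$ always, and Lemma~\ref{1} forces $G=B\langle y\rangle N\subseteq NB\langle y\rangle$, whence $G_1=G$ and $\abs{NB}=\abs{G}/\abs{\langle y\rangle}=\abs{\rad{p'}{G}}$ with no further work. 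Your minimality argument for $G_1$ (checking that $(G_1,G_1\cap A,B)$ inherits (H2)(i) via $G=N\ce{G}{y}$, then deriving $[y,N]=1$ and the contradiction $G=\ce{G}{y}$) is sound as far as I can see --- it is in effect a re-run of the proof of Lemma~\ref{1} for the particular overgroup $NB\langle y\rangle$ --- but it buys nothing over a direct appeal to that lemma. So: correct, same counting, with an avoidable detour in the middle.
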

\begin{proof}
Recall that $p\in\pi(AN)\smallsetminus\pi(BN)$ and $G=AN=BPN$ for any $P \in \syl{p}{G}$. Hence $G=AN=B\langle y\rangle N$. Now it is enough to make some computations having in mind that 
$\abs{B\cap \langle y\rangle N}=\abs{B\cap N}$ (recall that $G$ is $p$-separable).
\end{proof}

\begin{lemma}\label{7} The Sylow $p$-subgroups of $G$ are cyclic of order $p$, i.e. $\langle y\rangle \cong C_p$.
\end{lemma}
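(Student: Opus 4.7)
The plan is to argue by contradiction: I will assume $|y|=p^{k}$ with $k\geq 2$ and apply the minimality of the counterexample $(G,A,B)$ to the subgroup
\[
M := \rad{p'}{G}\langle y^{p}\rangle,
\]
in order to force $y^{p}$ to centralise $N$ and thereby contradict $\ce{G}{N}=1$.

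The first step is to show that $(M, M\cap A, M\cap B)$ still satisfies (H1)--(H2)(i) and has strictly smaller total size than $(G,A,B)$. Since $B\leq \rad{p'}{G}$ by Lemma~\ref{4}, we have $M\cap B=B$; combining this with $A=(\rad{p'}{G}\cap A)\langle y\rangle$ gives $M\cap A=(\rad{p'}{G}\cap A)\langle y^{p}\rangle$, so $[G:M]=p$ and $|M|<|G|$. The factorisation $M=(M\cap A)(M\cap B)$ then follows by a routine cardinality check based on $\rad{p'}{G}=(\rad{p'}{G}\cap A)B$, which in turn is a direct consequence of $G=AB$ together with $B\leq \rad{p'}{G}$. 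To verify (H2)(i) for $M$, let $x$ be a $p$-regular element of prime power order in $(M\cap A)\cup(M\cap B)\subseteq A\cup B$; by the hypothesis on $G$ some Sylow $p$-subgroup $P\in\syl{p}{G}$ lies in $\ce{G}{x}$, and since $[G:M]=p$ forces $[P:P\cap M]\leq p$, the subgroup $P\cap M$ has order exactly $|M|_{p}=p^{k-1}$ and is therefore a Sylow $p$-subgroup of $M$ contained in $\ce{M}{x}$, so $p\nmid i_{M}(x)$.

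By the minimality of our counterexample, $(M, M\cap A, M\cap B)$ cannot itself be a counterexample, so (H2)(ii) must fail for $M$: that is, $M$ has a normal Sylow $p$-subgroup, necessarily $\langle y^{p}\rangle$. Since $\rad{p'}{G}\unlhd M$ is a $p'$-group, this yields
\[
[\langle y^{p}\rangle,\rad{p'}{G}] \leq \langle y^{p}\rangle \cap \rad{p'}{G} = 1,
\]
so $y^{p}\in\ce{G}{\rad{p'}{G}}\leq\ce{G}{N}$. Finally, by Lemma~\ref{5} we have $N=N_{1}\times\cdots\times N_{r}$ with non-abelian simple factors, so $\ze{N}=1$; and since $N$ is the unique minimal normal subgroup of $G$ (Lemma~\ref{0}), any nontrivial $\ce{G}{N}\unlhd G$ would contain a minimal normal subgroup of $G$, forced to equal $N$, contradicting $N\cap\ce{G}{N}\leq\ze{N}=1$. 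Hence $\ce{G}{N}=1$, so $y^{p}=1$, contradicting $|y|\geq p^{2}$. We conclude $|y|=p$.

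I expect the most delicate point to be the verification of (H2)(i) for $M$: although $M$ need not be normal in $G$, the index $[G:M]=p$ is enough to guarantee that $P\cap M$ has order exactly $|M|_{p}$, making it a Sylow $p$-subgroup of $M$. Once this inheritance of the hypotheses is secured, the remainder of the argument is a routine combination of the minimality assumption with the standard fact that $\ce{G}{N}=1$ whenever $N$ is a unique non-abelian minimal normal subgroup.
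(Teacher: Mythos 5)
Your proof is correct and follows essentially the same strategy as the paper: the paper applies minimality to $H=\rad{p'}{G}\langle x\rangle$ with $x$ the element of order $p$ in $\langle y\rangle$ and reaches a contradiction with $\rad{p}{G}=1$ via the normal closure $\rad{p}{H}^G=\rad{p}{H}^{\langle y\rangle}\leq \langle y\rangle$, whereas you apply it to the index-$p$ subgroup $\rad{p'}{G}\langle y^{p}\rangle$ and contradict $\ce{G}{N}=1$. Both variants are sound, and your verification that the hypotheses are inherited (in particular the factorisation of $M$ and the Sylow intersection argument) is complete.
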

 \begin{proof}
Take $x\in \langle y\rangle$ of order $p$, and set $H:=BN\langle x\rangle$; this is a subgroup of $G$ since $BN=\rad{p'}{G}\unlhd G$. Assume that $H=(H \cap A) B<G$.  Now if $h\in (H\cap A)\cup B$ is $p$-regular of prime power order, by the hypotheses there exists $n\in N$ with $\langle x\rangle ^n \leq \langle y\rangle^n \leq  \ce{G}{h}$, so $h\in \ce{H}{\langle x\rangle^n}$, where $\langle x\rangle^n\in\syl{p}{H}$. By minimality, $\langle x\rangle=\rad{p}{H}$, so (recall that $G=BPN$, by Lemma~\ref{1}, with $P=\langle y \rangle$) $1\neq \rad{p}{H}^G=\rad{p}{H}^{BNP}=\rad{p}{H}^P\leq  P$. This contradicts the fact $\rad{p}{G}=1$ and it proves the claim.
\end{proof}

\begin{lemma}\label{new}
	The subgroup $\langle y \rangle$ does not normalise  $N_i$, for each $i \in\{1, \ldots, r\}$. In particular, $r > 1$.
\end{lemma}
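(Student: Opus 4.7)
The plan is to argue by contradiction: suppose $\langle y\rangle$ normalises some $N_i$; without loss of generality take $N_1$. The first step is to upgrade this to $\langle y\rangle$ normalising \emph{every} $N_i$. By Lemma~\ref{4} we have $G=N\ce{G}{y}$, and since $N$ lies in the kernel of the transitive action of $G$ on $\Omega=\{N_1,\ldots,N_r\}$, the subgroup $\ce{G}{y}$ already acts transitively on $\Omega$. As $y\in\ze{\ce{G}{y}}$, the set of $\langle y\rangle$-fixed points in $\Omega$ is $\ce{G}{y}$-invariant, and being non-empty it equals $\Omega$.

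Next I determine the induced action of $y$ on each $N_i$. The uniqueness of $N$ (Lemma~\ref{0}) together with $\ze{N}=1$ forces $\ce{G}{N}=1$, so $y\neq 1$ cannot centralise $N$; the $\ce{G}{y}$-transitivity on $\Omega$ then implies $y$ acts non-trivially on every $N_j$. Moreover, since $\ze{N_i}=1$, any inner automorphism of $N_i$ has order equal to that of its realising element, which is a $p'$-integer; so if $y|_{N_i}$ were inner it would have $p'$-order, contradicting that this order divides $p$ and is non-trivial. Hence $y|_{N_i}$ is a non-trivial outer automorphism of order $p$ coprime to $|N_i|$.

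Now Lemma~\ref{aut} applied to $N_1$ provides a prime $s\in\pi(N_1)\setminus\pi(\out{N_1})$ with $(s,|\ce{N_1}{y}|)=1$. Transporting by the isomorphisms $N_i\cong N_1$ yields $(s,|\ce{N_i}{y}|)=1$ for every $i$, so $\ce{N}{y}=\prod_i\ce{N_i}{y}$ and each conjugate $\ce{N}{y^g}=\ce{N}{y}^g$ is an $s'$-group. By the index hypothesis, every prime-power-order element of $A\cap N$ centralises some conjugate $y^g$ and thus lies in $\ce{N}{y^g}$, forcing $A\cap N$ to be an $s'$-group; the same argument applies to $B\cap N$.

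The final Sylow count, which I expect to be the delicate point, yields the contradiction. Since $\ce{G}{N}=1$, $G/N$ embeds into $\out{N}\cong\out{N_1}\wr\Sigma_r$, so $|G/N|_s\leq r!_s$. By Lemma~\ref{1.3.2} applied to $\pi=\{s\}$ there exists $\hat S\in\syl{s}{G}$ with $\hat S=(\hat S\cap A)(\hat S\cap B)$; both factors meet the $s'$-group $N$ trivially and hence embed into $G/N$, giving $|G|_s\leq|G/N|_s^{\,2}$. Combined with $|G|_s=|N|_s\cdot|G/N|_s\geq s^r\cdot|G/N|_s$ this forces $s^r\leq|G/N|_s\leq r!_s$. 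Bounding $r!_s\leq s^{(r-1)/(s-1)}$ via Lemma~\ref{SylowSym} gives $r(s-1)\leq r-1$, i.e.\ $rs\leq 2r-1$, which is impossible for any prime $s\geq 2$ and any $r\geq 1$. Thus the standing assumption fails for all $r\geq 1$; in particular $r=1$ is excluded, so $r>1$.
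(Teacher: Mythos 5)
Your proof is correct and follows essentially the same route as the paper's: reduce to $\langle y\rangle$ normalising every $N_i$, invoke Lemma~\ref{aut} to produce a prime $s$ avoiding both $\op{Out}(N_1)$ and $\ce{N}{y}$, deduce from the index hypothesis that $N\cap A$ and $N\cap B$ are $s'$-groups, and derive a contradiction by comparing $\abs{N}_s\geq s^r$ with the bound $\abs{\Sigma_r}_s\leq s^{(r-1)/(s-1)}$ from Lemma~\ref{SylowSym}. The only cosmetic difference is that you obtain $\abs{N}_s\leq\abs{G/N}_s$ via a prefactorised Sylow $s$-subgroup (Lemma~\ref{1.3.2}), where the paper uses the order identity of Lemma~\ref{xi}; the two computations are equivalent, and your extra care in checking that $y$ induces a non-trivial \emph{outer} automorphism of each $N_i$ fills in a detail the paper leaves implicit.
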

\begin{proof}
	Assume that $\langle y \rangle$ normalises some $N_i$ with $i \in\{1, \ldots, r\}$. Then $\langle y \rangle$ normalises  $N_i$ for each $i \in\{1, \ldots, r\}$. We can view $\langle y \rangle$ as a subgroup of $\op{Aut}(N_i)$, because $\ce{\langle y\rangle}{N_i}=1$ (recall that $y$ has order $p$). By Lemma~\ref{aut}, there exists a prime $s\in\pi(N_i)\smallsetminus\pi(\op{Out}(N_i))$ such that $(s, |\ce{N_i}{y}|)=1$. Therefore $s$ cannot divide $|\ce{N}{y}|$ as $\ce{N}{y}=\ce{N_1}{y}\times \cdots \times \ce{N_r}{y}$. 

Since each element of prime power order in $(N \cap A) \cup (N \cap B)$ centralises some Sylow $p$-subgroup (because our hypotheses), we deduce that $\pi(N\cap A)\cup\pi(N\cap B)\subseteq \pi(\ce{N}{y})$. Thus, this last property and Lemma~\ref{xi} yield $s\in\pi(G/N)$. 

Note that $G/N\lessapprox \op{Out}(N)$ and $\op{Out}(N)\cong \op{Out}(N_1) \text{ wr } \Sigma_{r}$, , the natural wreath product of 
$\op{Out}(N_1)$ with $\Sigma_r$. As $s$ does not divide $\abs{\op{Out}(N_i)}$ for any $i$, it follows that $s\in\pi(\Sigma_r)$. 
Using Lemma~\ref{xi}, we obtain that $\abs{N}_s$ divides $\abs{G/N}_s$, and so it divides $\abs{\Sigma_r}_s$. Set $\abs{N_1}_s:=s^d$. Then $\abs{N}_s=s^{dr}$ divides $s^{\frac{r-1}{s-1}}$, by Lemma \ref{SylowSym}, so $dr\leq \frac{r-1}{s-1}$ and necessarily $d=0$, which contradicts the fact that $s$ divides $\abs{N_i}$. 
\end{proof}

\begin{lemma}
 Set $C:=\ce{\rad{p'}{G}\cap A}{y}=\ce{\rad{p'}{A}}{y}$ and $A_0:=\langle y \rangle \times C$. Then $A=(N\cap A)A_{0}$ and $G=AN=A_{0}N$. 
\end{lemma}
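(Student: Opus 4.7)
The plan is to derive the factorisation of $A$ by applying coprime action of $\langle y\rangle$ on the Hall $p'$-subgroup of $A$, after first identifying that subgroup with $\rad{p'}{G}\cap A$ and verifying that its commutator with $y$ lands inside $N$.

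As a first step I would justify the equality $\rad{p'}{G}\cap A=\rad{p'}{A}$ that is built into the definition of $C$. Lemma~\ref{4} gives $A=(\rad{p'}{G}\cap A)\langle y\rangle$, so $\rad{p'}{G}\cap A$ is a normal $p'$-subgroup of $A$ of index $p$; hence $\rad{p'}{A}/(\rad{p'}{G}\cap A)$ is a $p'$-subgroup of a cyclic group of order $p$, and the inclusion $\rad{p'}{G}\cap A\leq \rad{p'}{A}$ must be an equality.

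Next I would establish that $[\rad{p'}{G},y]\leq N$. Using $G=N\ce{G}{y}$ from Lemma~\ref{4}, any $x\in \rad{p'}{G}$ can be written as $x=nc$ with $n\in N$ and $c\in\ce{G}{y}$; since $[c,y]=1$ and $N\unlhd G$, the commutator identity yields $[x,y]=[n,y]^{c}[c,y]=[n,y]^{c}\in N$. Intersecting with $A$, one obtains $[\rad{p'}{A},y]\leq N\cap A$.

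For the main conclusion, coprime action of $\langle y\rangle$ (of order $p$) on the $p'$-group $\rad{p'}{A}$ provides the decomposition $\rad{p'}{A}=[\rad{p'}{A},y]\cdot C$. An arbitrary $a\in A$ therefore factors as $a=xy^{k}=u(cy^{k})$ with $x=uc$, $u\in[\rad{p'}{A},y]\leq N\cap A$ and $cy^{k}\in\langle y\rangle\times C=A_{0}$ (the product is direct because $C$ centralises $y$ and is a $p'$-group). This gives $A=(N\cap A)A_{0}$, and finally $G=AN=(N\cap A)A_{0}N=A_{0}N$, where the last equality uses $N\cap A\leq N\unlhd G$. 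I do not expect significant obstacles: the only non-routine step is $[\rad{p'}{G},y]\leq N$, which is an immediate consequence of the factorisation $G=N\ce{G}{y}$ already secured in Lemma~\ref{4}.
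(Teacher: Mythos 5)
Your proof is correct and takes essentially the same route as the paper: the heart of both arguments is the coprime-action decomposition $\rad{p'}{A}=[\rad{p'}{A},y]\,\ce{\rad{p'}{A}}{y}$ combined with the containment $[\rad{p'}{A},y]\leq N\cap A$. The only difference is in how that containment is justified --- the paper reads it off from the fact that $G/N\cong A/(A\cap N)$ is $p$-decomposable (by minimality), while you derive it by a commutator computation from $G=N\ce{G}{y}$; both facts were already secured earlier in the reduction, so the two justifications are interchangeable.
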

\begin{proof}
By the minimal choice of $G$, we deduce that $G/N\cong A/A\cap N$ is $p$-decomposable. Hence $[\rad{p'}{G}\cap A, \langle y\rangle ]\leq  N\cap A$. Thus, by coprime action, $\rad{p'}{G}\cap A=[\rad{p'}{G}\cap A, \langle y\rangle]\ce{\rad{p'}{G}\cap A}{y}\leq  (N\cap A)\ce{\rad{p'}{G}\cap A}{y}$. Hence $A=(N\cap A)\ce{\rad{p'}{G}\cap A}{y}\langle y\rangle=(N \cap A)A_0$, and the assertion follows. \end{proof}
\medskip

Recall that $N=N_1\times N_2\times \cdots \times N_r$ with $N_i \cong N_1$ a non-abelian simple group, and set $\Omega:=\{N_1, N_2, \ldots, N_r\}$. By Lemma \ref{new},  $r > 1$. As $G=A_0N$,  $A_0$ acts transitively on $\Omega$. We adapt here some arguments used in \cite{KMP3} and we claim some facts about this action:

\begin{itemize}

\item[(i)] The orbits of $B$ on $\Omega$ are the same as those of $C$. 

This is clear because $\rad{p'}{G}=\rad{p'}{A}N=CN=BN$.

\item[(ii)] Let $\Delta$ be an orbit of $C$ on $\Omega$ of minimal lenght. If $c\in C$, then  $\Delta^{yc}=\Delta^{cy}=\Delta^y$, so $\Delta^y$  and $\Delta\cap \Delta^y$ are also orbits of $C$. Therefore, by the choice of $\Delta$,  either $\Delta=\Delta^y$ (and hence $\Delta=\Delta^{y^{i}}$ for $i\in\{1,\ldots, p\}$),  or $\Delta\cap \Delta^y=\emptyset$ (and hence $\Delta^{y^{i}}\cap \Delta^{y^{j}}=\emptyset$ for $i\neq j$, $i, j \in \{1,\ldots, p\}$). It follows that there is a partition of $\Omega$ of the form $$\Omega=\Delta_1 \cup \Delta_2 \cup \cdots \cup \Delta_k,$$ where $\Delta_i:=\Delta^{y^{i-1}}$ for $i\in\{1,\ldots, k\}$, and $k \in \{1, p\}$. Note that all $\Delta_i$ have the same length, say $m$, and  $\{\Delta_1, \ldots,  \Delta_k\}$ are all the $C$-orbits (and $B$-orbits) on $\Omega$. Note also that $m$ is a $p'$-number, since $C$ is a  $p'$-subgroup.

Moreover, $\langle y \rangle$ acts transitively on $\{\Delta_1, \Delta_2, \ldots, \Delta_k\}$. 

\item[(iii)] The length of an orbit $\nabla$ of $\langle y \rangle$ on $\Omega$ is $k=p$, and there are $m$ orbits $\nabla_1:=\nabla$, $\nabla_2\dots, \nabla_m$. Hence there is a partition $$\Omega=\nabla_1\cup \cdots \cup \nabla_m$$ 
and both $\rad{p'}{A}$ and  $B$ act transitively on the set  $\{\nabla_1, \nabla_2, \dots, \nabla_m\}$. In particular, for each $1\leq i\leq m$, there exists $a_i\in\rad{p'}{A}$ such that $\nabla_1^{a_i}=\nabla_i$; $a_1=1$.

 Since the lenght of an orbit of $\langle y \rangle$ on $\Omega$ divides $p$ and  $\langle y \rangle$ does not normalise any $N_i$, by Lemma \ref{new}, the first assertion follows. 
Now, the fact that $G=\langle y \rangle \rad{p'}{A}N=\langle y\rangle BN$ gives the last assertion.

\item[(iv)] It follows from (ii) and (iii) that $r=pm$, with $1=(m,p)$.


\item[(v)] Without loss of generality, we may consider $\Delta=\{N_1,\ldots, N_m\}$, and we set $M_{\Delta}:=N_1\times \cdots \times N_m$. Then $M_{\Delta}$ is a minimal normal subgroup of $NC$. 

Moreover, if $1\neq R\leq  N$ and $R\unlhd NC$, then there exist $\{x_1, \ldots, x_d\}\subseteq \langle y \rangle$ such that $R=M_{\Delta}^{x_1} \times \cdots \times M_{\Delta}^{x_d}$.

\item[(vi)] Since $r > 1$, then $m>1$.

Recall that $r >1$, by Lemma \ref{new}. If $m=1$, then $\langle y \rangle$ has only one orbit on $\Omega$, i.e. $\langle y \rangle$ acts transitively on $\Omega=\{N_1, \ldots, N_k\}$.   Suppose, for instance, that $N_i=N_1^y$, for $i > 1$. If there exists a non-trivial element $x\in\ce{N_1}{y}$, then $x=x^y\in N_1\cap N_1^y=N_1 \cap N_i$, a contradiction. Hence $\ce{N_1}{y}=1$, and  it follows that $\ce{N}{y}=1$. But since $N\cap A\neq 1$, by Lemma \ref{ncapa}, we can choose an element of prime powe order $1 \neq x \in N \cap A$  such that $x \in C_N(y)$ because the hypotheses on the indices (recall that $N$ is a $p'$-group), which gives a contradiction.

\item[(vii)]  Since $k=p>1$, then $A\cap M_{\Delta}=1=\ce{\rad{p'}{A}}{M_{\Delta}}$.

Let $x\in A\cap M_{\Delta}$ of prime power order, which is a $p$-regular element. Then by hypotheses there exists $n\in N$ with $x\in\ce{G}{\langle y\rangle^n}$. Hence $x^{y^n}=x\in M_{\Delta}\cap M_{\Delta}^{y^n}=M_{\Delta}\cap M_{\Delta}^{y}=1$. We deduce that $A\cap M_{\Delta}=1$.

Let $x\in \ce{\rad{p'}{A}}{M_{\Delta}}$ of prime power order. Then there exists $n\in N$ such that $[\langle y \rangle^n, x]=1$. Therefore $[x, M_{\Delta}]=1=[x^{(y^j)^n}, M_{\Delta}^{(y^j)^n}]=[x, M_{\Delta}^{y^j}]$, for every $j\in\{1, \ldots, p-1\}$. So $x\in\ce{G}{N}=1$. Hence $\ce{\rad{p'}{A}}{M_{\Delta}}=1$.


%

\item[(viii)] Without loss of generality, let $\nabla =\{N_1, \ldots, N_p\}$.  Set $M_{\nabla}:=N_1\times \cdots\times N_p$. Then $M_{\nabla}$ is a minimal normal subgroup of $N\langle y \rangle$, and if $1\neq R\leq  N$ with $R\unlhd N\langle y\rangle$, then there exist $\{d_1,\ldots, d_t\}\subseteq\{a_1,\ldots, a_m\} \subseteq   \rad{p'}{A}$ such  that $R=M_{\nabla}^{d_1}\times \cdots \times M_{\nabla}^{d_t}$.

Moreover, if we set $F_1:=N_2\times \cdots \times N_p$,  $F_i:=F_1^{a_i}$ for each $2\leq i\leq m$, and $F_{\nabla}:=F_1\times \cdots \times F_m$, then  $F_{\nabla}\cap \rad{p'}{A}=1=F_{\nabla}\cap B$.

 The first assertion is clear. If $x\in F_{\nabla}\cap \rad{p'}{A}$ is of prime power order, then there exists $n\in N$ such that $\langle y \rangle^n$ centralises $x$, so for every $1\leq j \leq p$ we get $x=x^{(y^j)^n}\in F_{\nabla}\cap F_{\nabla}^{y^j}\leq  E_{\nabla}:=\cap_{g\in\langle y \rangle} F_{\nabla}^g$. It follows that $F_{\nabla}\cap \rad{p'}{A}\leq  E_{\nabla}$. Note that $E_{\nabla}\leq  N$ and it is normal in $N\langle y \rangle$, hence we deduce from the above that  $E_{\nabla}=1$, and so $F_{\nabla}\cap \rad{p'}{A}=1$. Analogously, $F_{\nabla}\cap B=1$.
\end{itemize}
\medskip
Now, we will use the above facts on the actions of $C$ (and so $B$) and $\langle y \rangle$ on the set $\Omega$ to see that the minimal normal subgroup $N$ in our minimal counterexample cannot be a $p'$-group. 

The proof of the next Lemma follows similar arguments as those in \cite[Lemma 11]{KMP3}, using (i)-(viii) above, with suitable changes. However, we include an outline of the proof for the convenience of the reader.

\begin{lemma}\label{ix} Let $s\neq p$ be a prime, and assume that $\abs{N_1}_s=s^n$ and $\abs{\op{Out}(N_1)}_s=s^{\delta}$. Then $n(p-2)\leq \delta + \frac{m-1}{m(s-1)}$, where $r=pm$. In particular, $n(p-2)< \delta+1$.
\end{lemma}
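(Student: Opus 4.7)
The plan is to take $s$-parts in Lemma~\ref{xi} (using $|\langle y\rangle|_s=1$ since $s\ne p$), which yields
$$|N|_s\cdot|A\cap B|_s \;=\; |G/N|_s\cdot|N\cap A|_s\cdot|N\cap B|_s.$$
Since $|N|_s = s^{npm}$, the desired inequality will follow from the bounds
\begin{itemize}
\item[(A)] $|N\cap A|_s,\ |N\cap B|_s \le s^{nm}$,
\item[(B)] $|G/N|_s \le s^{\delta m + (m-1)/(s-1)}$,
\end{itemize}
because (A) and (B) combined give $s^{npm}\le s^{2nm+\delta m+(m-1)/(s-1)}$; dividing by $m$ after taking $s$-logarithms yields the main inequality, and $(m-1)/(m(s-1))<1/(s-1)\le 1$ then gives the ``in particular'' clause.

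Bound (A) is a quick consequence of (viii). Picking one simple factor from each $\langle y\rangle$-orbit produces a subgroup $D_\nabla\le N$ of order $|N_1|^m$ with $N = D_\nabla\times F_\nabla$. Since $N\cap A\le\rad{p'}{A}$, property (viii) gives $F_\nabla\cap(N\cap A)=1$, so the projection $N\cap A\to N/F_\nabla\cong D_\nabla$ is injective and $|N\cap A|\le|N_1|^m$. The bound for $N\cap B$ follows analogously from $F_\nabla\cap B=1$.

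Bound (B) is the main obstacle, and I would establish it via the block system $\{\nabla_1,\dots,\nabla_m\}$ on $\Omega$. Since $\langle yN\rangle\unlhd G/N$, every $G$-conjugate of $\langle y\rangle$ lies in $\langle y\rangle N$; as $N$ acts trivially on $\Omega$, these conjugates share the $\nabla_j$-orbits, so $G$ permutes $\{\nabla_j\}$. Writing $N_{i,j}:=\Delta_i\cap\nabla_j$, any $b\in B$ must satisfy $b(N_{i,j})=N_{i,\sigma_b(j)}$ for a permutation $\sigma_b\in\Sigma_m$ independent of $i$, because $b$ preserves each $\Delta_i$ and sends $\nabla_j$ to $\nabla_{\sigma_b(j)}$. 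Therefore the image of $B$ in $\Sigma_r$ lies in a diagonal copy of $\Sigma_m$, and Lemma~\ref{SylowSym} yields $|B/(B\cap D)|_s\le s^{(m-1)/(s-1)}$, where $D:=\bigcap_i N_G(N_i)$.

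The crux is then the refined estimate $|(B\cap D)/(B\cap D\cap N)|_s\le s^{\delta m}$. From the direct decomposition $G/N=\langle yN\rangle\times BN/N$, each $d\in B$ satisfies $y^d=yn$ for some $n\in N$. Choosing, within each column $\nabla_j$, compatible identifications $\iota_{i,j}\colon N_{i,j}\to N_1$ that trivialise the conjugation isomorphisms $N_{i,j}\to N_{i+1,j}$ induced by $y$ (possible since $y^p=1$), the relation $y^d=yn$ translates, for $d\in B\cap D$, into the identity $\bar\phi_{d,i+1,j}=\op{Inn}(\bar n_{i+1,j})\,\bar\phi_{d,i,j}$ in $\aut{N_1}$. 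Hence the outer class $[\bar\phi_{d,i,j}]\in\out{N_1}$ does not depend on $i$, so $B\cap D$ maps into a column-diagonal copy of $\out{N_1}^m$ with kernel exactly $B\cap D\cap N$. Combining these two steps gives $|G/N|_s=|B|_s/|B\cap N|_s\le s^{\delta m+(m-1)/(s-1)}$, as required.
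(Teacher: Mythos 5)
Your proof is correct, and while the first half coincides with the paper's, the key bound is obtained by a genuinely different route. Both arguments use fact (viii) in the same way to get $\abs{N\cap A}_s,\abs{N\cap B}_s\leq s^{nm}$ (the paper phrases this for $A_s\cap N$ and $B_s\cap N$ with $A_s\in\syl{s}{A}$, $B_s\in\syl{s}{B}$). The difference lies in how the remaining $s$-power is controlled: the paper never bounds $\abs{G/N}_s$ at all. Instead it invokes facts (v) and (vii) --- $M_{\Delta}\unlhd N\rad{p'}{A}$ with $M_{\Delta}\cap\rad{p'}{A}=1=\ce{\rad{p'}{A}}{M_{\Delta}}$ --- to embed $\rad{p'}{A}$ into $\op{Aut}(M_\Delta)\cong\op{Aut}(N_1)\op{wr}\Sigma_m$, giving $\abs{A}_s\leq s^{(\delta+n)m+(m-1)/(s-1)}$, and then uses $\abs{B}_s=\abs{G/N}_s\abs{B_s\cap N}$ together with the divisibility $\abs{G}_s\mid\abs{A}_s\abs{B}_s$ from $G=AB$, so that the unknown exponent of $\abs{G/N}_s$ cancels. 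You instead prove the explicit estimate $\abs{G/N}_s\leq s^{\delta m+(m-1)/(s-1)}$: the quotient $B/(B\cap D)$ is handled by the grid $\Delta_i\cap\nabla_j$ and Lemma~\ref{SylowSym}, and for $B\cap D$ you exploit the relation $y^d=yn$ coming from $G/N=\langle yN\rangle\times BN/N$ to show the induced outer classes are constant along each $\langle y\rangle$-orbit, cutting the naive exponent $\delta pm$ down to $\delta m$; feeding this into the $s$-part of Lemma~\ref{xi} gives the same inequality. I checked the twisted-diagonal computation ($\bar\phi_{d,i+1,j}=\op{Inn}(\bar n_{i+1,j})\bar\phi_{d,i,j}$, with the wrap-around closed by $y^p=1$) and the kernel identification via $\ce{G}{N}=1$; both are sound. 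Your route is longer and more delicate at the crux, but it avoids (v) and (vii) entirely and yields a finer piece of information (a bound on $\abs{G/N}_s$ itself); the paper's wreath-product embedding is the quicker path to the same conclusion.
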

\begin{proof}
Recall that   a $\langle y \rangle$-orbit $\nabla$ on $\Omega$ has length $k=p > 1$. 
Let $A_s\in\syl{s}{A}$ and $B_s\in\syl{s}{B}$. Note that $A_s\leq  \rad{p'}{A}$. By (viii) above, $F_{\nabla}\unlhd N$ and $F_{\nabla}\cap N\cap A_s\leq  F_{\nabla}\cap \rad{p'}{A}=1$. So it follows that $\abs{A_s\cap N}\leq \abs{N:F_{\nabla}}_s=\abs{N_1}_s^m=s^{nm}$. Analogously $\abs{B_s\cap N}\leq s^{nm}$. 

 Set $M:=M_{\Delta}=N_1 \times \cdots \times N_m$. From (v) and (vii) above we have that  $M\unlhd N\rad{p'}{A}=NB$ and $M\cap \rad{p'}{A}=1=\ce{\rad{p'}{A}}{M}$. Hence $\rad{p'}{A}\cong \rad{p'}{A}\ce{G}{M}/{\ce{G}{M}} \lessapprox \op{Aut}(M)$.
Moreover $\op{Aut}(M) \cong [\op{Aut}(N_1) \times \cdots \times \op{Aut}(N_m)]\Sigma_m \cong \, \op{Aut}(N_1)\op{wr} \, \Sigma_m$, the natural wreath product of 
$\op{Aut}(N_1)$ with $\Sigma_m$. 
Now applying Lemma~\ref{SylowSym} we deduce that $\abs{A_s}$ divides $\abs{\op{Aut}(M)}$, and so $s^{(\delta+n)m}\cdot s^{\frac{m-1}{s-1}}$.

On the other hand, if $\abs{G/N}_s=s^{\gamma}$, then $\abs{G}_s=\abs{G/N}_s \abs{N}_s=s^{\gamma+nr}$. Further, $\abs{B_s}=\abs{G/N}_s \abs{B_s\cap N}$ divides $s^{\gamma+nm}$.
Since $\abs{G}_s$ divides $\abs{A_s} \abs{B_s}$, so $s^{\gamma+nr}$ divides $s^{\frac{m-1}{s-1}}\cdot s^{\gamma+nm} s^{(\delta + n)m}$. This fact, after some straightforward computations,  leads to the desired conclusion, having in mind that $r=pm$. 
\end{proof}

\begin{lemma}  $N$ is not a $p'$-group. 
\end{lemma}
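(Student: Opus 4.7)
The plan is to split into two cases according to whether $p=2$ or $p\geq 3$, and in each case extract a contradiction from the structural information already accumulated about $N$.

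For $p=2$, the assumption that $N$ is a $p'$-group means $|N|$ is odd. By the Feit--Thompson odd order theorem, $N$ is then soluble. But Lemma~\ref{5} shows that $N=N_1\times\cdots\times N_r$ is a direct product of non-abelian simple groups, which is manifestly non-soluble. This contradiction rules out the case $p=2$ immediately, so no further work is needed here.

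For $p\geq 3$, the strategy is to feed Lemma~\ref{ix} a carefully chosen prime $s$ for which the parameter $\delta$ vanishes, thereby forcing an arithmetical contradiction. Since $N$ is assumed to be a $p'$-group, every prime in $\pi(N_1)$ is different from $p$. I would apply Lemma~\ref{aut} to the non-abelian simple group $N_1$ to obtain a prime $s\in\pi(N_1)\setminus\pi(\op{Out}(N_1))$. In the notation of Lemma~\ref{ix}, this choice yields $\delta=0$ while $n\geq 1$. Since $m\geq 2$ (by item (vi) in the list of properties) and $s\geq 2$, the fractional term satisfies $\frac{m-1}{m(s-1)}<1$, and because $n(p-2)$ and $\delta$ are integers, the inequality $n(p-2)\leq \delta+\frac{m-1}{m(s-1)}$ collapses to $n(p-2)\leq \delta=0$. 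Since $p-2\geq 1$ and $n\geq 1$, this is the required contradiction.

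There is essentially no hidden difficulty: the heavy machinery (the action analysis of $\langle y\rangle$ and $C$ on $\Omega$, the bound in Lemma~\ref{ix}, and the existence result of Lemma~\ref{aut}) has already been assembled. The only subtlety worth double-checking is that Lemma~\ref{aut} is being applied legitimately; this is fine because $N_1$ is a non-abelian simple group by Lemma~\ref{5}, and we are free to pick $s\neq p$ from $\pi(N_1)$ since $p\notin\pi(N_1)$ under the standing assumption of this subsection. Combining the two cases shows that $N$ cannot be a $p'$-group, which completes the reduction begun in Lemma~\ref{3} and places us squarely in the almost simple setting treated in the next section.
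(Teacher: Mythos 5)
Your proof is correct and follows essentially the same route as the paper: choose a prime $s\in\pi(N_1)\setminus\pi(\operatorname{Out}(N_1))$ (necessarily $s\neq p$ since $N$ is a $p'$-group), apply Lemma~\ref{ix} with $\delta=0$ to force $p=2$, and then invoke Feit--Thompson together with Lemma~\ref{5} to exclude that case. The only cosmetic differences are that you split into cases up front rather than deriving $p=2$ first, and that you justify the existence of $s$ via Lemma~\ref{aut} where the paper cites an external reference; both are legitimate.
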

\begin{proof}
We take a prime $s\in \pi(N_1)\smallsetminus \pi(\op{Out}(N_1))$ (such prime always exists, see for instance \cite[Lemma 5]{KMP3}). Note that $s \neq p$, since $N$ is a $p'$-group. Applying the previous Lemma for such prime we obtain that $n(p-2)<\delta +1$, but $\delta=0$ so necessarily $p=2$. This cannot happen, as it would imply that $N$ is a $2'$-group, so soluble, which is a contradiction by Lemma~\ref{5}.
\end{proof}

\section{The almost simple case}

Let $N$ be a non-abelian simple group with $p \in \pi(N)$, and let $N \unlhd G \leq \aut{N}$ such that $G=AB=AN=BN$. Assume that $G$ satisfies the hypotheses of our main theorem, i.e. 
 $p$ does not divide $i_G(x) $ for every $p$-regular element  of prime power order $x \in A \cup B$.

 We will  carry out a case-by-case analysis of the simple group $N$ occuring as the socle of $G$ to prove that there is no a counterexample to our Main Theorem. Our strategy will apply the following lemma and the results in Section 3. 
\begin{lemma}\label{cent}
For any $P \in \syl{p}{G}$  
$$\pi(G)=\pi(C_G(P)) =\pi(G/N) \cup \pi(C_N(P)).$$
In particular, $p \in \ze{\Gamma(G)}$. 
Moreover,  if $r \in \pi(G) \setminus \pi(G/N)$, then $r$ is adjacent to $p$ in $\Gamma(N)$. 
\end{lemma}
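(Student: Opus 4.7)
The plan is to establish all three statements by combining the hypothesis on indices with the factorisation $G=AB=AN=BN$, using Sylow conjugacy to move centralising elements to the fixed $P$.

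For the first equality $\pi(G)=\pi(C_G(P))$, the inclusion $\supseteq$ is trivial, so I take $r\in\pi(G)$ and produce an $r$-element in $C_G(P)$. The case $r=p$ is immediate since $P\leq C_G(P)$. For $r\neq p$, the factorisation $G=AB$ forces $|G|_r$ to divide $|A|_r|B|_r$, so $r\in\pi(A)\cup\pi(B)$, and I pick an $r$-element $x$ in $A\cup B$. Such an $x$ is $p$-regular of prime power order, so the hypothesis yields a Sylow $p$-subgroup $Q\in\syl{p}{G}$ with $Q\leq C_G(x)$. Writing $Q=P^g$ for some $g\in G$ gives $x^{g^{-1}}\in C_G(P)$, so $r\in\pi(C_G(P))$.

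For the second equality, one inclusion comes from the short exact sequence
\[
1\to C_N(P)=C_G(P)\cap N\to C_G(P)\to C_G(P)N/N\leq G/N\to 1,
\]
which gives $\pi(C_G(P))\subseteq \pi(C_N(P))\cup\pi(G/N)$. For the reverse inclusion, $\pi(C_N(P))\subseteq\pi(C_G(P))$ is obvious, and for $r\in\pi(G/N)$ with $r\neq p$ I use $G/N\cong A/(A\cap N)\cong B/(B\cap N)$: any $r$-element of $G/N$ lifts by Lemma (c) of the preliminary elementary lemma to an $r$-element of $A$ (or $B$), at which point the same centralising-and-conjugating argument as above places an $r$-element inside $C_G(P)$. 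The case $r=p$ is again trivial.

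The remaining assertions follow easily. Since $\pi(G)=\pi(C_G(P))$, for each $r\in\pi(G)\setminus\{p\}$ I may pick a nontrivial element $y$ of order $r$ in $C_G(P)$ and any $x\in P$ of order $p$; these commute and have coprime orders, so $xy$ has order $pr$, showing $r$ is adjacent to $p$ in $\Gamma(G)$ and hence $p\in\ze{\Gamma(G)}$. Finally, if $r\in\pi(G)\setminus\pi(G/N)$, the second equality forces $r\in\pi(C_N(P))$, and since $p\in\pi(N)$ we have $P\cap N\in\syl{p}{N}$ nontrivial; picking $y\in C_N(P)$ of order $r$ and $x\in P\cap N$ of order $p$, both elements lie in $N$, commute, and have coprime orders, so $xy\in N$ has order $pr$, witnessing adjacency of $p$ and $r$ in $\Gamma(N)$. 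The only subtlety in the whole argument is making sure the lifting step (from $G/N$ to $A\cup B$) uses Lemma (c) correctly, but no serious obstacle is expected.
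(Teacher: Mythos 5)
Your proof is correct, and its core mechanism is exactly the paper's: every $p$-regular element of prime power order in $A\cup B$ is centralised by some Sylow $p$-subgroup, which one conjugates onto the fixed $P$; the adjacency claims then follow by multiplying commuting elements of coprime order, using $1\neq P\cap N\in\syl{p}{N}$ for the statement inside $\Gamma(N)$. The only genuine divergence is in the second equality. The paper derives the identity $\abs{N}\abs{A\cap B}=\abs{G/N}\abs{N\cap A}\abs{N\cap B}$ from $G=AN=BN=AB$ and uses it to push every prime of $\pi(N)\setminus\pi(G/N)$ into $\pi(N\cap A)\cup\pi(N\cap B)$, hence into $\pi(\ce{N}{P})$ by the hypothesis (with $r=p$ handled via $1\neq \op{Z}(P)\cap N\leq \ce{N}{P}$). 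You instead bound $\pi(\ce{G}{P})$ from above via $\ce{G}{P}/\ce{N}{P}\hookrightarrow G/N$ and from below by lifting $r$-elements of $G/N$ into $A$ with part (c) of the elementary lemma and re-running the centralising argument. Both routes are sound and equally elementary; yours trades the arithmetic identity for the lifting lemma, but note that the paper's order count is reused verbatim later (in the Lie-type case, to show $t\in\pi((N\cap A)\cup(N\cap B))$), so it is worth recording regardless.
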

\begin{proof}
 By our hypotheses,  for any prime $r \in \pi(G) \setminus \{p\}$ there exists an element $x \in A \cup B$ of order $r$ such that $x \in C_G(P)$, for  some $P \in \syl{p}{G}$.  Hence the first equality follows. Now, observe that $\pi(G)=\pi(G/N) \cup \pi(N)$ and, since $G=AN=BN=AB$, after some computations we also obtain
$$\abs{N}\abs{A\cap B}=\abs{\frac{G}{N}}\abs{N\cap A}\abs{N\cap B}.$$
But again our hypotheses lead to $\pi((N \cap A) \cup (N \cap B))\setminus \{p\} \subseteq \pi(C_{N}(P))$. Also, since $p \in \pi(N)$, $1 \neq Z(P) \cap N \leq C_{N}(P)$. Hence the second equality also holds. 

It is clear then that $p \in \ze{\Gamma(G)}$. Assume now that  $r \in \pi(G) \setminus \pi(G/N)$, and so $r \in \pi(N)$. By the second equality, we deduce that $r \in  \pi(C_N(P))$ and since $p \in \pi(N)$ the last assertion follows.
\end{proof}

\begin{lemma}\label{notan}
$N$ is not an alternating group $A_n$.
\end{lemma}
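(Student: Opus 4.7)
The plan is to argue by contradiction. Suppose $N = A_n$ for some $n \geq 5$. The strategy is to combine Lemma~\ref{cent} (which ensures $p \in \ze{\Gamma(G)}$) with the structural Lemmas~\ref{angraph} and \ref{a6} on the centers of prime graphs of alternating and symmetric groups in order to pin $p$ down to a very small prime, and then exhibit a $p$-regular element of prime power order in $A \cup B$ whose $G$-index is divisible by $p$, contradicting the standing hypothesis.

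I would begin by dispatching the small cases. For $n = 5$, a direct inspection of the element orders in $A_5$ ($\{1,2,3,5\}$) and $S_5$ ($\{1,\ldots,6\}$) shows that $\ze{\Gamma(A_5)} = \ze{\Gamma(S_5)} = \emptyset$, contradicting Lemma~\ref{cent}. For $n = 6$, Lemma~\ref{a6} forces $G = \aut{A_6}$ and $p = 2$; since $5 \mid |G|$ and $G = AB$, we can select a $5$-element $x \in A \cup B$, but $|\ce{G}{x}| \leq 4\cdot|\ce{A_6}{x}| = 20$ (using $\aut{A_6}/A_6 \cong C_2\times C_2$ and the standard embedding of $\ce{G}{x}/\ce{N}{x}$ into $G/N$), which is strictly less than $|G|_2 = 32$, so a full Sylow $2$-subgroup of $G$ cannot centralise $x$ and $p\mid i_G(x)$. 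Finally, for $n \geq 7$ with $n$ prime, the vertex $n$ is isolated in $\Gamma(G)$ (any element of order $nr$ would require disjoint cycles totalling more than $n$ points), so $\Gamma(G)$ is disconnected and Lemma~\ref{cent} is again contradicted.

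For the principal case $n \geq 7$ with $n$ composite, I would apply Lemma~\ref{angraph} with the integer $k$ defined there. If $k = 1$ the graph is disconnected, so we may assume $k \geq 2$, and the various subcases of Lemma~\ref{angraph} all force $p \leq t \leq k$, where $t$ is the largest prime at most $k$. Let $q := n - k$, which is the largest prime not exceeding $n$. Bertrand's postulate gives $q > n/2$, so $|G|_q = q$ and any element of order $q$ in $G$ is a single $q$-cycle. Since $G = AB$ and $q \mid |G|$, without loss of generality $q \mid |A|$, and $A$ contains such a $q$-cycle $x$. A routine computation, uniform for $G = A_n$ and $G = S_n$, gives
$$i_G(x) = \frac{n!}{q\cdot k!},$$
whose $p$-part equals the $p$-part of the product $n(n-1)\cdots(k+1)$ of $q$ consecutive integers. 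Since $p \leq k < q$, among these $q$ consecutive integers there are at least $\lfloor q/p\rfloor \geq 1$ multiples of $p$, so $p\mid i_G(x)$, the desired contradiction.

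The main subtlety is the $n = 6$ case, which sits outside the general pattern because $\aut{A_6}$ strictly contains $S_6$; the argument there relies on the small index $|\aut{A_6}:A_6| = 4$ to bound the centralizer of a $5$-element and prevent it from absorbing a full Sylow $2$-subgroup. A secondary point is verifying that the uniform formula $i_G(x) = n!/(q\cdot k!)$ for a $q$-cycle holds identically in $A_n$ and $S_n$, which follows from $|\ce{A_n}{x}| = q\cdot (n-q)!/2$ once $n-q \geq 2$, a condition automatic from $k\geq 2$.
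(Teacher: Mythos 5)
Your proof is correct and follows essentially the same route as the paper: Lemma \ref{cent} places $p$ in $\ze{\Gamma(G)}$, Lemma \ref{angraph} (resp.\ Lemma \ref{a6} for $n=6$) forces $p\leq t\leq k$, and the largest prime $q=n-k>n/2$ then yields an element of order $q$ whose index is divisible by $p$ via the product $n(n-1)\cdots(k+1)$ of $q$ consecutive integers. Your only deviations are cosmetic but sound: computing $i_G(x)=n!/(q\cdot k!)$ uniformly for $A_n$ and $\Sigma_n$ lets you avoid the paper's separate treatment of $p=2$, and your explicit centralizer bound for $n=6$ replaces the paper's appeal to the self-centralising Sylow $2$-subgroup of $\aut{A_6}$.
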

\begin{proof}
Let $N=A_n$ and assume first $n\neq 6$, so $G=N=A_{n}$ or $G=\Sigma_n$.   Because our hypotheses, we may assume that $\Gamma(G)$ is connected. As in Lemma \ref{angraph}, let $k\geq 2$ be  the largest positive integer such that $\{n, n-1, \ldots, n-k+1\}$ are consecutive composite numbers,  $r:=n-k$ the largest prime divisor of $n!$, and  $t$ the largest prime with $t \leq k$. Since $p \in \ze{\Gamma(G)}$,  then $p\leq t$ by  Lemma \ref{angraph}, and so $r>\frac{n}{2}>k\geq t\geq p$. 

We claim that $r\notin\pi(\ce{G}{P})$, for $P\in\syl{p}{G}$. Let suppose first that $p\neq 2$. Assume that there exists an element $x\in G$ of order $r$ such that $P\leq \ce{G}{x}$. Since $\ce{G}{x}$ is isomorphic to a subgroup of $C_r\times \Sigma_{n-r}$ and $p\neq r$, then $\abs{P}$ divides $\abs{\Sigma_{n-r}}$, and so $\abs{\Sigma_n:\Sigma_{n-r}}= n(n-1)\cdots (n-r+1)$ should be a $p'$-number. But this is a contradiction, since $p<r$.

If $p=k=2$, then, by Lemma \ref{angraph}, it should be $G=\Sigma_n$, so the above reasonings work as well. Finally, if $p=2$ and $k\geq 3$, then $r\geq 5$ and we can argue as above to get a contradiction since $\abs{\Sigma_n:\Sigma_{n-r}}= n(n-1)\cdots (n-r+1)$ is divisible by $4$.

If $n=6$, by Lemma~\ref{a6}, the only case to be considered is $G=\op{Aut}(N)$, and since  $\ze{\Gamma(G)}=\{2\}$ it should be $p=2$. But a Sylow $2$-subgroup of $G$ is self-centralising, so we get a contradiction. 
\end{proof}

\begin{lemma}\label{notsp}
$N$ is not an sporadic group.
\end{lemma}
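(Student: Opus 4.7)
The strategy parallels the proof of Lemma~\ref{notan} and rests on Lemma~\ref{cent} combined with Lemma~\ref{sporgraph}. By Lemma~\ref{cent}, $p \in \ze{\Gamma(G)}$, and in particular $\ze{\Gamma(G)} \neq \emptyset$; since a disconnected prime graph on more than one vertex admits no central vertex, $\Gamma(G)$ must be connected. But Lemma~\ref{sporgraph} asserts that $\Gamma(N)$ is disconnected for every sporadic simple group $N$, which rules out $G = N$. The same lemma shows that $\Gamma(\aut{N})$ is disconnected except precisely when $N = McL$ or $N = J_2$, so the only cases to address are $G = \aut{McL}$ and $G = \aut{J_2}$. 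In both, Lemma~\ref{sporgraph} gives $\ze{\Gamma(G)} = \{2\}$, forcing $p = 2$.

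In either of these two groups a Sylow $2$-subgroup $P$ has order $|P| = 2^8 = 256$. The second half of Lemma~\ref{cent} translates into the following concrete requirement: for every prime $q \in \pi(G)$ there must exist an element $z \in G$ of order $q$ with $P \leq \ce{G}{z}$. The plan is therefore to select, in each of the two cases, a prime $q \in \pi(G)$ for which $|\ce{G}{z}| < |P|$ for all $z$ of order $q$, obtaining the required contradiction.

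For $G = \aut{McL}$, take $q = 11$. From \cite{Atl}, the two $McL$-classes $11A$ and $11B$ each have centralizer of order $11$ in $McL$ and are swapped by the outer automorphism, so they fuse into a single $G$-class; a standard orbit-stabiliser computation then yields $|\ce{G}{z}| = 11$ for every element $z$ of order $11$ in $G$, and this cannot contain $P$. For $G = \aut{J_2}$, take $q = 7$: the unique $J_2$-class $7A$ of $7$-elements satisfies $|\ce{J_2}{z}| = 14$ and is invariant under the outer automorphism, so $|\ce{G}{z}| = 28 < 256 = |P|$. In both situations the inclusion $P \leq \ce{G}{z}$ fails, contradicting Lemma~\ref{cent}.

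The main obstacle, such as it is, is essentially bookkeeping: identifying the correct prime $q$ in each of the two exceptional cases and reading off the relevant centralizer orders from \cite{Atl}. No deeper group-theoretic argument is required beyond the framework already established by Lemmas~\ref{cent} and~\ref{sporgraph}.
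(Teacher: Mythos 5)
Your argument is correct and follows essentially the same route as the paper: both reduce, via Lemma~\ref{sporgraph} and the fact that $p\in\ze{\Gamma(G)}$, to the cases $N\in\{J_2, McL\}$ with $G=\aut{N}$ and $p=2$, and then contradict Lemma~\ref{cent} using the smallness of the centralisers of elements of order $7$ (resp.\ $11$) --- which the paper phrases as $N$ having a self-centralising Sylow $7$- (resp.\ $11$-) subgroup. The only slip is the Atlas value for $J_2$: an element $z$ of order $7$ satisfies $\abs{\ce{J_2}{z}}=7$, not $14$ (since $J_2$ has no element of order $14$), but this only strengthens your inequality and does not affect the proof.
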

\begin{proof}
Assume that $N$ is an sporadic group. Since $p \in \ze{\Gamma(G)}$, we may assume, by Lemma \ref{sporgraph}, that either $N= J_2$ or $N=McL$, $G=\aut{N}$, and $p=2$. Now Lemma \ref{cent} implies that $2$ is adjacent in $N$ to any prime $r\neq 2$, but this is a contradiction since $N$ has a self-centralising Sylow $s$-subgroup (take $s=7$ for $N=J_2$ and  $s=11$ for $N=McL$; see \cite{Atl}). 
\end{proof}

\begin{lemma}
$N$ is not a simple group of Lie type.
\end{lemma}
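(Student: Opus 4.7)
The plan is to combine Lemma~\ref{cent} with the structural information on maximal tori collected in Lemmas~\ref{class}, \ref{excep} and Tables~\ref{tori}--\ref{self}, arguing by contradiction. Assume $N = G(q)$ is a simple group of Lie type in characteristic $t$, and write $q = t^e$. Because $G/N \lessapprox \out{N}$, we have $\pi(G)\setminus\pi(G/N)\supseteq \pi(N)\setminus \pi(\out{N})$, and Lemma~\ref{cent} tells us that every prime in $\pi(N)\setminus\pi(\out{N})$ is adjacent to $p$ in $\Gamma(N)$. The strategy is to pick the primes $r,s$ from Lemmas~\ref{class}/\ref{excep}, both lying in $\pi(N)\setminus(\pi(\out{N})\cup\{t\})$, and derive that $p$ divides both $|T_1|$ and $|T_2|$, contradicting $(|T_1|,|T_2|)=1$.

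First I would dispose of the characteristic case $p=t$. By Lemma~\ref{cent}, $\pi(N)\subseteq \pi(\ce{G}{P})=\pi(C_N(P\cap N))\cup\pi(G/N)$, where $P\in\syl{p}{G}$, so in particular every prime $r\in\pi(N)\setminus\pi(\out{N})$ must divide the order of the centraliser in $N$ of a unipotent element. For $N=L_2(q)$ this contradicts the final assertion of Lemma~\ref{class} directly. For the remaining classical and exceptional cases, the primes $r,s$ from the tables are primitive prime divisors of $q^k-1$ for large $k$, so they cannot divide $|C_N(u)|$ for a non-trivial unipotent $u$ (the unipotent centralisers are extensions of $t$-groups by proper Levi factors whose semisimple part cannot accommodate such primitive divisors); the small exceptional groups $L_3(q)$, $U_3(q)$, and the entries in Table~\ref{self} (together with the Tits group) each admit a self-centralising Sylow $s$-subgroup of order $s$ with $s\neq t$, and the forced adjacency $sp\in \pi(\Gamma(N))$ from Lemma~\ref{cent} contradicts $C_N(x)$ being a pure $s$-group for non-trivial $s$-elements $x$.

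Next, assume $p\neq t$. By Lemma~\ref{cent}, the primes $r$ and $s$ of Lemmas~\ref{class}/\ref{excep} are both adjacent to $p$ in $\Gamma(N)$, so $N$ contains elements $u$ of order $pr$ and $v$ of order $ps$. Both $u$ and $v$ are semisimple (orders coprime to $t$), hence each is contained in a maximal torus. By the choice of $r$ as a suitable primitive prime divisor in the tables, an $r$-element of $N$ lies in a unique (up to conjugacy) maximal torus, namely a conjugate of $T_1$; therefore $p$ must divide $|T_1|$. The identical reasoning applied to $v$ and $s$ gives $p\mid |T_2|$. This contradicts $(|T_1|,|T_2|)=1$, completing the case. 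The special rows of Tables~\ref{tori}, \ref{tori2} (for $L_2$, $L_3$, $U_3$, the very small classical exceptions of Table~\ref{self}, and the Suzuki/Ree/Tits groups marked $(\star)$) are handled analogously, but invoking the self-centralising Sylow $s$-subgroups from Table~\ref{self} (or Lemma~\ref{excep} for $F_4(2)'$) in place of the torus pair: the adjacency of $p$ to $s$ in $\Gamma(N)$ forces $p$ to divide $|C_N(x)|=s$ for an $s$-element $x$, which is absurd since $p\neq s$.

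The main obstacle I expect is the characteristic case $p=t$: there the torus-based argument is vacuous, and the argument has to proceed through an inspection of unipotent centralisers to verify that the required primes $r$ (primitive prime divisors) are genuinely excluded from $\pi(C_N(u))$ for every non-trivial unipotent $u$. A secondary issue is bookkeeping for the finitely many genuinely sporadic parameters (the rows of Tables~\ref{tori} and \ref{tori2} with restrictions, the Tits group, and the low-rank groups $L_2,L_3,U_3$), each of which is handled case by case using either the self-centralising Sylow data of Table~\ref{self} or the explicit torus information in Lemma~\ref{class}.
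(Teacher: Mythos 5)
Your generic argument---pairing the adjacency statement of Lemma~\ref{cent} with the two coprime maximal tori of Lemmas~\ref{class} and \ref{excep} (or with a self-centralising Sylow $s$-subgroup of prime order) to force $p\in\pi(T_1)\cap\pi(T_2)$---is exactly the paper's argument, and your disposal of $p=t$ reaches the right conclusion, although the paper gets it in one line from the fact that a Sylow $t$-subgroup is self-centralising (so $\pi(C_G(P))=\{t\}\neq\pi(G)$), with no need to inspect unipotent centralisers. There is, however, a genuine gap in your treatment of the low-rank families. You claim that $L_2(q)$, $L_3(q)$ and $U_3(q)$ are ``handled analogously\ldots invoking the self-centralising Sylow $s$-subgroups from Table~\ref{self}'', but Table~\ref{self} contains no entry for $L_2(q)$, none for $U_3(q)$, and only $L_3(4)$ among the $L_3(q)$; for these families neither a pair of coprime tori with prime divisors outside $\pi(\out{N})$ nor a prime-order self-centralising Sylow subgroup is supplied by Lemma~\ref{class} (and for $L_2(q)$ with $q-1$ a power of $2$, say, no such pair of primes exists at all). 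So your argument does not close these cases.

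The missing idea is the paper's second preliminary step: from $G=AN=BN=AB$ one obtains $\abs{N}\abs{A\cap B}=\abs{G/N}\abs{N\cap A}\abs{N\cap B}$, and since $\abs{N}_t>\abs{\out{N}}_t$ this forces $t\in\pi((N\cap A)\cup(N\cap B))\subseteq\pi(C_N(P))$, i.e.\ $t$ is adjacent to $p$ in $\Gamma(N)$. Your reduction ``every prime in $\pi(N)\setminus\pi(\out{N})$ is adjacent to $p$'' does not capture $t$, since $t$ may well lie in $\pi(G/N)$, so the last assertion of Lemma~\ref{cent} does not apply to it. This forced adjacency is precisely what kills the remaining cases: for $L_2(q)$ the centraliser of every non-trivial $t$-element is a $t$-group, so $t$ is isolated in $\Gamma(N)$, a contradiction; for $L_3(q)$ and $U_3(q)$ the cyclic torus $T$ of order $(q^2+q+1)/d$ (resp.\ $(q^2-q+1)/d$) forces $p\in\pi(T)$, so $p$ is a primitive prime divisor of $q^3-1$ (resp.\ $q^6-1$) and hence not adjacent to $t$, again a contradiction. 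Without establishing that $t$ and $p$ are adjacent, your proof cannot be completed for these families.
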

\begin{proof}
If $N$ is a simple group of Lie type of characteristic $t$, first notice that the prime $p$ such that $\pi(G)=\pi(C_G(P))$  should be different from $t$, because it is well known that a Sylow $t$-subgroup is self-centralising in $G$. 
Moreover, since $\abs{N}\abs{A\cap B}=\abs{\frac{G}{N}}\abs{N\cap A}\abs{N\cap B}$ and $|N|_t > |\out{N}|_t$,  we get that $t \in \pi((N \cap A) \cup (N \cap B)) \subseteq \pi(C_{N}(P))$. This means that $t$ should be adjacent to $p$ in $\Gamma(N)$.

Now, we derive from Lemmas \ref{class} and \ref{excep} that, apart from some exceptional cases that we consider below, either there exist a Sylow $s$-subgroup of $N$ of order $s \not \in \pi(\out{N})$ which is self-centralising in $N$, or there exist two primes $r, s \in \pi (N)\setminus \pi(G/N)$, and two maximal tori $T_1$ and $T_2$ of $N$ such that $r \in \pi(T_1)$, $s\in \pi(T_2)$, and $(|T_1|, |T_2|)=1$. But from  Lemma \ref{cent}, $p$ is a prime which is adjacent both to $r$ and $s$ in $\Gamma(N)$, and therefore $p \in \pi(T_1) \cap \pi(T_2)$, which gives a contradiction. 

For $N=L_2(q)$, $q=t^e$,  the fact that $C_N(x)$ is a $t$-group for any $t$-element $x \in N$, implies that $t$ is not adjacent in  $\Gamma(N)$ to any other prime in $\pi(N)$, a contradiction.

If $N=L_3(q)$ or $N=U_3(q)$, $q=t^e$, the assertion in Lemma \ref{class} on the corresponding maximal torus $T$ in each case guarantees that $p \in \pi(T)$ is a primitive prime divisor of $q^3-1$ (respectively $q^6-1$) and $p$ is not adjacent to the prime $t$ in $\Gamma(N)$. In fact, $p$ is not adjacent in $\Gamma(N)$ to any prime $s \not \in \pi(T)$, which gives a contradiction. 
\end{proof}

The Main Theorem is proved.\\

\noindent \textbf{Acknowledgements.}  Research supported by  Proyecto PGC2018-096872-B-I00 from the Ministerio de Ciencia, Innovaci\'on y Universidades, Spain, and FEDER.  The second author is also supported  by Project VIP-008 of Yaroslavl P. Demidov State University and the third author by Proyecto PROMETEO/2017/057 from the Generalitat Valenciana, Spain.

\end{document}